\newcommand{\ii}{\mathbf i}
\newcommand{\sgn}{{\rm sgn}}
\renewcommand{\P}{\mathcal P}
\renewcommand{\Re}{\operatorname{Re}}
\newcommand{\eps}{\varepsilon}
\renewcommand{\S}{\mathfrak S}
\newcommand{\x}{\mathbf{X}}
\newcommand{\y}{\mathbf{Y}}
\newcommand{\z}{\mathbf{Z}}
\newcommand{\W}{\mathbb{W}}
\renewcommand{\H}{\mathcal H}
\newcommand{\T}{\mathcal T}
\newcommand{\C}{\mathbb{C}}
\newcommand{\Z}{\mathbb{Z}}
\newcommand{\R}{\mathbb{R}}
\newcommand{\PP}{\mathbb{P}}
\newcommand{\EE}{\mathbb{E}}
\newcommand{\Cr}{\mathcal {S}_\tau}
\newtheorem{theorem}{Theorem}[section]
\newtheorem*{theorem*}{Theorem}
\newtheorem{lemma}[theorem]{Lemma}
\newtheorem{definition}[theorem]{Definition}
\newtheorem{proposition}[theorem]{Proposition}
\newtheorem{corollary}[theorem]{Corollary}
\begin{document}
\begin{abstract}
 We study the asymmetric six-vertex  model in the quadrant with parameters on the stochastic
line. We show that the random height function of the model converges to an explicit deterministic
limit shape as the mesh size tends to $0$. We further prove that the one-point fluctuations around
the limit shape are asymptotically governed by the GUE Tracy--Widom distribution. We also explain
an equivalent formulation of our model as an interacting particle system, which can be viewed as a
discrete time generalization of ASEP started from the step initial condition. Our results confirm
an earlier prediction of Gwa and Spohn (1992) that this system belongs to the KPZ universality
class.
\end{abstract}

\title{Stochastic six-vertex model}
\author[A. Borodin]{Alexei Borodin}
\address{A. Borodin,
Massachusetts Institute of Technology,
Department of Mathematics,
77 Massachusetts Avenue, Cambridge, MA 02139-4307, USA, and
Institute for Information Transmission Problems, Bolshoy Karetny per. 19, Moscow 127994, Russia}
\email{borodin@math.mit.edu}

\author[I. Corwin]{Ivan Corwin}
\address{I. Corwin, Columbia University,
Department of Mathematics, 2990 Broadway, New York, NY 10027, USA, and Clay Mathematics Institute,
10 Memorial Blvd. Suite 902, Providence, RI 02903, USA, and Massachusetts Institute of Technology,
Department of Mathematics, 77 Massachusetts Avenue, Cambridge, MA 02139-4307, USA, and Institute
Henri Poincare, 11 Rue Pierre et Marie Curie, 75005 Paris, France.} \email{ivan.corwin@gmail.com}

\author[V. Gorin]{Vadim Gorin}
\address{V. Gorin,
Massachusetts Institute of Technology,
Department of Mathematics,
77 Massachusetts Avenue, Cambridge, MA 02139-4307, USA, and Institute for Information Transmission Problems, Bolshoy Karetny per. 19, Moscow 127994, Russia}
\email{vadicgor@gmail.com}

\maketitle

\setcounter{tocdepth}{3}
\tableofcontents
\hypersetup{linktocpage}

\section{Introduction}

In this article we study a stochastic system at the interface of equilibrium lattice models and
non-equilibrium interacting particle systems.

From the point of view of lattice models, we deal with the six--vertex (or ``square--ice'') model.
The configurations of the six--vertex model are assignments of one of 6 types of $H_2O$ molecules
shown in Figure \ref{Figure_six} to the vertices of (a subdomain of) square grid in such a way
that the $O$ atoms are at the vertices of the grid. To each $O$ atom there are two $H$ atoms
attached, so that they are at angles $90^\circ$ or $180^\circ$ to each other, along the grid
lines, and between any two adjacent $O$ atoms there is exactly one $H$. Figure
\ref{Figure_configuration_H2O} shows an example of a configuration.

\begin{figure}[h]
\begin{center}
  {\scalebox{0.6}{\includegraphics{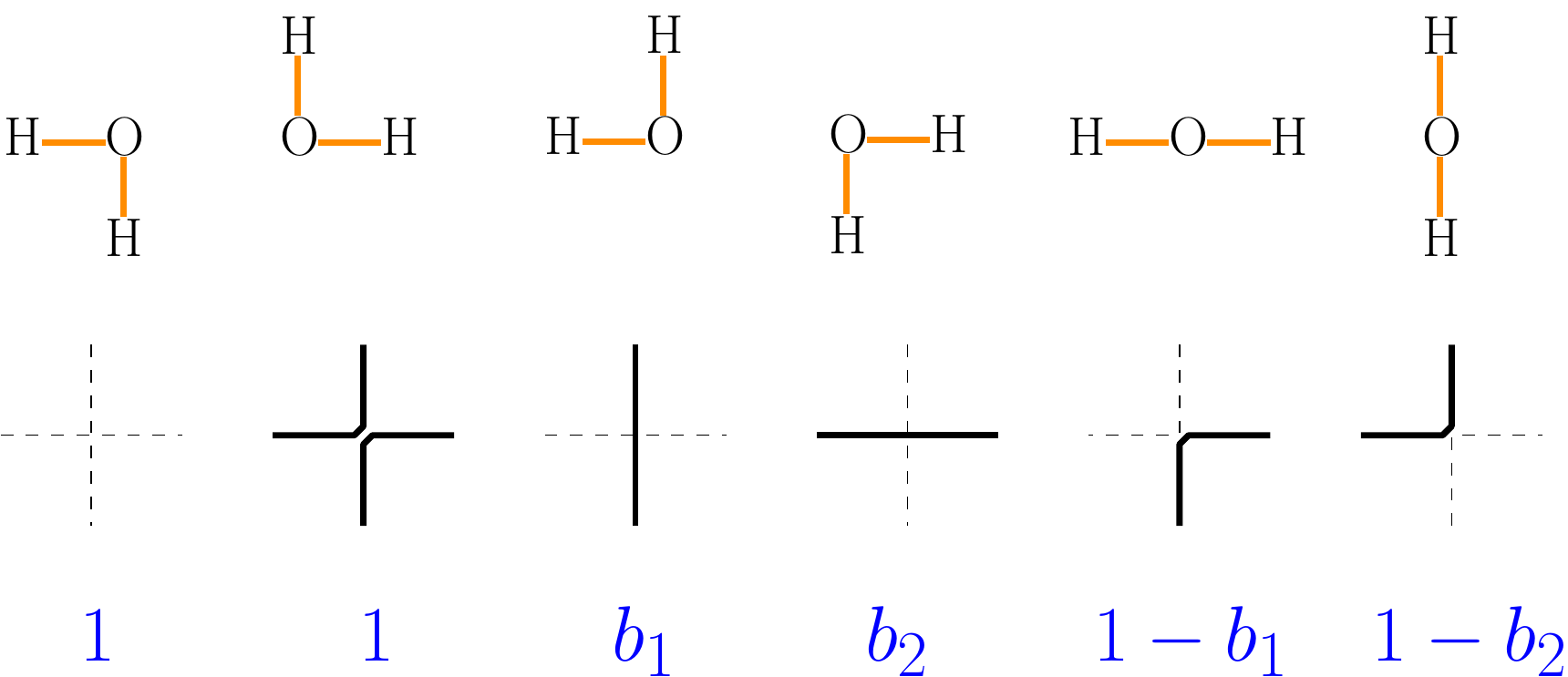}}}
 \end{center}
 \caption{Six types of vertices in $H_2O$ interpretation (top panel) and in lines interpretation (middle
 panel). Weights corresponding to the vertices for measure $\P(b_1,b_2)$ (bottom panel).
 \label{Figure_six}}
\end{figure}

\begin{figure}[h]
\begin{multicols}{2}
\begin{center}
  {\scalebox{0.43}{\includegraphics{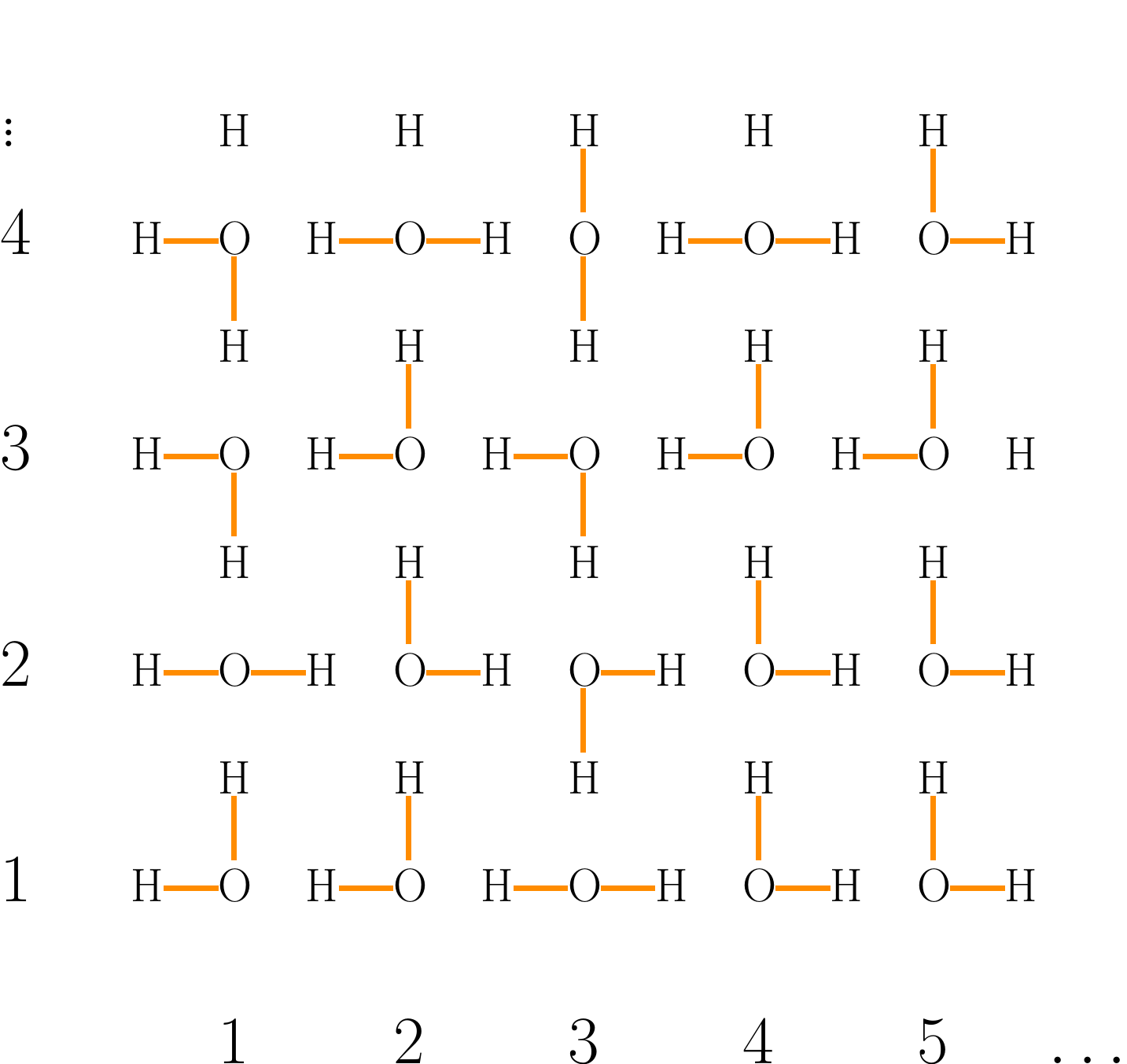}}}
 \end{center}
\columnbreak
\begin{center}
  {\scalebox{0.54}{\includegraphics{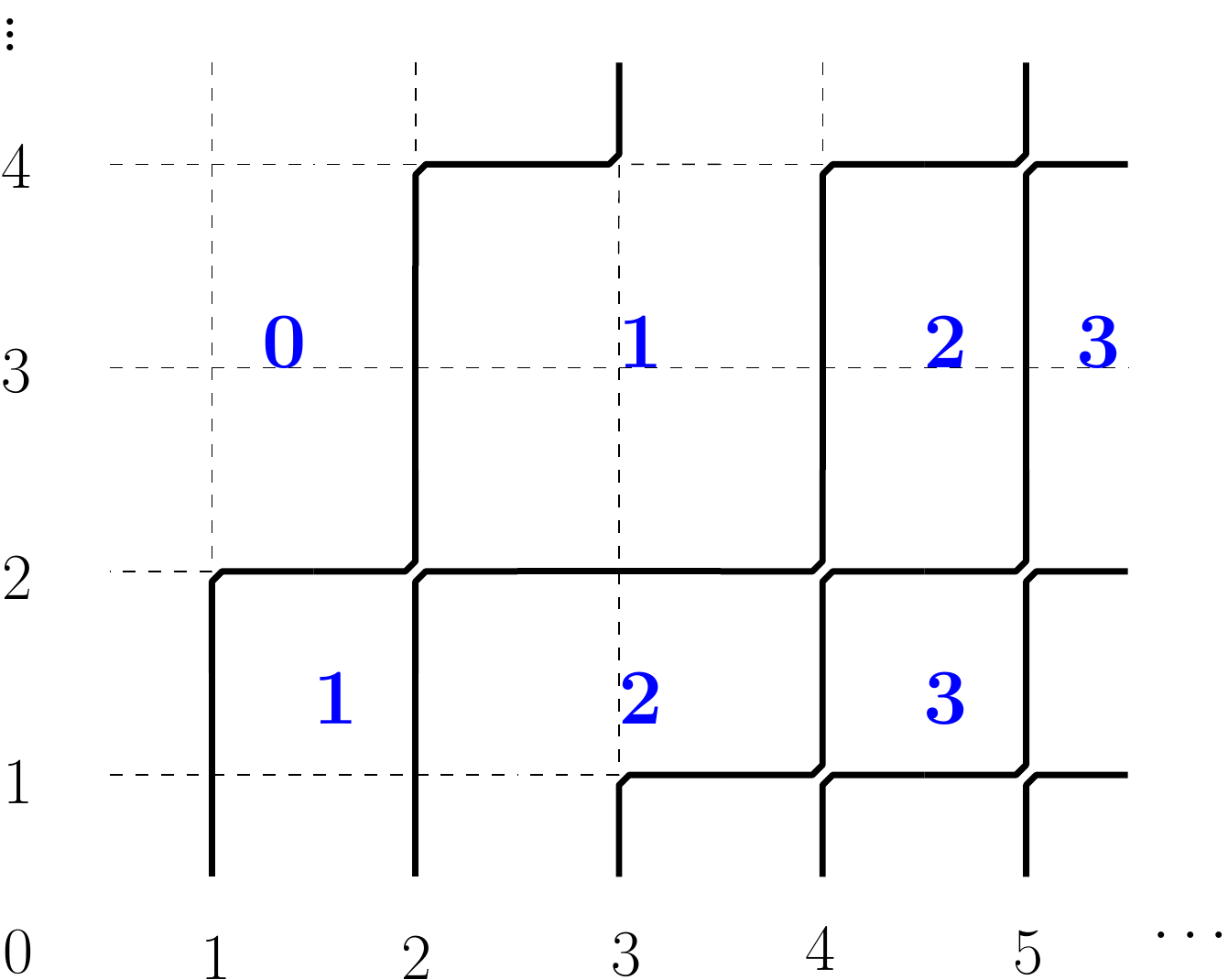}}}
 \end{center}

\end{multicols}

 \caption{Part of the configuration of the model in quadrant (left panel), corresponding line ensemble and height function (right panel).
 \label{Figure_configuration_H2O}}

\end{figure}

The six--vertex model is an important model of equilibrium statistical mechanics, being the
prototypical integrable lattice model in two dimensions. Its study has led to many exciting
developments during the last 50 years, see e.g.\ \cite{Baxter}, \cite{Resh_lectures} and
references therein. Our interest is probabilistic: we study random configurations and the
asymptotic distributions of the quantities describing them.

In the present paper we are concerned with the model in a quadrant, i.e.\ on the grid
$\mathbb{Z}_{>0}\times\mathbb{Z}_{>0}$, and with specific boundary conditions: $H$ atoms on the
left boundary and alternating $H$ and $O$ atoms on the bottom boundary, as shown in Figure
\ref{Figure_configuration_H2O}. Such boundary conditions can be viewed as an infinite analogue of
the well-studied domain--wall boundary conditions, cf.\ \cite{Bressoud}, \cite{Zinn_thesis},
\cite{Gier}, \cite[Introduction]{BFZ} for reviews of many recent results about the latter.

There is no canonical (e.g.\ uniform) measure on the configurations in the quadrant. Informally,
we are working with a special class of the \emph{asymmetric} six-vertex models whose parameters
fall on what is sometimes called the stochastic line, cf.\ \cite{GwaSpohn}, \cite{ADW},
\cite{Kim}, \cite{PS}. More formally, we introduce and study the family of measures $\P(b_1,b_2)$
depending on two parameters $0<b_1<1$ and $0<b_2<1$. The measure $\P(b_1,b_2)$ can be defined
through the following stochastic sampling algorithm. The types of vertices of
$\P(b_1,b_2)$--random configuration $\omega$ are chosen sequentially: we start from the corner
vertex at $(1,1)$, then proceed to $(1,2)$ and $(2,1)$,\dots, then proceed to all vertices $(x,y)$
with $x+y=k$, then with $x+y=k+1$, etc. The combinatorics of the model implies that when we choose
the type of the vertex $(x,y)$, then either it is uniquely determined by the types of its
previously chosen neighbors, or we need to choose between vertices of types number $3$ and number
$5$ at Figure \ref{Figure_six}, or we need to choose between vertices of types $4$ and $6$. We do
all the choices independently and choose type $3$ with probability $b_1$ and type $5$ with
probability $1-b_1$. Similarly we choose type $4$ with probability $b_2$ and type $6$ with
probability $1-b_2$. All these probabilities are recorded in Figure \ref{Figure_configuration_H2O}
and we refer to Section \ref{Section_P_as_Gibbs} for a more detailed description of $\P(b_1,b_2)$.

Alternatively, the measure $\P(b_1,b_2)$ can be obtained as a limit of \emph{Gibbs measures} in
certain finite domains approximating the quadrant, see Section \ref{Section_P_as_Gibbs} for the
exact statement. Furthermore, below and also in Section \ref{Section_P_as_interacting} we explain
that $\P(b_1,b_2)$--random configurations can be identified with time evolution of a certain
\emph{interacting particle system}.

In order to state our results we need to introduce the \emph{height function} of a model
configuration. For that we use a known identification of the six--vertex model with an ensemble of
paths, which is obtained by replacing the 6 types of $H_2O$ molecules with corresponding $6$ types
of local line configurations as shown in Figure \ref{Figure_six}. The result is an ensemble of
paths in the quadrant: all paths start at the bottom boundary and follow the up--right directions,
corners of the paths are allowed to touch, but paths never intersect each other, see Figure
\ref{Figure_configuration_H2O} for an example. These paths can be viewed as level lines of a
certain function. More precisely, for a configuration $\omega$ we define its height function
$H(x,y;\omega)$, $x,y\in \mathbb{R}_{>0}$, (here $x$ is the horizontal coordinate) by setting
$H(0,y;\omega)=0$ and declaring that when we cross a path, $H(x,y;\omega)$ increases by $1$, as
shown in Figure \ref{Figure_configuration_H2O}. A formal way of doing that is to define
$H(x,y;\omega)$ as the number of vertices of types $2$, $3$ and $5$ (non-strictly) to the left
from point $(x, \lceil y \rceil)$.

Our first result is the \emph{Law of Large Numbers} for the height function.

\begin{theorem} \label{Theorem_LLN} Assume that $0<b_2<b_1<1$ and let $\omega$ be a $\P(b_1,b_2)$--distributed random
configuration. Then for any $x,y>0$ the following convergence in probability holds
$$
 \lim_{L\to+\infty} \frac{H(Lx,Ly;\omega)}{L}=\H(x,y),
$$
where
$$
  \H (x,y)=\begin{dcases}
  \dfrac{\left(\sqrt{y(1-b_1)}-\sqrt{x(1-b_2)}
  \right)^2}{b_1-b_2},&\dfrac{1-b_1}{1-b_2}< \dfrac{x}{y} < \dfrac{1-b_2}{1-b_1},\\
  0,& \dfrac{x}{y}\le \dfrac{1-b_1}{1-b_2},\\
  x-y, & \dfrac{x}{y}\ge \dfrac{1-b_2}{1-b_1}.
  \end{dcases}
$$
\end{theorem}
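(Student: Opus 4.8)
The plan is to reduce the limit shape to a single sharp exponential estimate on an integrable observable of the one-point marginal of the height function. The stochastic six-vertex model is exactly solvable: as a function of $y$ the height function $H(\cdot,y;\omega)$ evolves as a Markov chain (the associated interacting particle system), and its Yang--Baxter structure yields --- via a Markov duality, or equivalently by a limit transition from the moment formulas known for ASEP or $q$-TASEP --- closed expressions for the $\tau$-moments $\E\bigl[\tau^{k H(x,y;\omega)}\bigr]$, $k\ge 1$, and for their generating function, the $\tau$-Laplace transform $\E\bigl[1/(\zeta\,\tau^{H(x,y;\omega)};\tau)_\infty\bigr]$, as nested contour integrals and a Fredholm determinant respectively; here $\tau\in(0,1)$ is the parameter of the model built from $(b_1,b_2)$. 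Because $\tau^{H}$ takes values in $(0,1]$, the exponential order of these quantities determines $\H(x,y)$. (A softer hydrodynamic-limit approach --- showing $H$ converges to the viscosity solution of a Hamilton--Jacobi equation whose flux is read off from the product-Bernoulli stationary measures of the particle system --- is conceivable, but the exact-formula route is more direct and, moreover, the same formulas are needed for the fluctuation theorem.)

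The analytic core is a steepest descent analysis. With $x=Lx_0$, $y=Ly_0$ and the integration variable rescaled, the integrand of the $\tau$-moment takes the form $e^{L\,\Phi_{x_0,y_0}(z)}$ times factors of lower exponential order, where $\Phi_{x_0,y_0}$ is an explicit combination of logarithms with coefficients proportional to $x_0$ and $y_0$. Its critical points solve a quadratic equation in $z$, and solving it is precisely what produces the square root $\sqrt{y(1-b_1)}-\sqrt{x(1-b_2)}$ in the statement. The three cases correspond to the position of the relevant critical point relative to the poles of the integrand. For $\tfrac{1-b_1}{1-b_2}<\tfrac{x_0}{y_0}<\tfrac{1-b_2}{1-b_1}$ there is a conjugate pair of simple saddles $z_c,\bar z_c$ through which the contour may be deformed, whence $\tfrac{1}{L}\log\E\bigl[\tau^{H(Lx,Ly;\omega)}\bigr]\to\Re\,\Phi_{x_0,y_0}(z_c)$, a quantity a direct computation identifies with $\H(x,y)\log\tau$. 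At the two threshold ratios the conjugate saddles coalesce on the real axis while simultaneously colliding with a singularity of the integrand; past them the saddle is no longer admissible, the dominant contribution comes instead from a residue, and the growth rate collapses to the trivial values $0$ and $(x_0-y_0)\log\tau$. In every case one obtains $\tfrac{1}{L}\log\E\bigl[\tau^{H(Lx,Ly;\omega)}\bigr]\longrightarrow \H(x,y)\log\tau$.

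From this estimate the theorem follows by soft steps. Applied to the $(0,1]$-valued random variable $\tau^{H(Lx,Ly;\omega)}$, Markov's inequality converts the estimate into the lower-tail bound $\Pr\bigl(H(Lx,Ly;\omega)\le L(\H(x,y)-\eps)\bigr)\to 0$ for each $\eps>0$. For the matching upper-tail bound $\Pr\bigl(H(Lx,Ly;\omega)\ge L(\H(x,y)+\eps)\bigr)\to 0$ I would either run the same steepest descent on the $\tau$-Laplace transform --- with $\zeta$ scaled appropriately its asymptotics yield convergence of the whole one-point distribution function, hence both tails at once, and its kernel develops a double critical point inside the liquid region, which is the mechanism behind the Tracy--Widom fluctuations proved later --- or invoke a particle--hole/reflection symmetry of the six-vertex model that trades $H$ for a complementary height function, so that the already-proved lower-tail bound for that function is exactly the desired upper-tail bound for $H$; the presence of the two frozen values $0$ and $x-y$ on the two sides of the liquid region is precisely the trace of such a symmetry. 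The two bounds together give convergence in probability.

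The main obstacle is the steepest descent itself: one must produce, uniformly over the whole relevant range of $x_0/y_0$, contours along which $\Re\,\Phi_{x_0,y_0}$ is maximized exactly at the saddle (or at the governing pole) while respecting the nesting forced by the poles of the integrand, and the picture changes qualitatively at the two threshold ratios, where a careful local analysis of the coalescing saddles is required. If one argues through the higher $\tau$-moments rather than through the Laplace transform, one must in addition control the interaction factors $\prod_{a<b}\frac{z_a-z_b}{z_a-\tau z_b}$ that couple the variables. Finally, establishing the contour-integral formula in the first place, if it is not taken as an input, is a substantial (by now structurally standard) preliminary.
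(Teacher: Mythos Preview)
Your proposal is correct in outline and overlaps substantially with the paper's argument, but differs in two places worth noting. First, the paper does not run a separate LLN-level steepest descent on the moments $\E[\tau^{kH}]$; instead it goes directly to the Fredholm determinant for the $\tau$-Laplace transform (your option (a) for the upper tail) and proves the full Tracy--Widom fluctuation statement (Theorem~\ref{Theorem_fluctuations}); the LLN in the liquid region then drops out as an immediate corollary, with both tails at once. Second, for the frozen regions the paper does \emph{not} push the steepest descent past the threshold ratios and track the saddle colliding with a pole, as you propose. It uses a much softer sandwiching argument: monotonicity of $H$ in $x$ together with the deterministic bounds $H\ge 0$ and $H(x,y;\omega)\ge x-y-2$, plus continuity of the limit shape $\H$ at the edges of the liquid region, squeeze the height function to the claimed constants. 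Your route would work but is more labor than needed. A smaller point: you suggest the moment formulas come from Markov duality or by limit transition from ASEP/$q$-TASEP, but the paper explicitly says it does not know how to run duality for this model; it obtains the formulas by a different chain (coordinate Bethe ansatz $\to$ contour-integral transition probabilities $\to$ Tracy--Widom-style summation over particle index $\to$ moments), which is the ``substantial preliminary'' you allude to at the end.
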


We remark that for the six--vertex model in a finite (but growing) domain with fixed boundary
conditions, there is a general approach to the laws of large numbers similar to Theorem
\ref{Theorem_LLN} through the associated \emph{variational problem}, as is briefly outlined in
\cite{PR}. However, as far as the authors know, mathematically the variational approach has not
been yet developed to the point where it could produce a rigorous proof of Theorem
\ref{Theorem_LLN}; we use completely different methods to prove this theorem. Note that in a
related context of random height function arising from tilings and dimer models, the variational
approach is much more developed, see \cite{CKP}, \cite{KOS}, \cite{KO}. We will revisit the idea
of variational problems later from a point of view of interacting particle systems.

Our next result identifies the asymptotic fluctuations of the height function. Observe that the
limit shape $\H(x,y)$ is curved in the sector $\frac{1-b_1}{1-b_2}< \frac{x}{y}<
\frac{1-b_2}{1-b_2}$ and flat outside it (as $b_1\to b_2$ the curved region disappears).  The
regions where $\H(x,y)$ is flat are typically called \emph{frozen regions}, and one expects the
fluctuations to be exponentially small there. In the curved (also called ``liquid'') region the
situation is different.

\begin{theorem} \label{Theorem_fluctuations} Assume that $0<b_2<b_1<1$ and let $\omega$ be $\P(b_1,b_2)$--distributed random
configuration.  For any $x,y$ such that $\frac{1-b_1}{1-b_2}< \frac{x}{y}< \frac{1-b_2}{1-b_2}$,
and any  $s\in \R$ we have
$$
\lim_{L\to \infty} \PP\left(\frac{\H(x,y)L - H(Lx, Ly; \omega)}{\sigma_{x,y} L^{1/3}}\le s\right)
= F_{{\rm GUE}}(s),
$$
where
\begin{equation}
\label{eq_scaling_def_intro}
 \sigma_{x,y} =
\kappa^{-1/3}\frac{x^{1/6} y^{1/6} }{\kappa^{-1/2}-\kappa^{1/2}} \left(1-\sqrt{\kappa
x/y}\right)^{2/3}\left(1-\sqrt{ \kappa y/x}\right)^{2/3},\quad\quad \kappa=\frac{1-b_1}{1-b_2},
\end{equation}
  and
$F_{{\rm GUE}}$ is the GUE Tracy-Widom distribution.
\end{theorem}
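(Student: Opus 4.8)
The plan is to reduce Theorem~\ref{Theorem_fluctuations} to an asymptotic analysis of a
Fredholm-determinantal formula for the distribution of $H(Lx,Ly;\omega)$, building on the
integrable structure of $\P(b_1,b_2)$. The first step is to exploit the identification of
$\P(b_1,b_2)$ with an interacting particle system (the discrete-time ASEP-type dynamics mentioned
in the introduction and developed in Section~\ref{Section_P_as_interacting}). In that language,
the event $\{H(Lx,Ly;\omega)\ge m\}$ becomes a statement about the location of a tagged particle,
so it suffices to compute the distribution of the position of the $m$-th particle at time $Ly$
when started from the step initial condition. I would then appeal to the Markov duality / Bethe
ansatz integrability of the stochastic six-vertex model to write down an exact contour-integral
formula for a suitable $q$-deformed observable --- concretely, an expression of the form
$\E\!\left[ \tfrac{1}{(\zeta q^{X_m(\cdot)};q)_\infty} \right]$ as a Fredholm determinant
$\det(I+K)_{L^2(C)}$ with an explicit kernel $K$ built out of the ratio of the two
``dynamical'' parameters $b_1,b_2$; here $q$ is the stochastic-six-vertex anisotropy parameter
(so that $b_1,b_2$ are expressed through $q$ and one spectral variable).

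The second step is the steepest-descent asymptotics of this Fredholm determinant. I would rescale
$m = \H(x,y)L + s\,\sigma_{x,y} L^{1/3}$ and $(x,y)\to(Lx,Ly)$, look for the critical point of the
exponential part of the kernel, and verify that in the liquid region
$\frac{1-b_1}{1-b_2}<\frac xy<\frac{1-b_2}{1-b_1}$ this critical point is a genuine double
(coalescing) critical point --- this is exactly what forces the $L^{1/3}$ scaling and the Airy
behavior. A Taylor expansion to third order at the double critical point produces the cubic that
defines the Airy kernel; matching the coefficient of the cubic term with the coefficient of $s$
yields the constant $\sigma_{x,y}$ in \eqref{eq_scaling_def_intro}, and the first-order term
vanishing is precisely the condition that pins down the limit shape value $\H(x,y)$ from
Theorem~\ref{Theorem_LLN}. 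Standard arguments (deforming contours to pass through the critical
point, controlling the tails, and showing the kernel converges in trace norm to the Airy kernel)
then give convergence of the Fredholm determinant to the Airy-kernel Fredholm determinant, i.e.\
to $F_{\mathrm{GUE}}$.

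The third step is the standard ``de-Poissonization'' or ``unfreezing'' argument that converts
convergence of the $q$-Laplace-type observable into convergence of the distribution function
itself. Because $q\in(0,1)$ is fixed, the function $x\mapsto \tfrac{1}{(\zeta q^{x};q)_\infty}$
is a smooth approximation to an indicator: as $\zeta\to\infty$ along a suitable ray it localizes
near a threshold. One then uses the limit-shape concentration (Theorem~\ref{Theorem_LLN}) together
with monotonicity of the height function in the initial data to upgrade the convergence of
$\E[\,\cdot\,]$ to the convergence of $\PP(\cdot \le s)$, for every continuity point $s$ of the
limit --- and $F_{\mathrm{GUE}}$ is continuous. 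This step is routine once the Fredholm determinant
asymptotics are in hand, but it does require a priori one-point tail bounds, which can be extracted
either from the same formula or from the law of large numbers.

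The main obstacle, and the technical heart of the argument, is the steepest-descent analysis in
Step~2: locating the double critical point explicitly, justifying the deformation of the Fredholm
contours so that the exponential weight decays away from that point (this requires a careful global
analysis of the level sets of the real part of the phase function, which can be delicate in the
six-vertex setting because the phase involves logarithms with several branch points depending on
$b_1,b_2$), and controlling the resulting kernel uniformly to obtain trace-norm convergence. A
secondary difficulty is deriving the exact Fredholm determinant formula in Step~1 with the right
generality --- in particular making sure the step-initial-condition specialization and the
associated contours are admissible --- but this should follow from the Markov duality for the
stochastic six-vertex model combined with the Macdonald/$q$-Whittaker-type machinery already
available for such models.
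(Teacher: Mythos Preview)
Your overall architecture---particle-system reformulation, exact Fredholm determinant for a $q$-Laplace observable, steepest descent to the Airy kernel, then conversion to distributional convergence---matches the paper's. But two points deserve comment.

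First, in Step~1 you propose to track the position $X_m$ of the $m$-th particle and to derive the Fredholm determinant via Markov duality for the stochastic six-vertex model. The paper does neither. It works with the dual observable $N_x$ (the number of particles weakly left of $x$), and it explicitly states that neither Macdonald difference operators nor Markov duality are known to apply here (beginning of Section~4.4). Instead the paper takes a longer route: Bethe-ansatz eigenfunctions of the transfer matrix (Theorem~\ref{theorem_eigenrelation}) give contour-integral transition probabilities (Theorem~\ref{Theorem_eigentransfer}); Tracy--Widom-style contour manipulations and symmetrization identities extract the one-particle marginal (Theorem~\ref{theorem_particle_distribution}); summation over $m$ yields the $\tau$-moments of $N_x$ (Theorem~\ref{Theorem_observable_moment}); and only then does the Borodin--Corwin machinery produce the Fredholm determinant (Theorem~\ref{Theorem_observable_fredholm}). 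So your Step~1 as written has a gap: you are invoking duality that, within the framework of this paper, is not available. If you can supply an independent duality argument, your route would be shorter; otherwise you must fill in the Tracy--Widom-to-moments bridge, which is the technical heart of Sections~3--4.

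Second, your Step~3 is more elaborate than what is actually needed. The paper's Lemma~\ref{Lemma_observable_to_probability} converts convergence of $\E[f_L(X_L)]$ to weak convergence of $X_L$ using only that the $f_L$ approximate an indicator and that the limit $p(r)$ is a continuous distribution function. No appeal to Theorem~\ref{Theorem_LLN}, no monotonicity in initial data, and no a~priori tail bounds are required; continuity of $F_{\mathrm{GUE}}$ suffices. Your proposed argument would work but is unnecessary here.
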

\noindent{\bf Remark.}
 The symmetry $\sigma_{x,y}=\sigma_{y,x}$ can be traced to the fact that the measures
 $\P(b_1,b_2)$ are invariant under the involution which reflects the configuration
 by the $x=y$ line and then swaps the types of vertices in pairs $1\leftrightarrow 2$, $3\leftrightarrow
 4$, $5\leftrightarrow 6$. Another feature of \eqref{eq_scaling_def_intro} is that if we formally
 replace $\kappa$ by $\kappa^{-1}$ in the formula, then $\sigma_{x,y}$ changes its sign; we do not
 have any good explanations for the latter property.

\medskip

We recall that $F_{{\rm GUE}}$ is the limiting distribution for the largest eigenvalue of the
random Hermitian matrices (as the size of the matrix tends to infinity) from the \emph{Gaussian
Unitary Ensemble}, see \cite{TW-U}. One way to compute the distribution function $F_{{\rm
GUE}}(s)$ is through the Fredholm determinant expression:
$$
 F_{{\rm
GUE}}(s)=\det(I-A)_{L_2(s,+\infty)},
$$
where $A$ is an integral operator on $L_2(s,+\infty)$ with kernel expressed though the Airy
function via
$$
 A(x,y)=\frac{Ai(x)Ai'(y)-Ai(y)Ai'(x)}{x-y}.
$$

Theorem \ref{Theorem_fluctuations} is not the first instance of the appearance of a distribution
of the random matrix origin in the asymptotics of the six--vertex model. There is a class of
measures on configurations of the six--vertex model, called the \emph{free fermion point} of the
model, which can be analyzed via techniques of \emph{determinantal point processes}. In
particular, for the six--vertex model in $N\times N$ square with domain--wall boundary conditions
the study of these free fermion models is the same as the study of random \emph{domino tilings} of
the Aztec diamond, see \cite{EKLP}, \cite{Ku}, \cite{FS} for the details. Uniformly random tilings
of the Aztec diamond are known to posses random matrix asymptotic behavior, see \cite{J_Airy},
\cite{JN}. However, we do not know any direct relation of our measures $\P(b_1,b_2)$ with free
fermion point or determinantal point processes. Outside free fermions, the only connection to
random matrices that we are aware of, is the results of \cite{GP},\cite{G_ASM} for the six--vertex
model with domain--wall boundary conditions.

While the appearance of the random matrix type distribution in the six--vertex model is
anticipated, the exact form of Theorem \ref{Theorem_fluctuations} is a bit unexpected from the
point of view of statistical mechanics models with height functions. Indeed, in many related
models the asymptotic fluctuations of the height function are Gaussian and, more precisely, are
governed by the Gaussian Free Field, cf.\ \cite{Kenyon}, \cite{BF}, \cite{Petrov}, \cite{B-CLT},
\cite{BG_GFF}. Theorem \ref{Theorem_fluctuations} can probably be better understood from the point
of view of \emph{interacting particle systems} which we now present.

The connection of the six--vertex model to an interacting particle system with local interactions
was first noticed in \cite{GwaSpohn}. In order to see it in our context, we need to break the
symmetry between $x$ and $y$ coordinates. Consider $\P(b_1,b_2)$--distributed random configuration
and cut it by horizontal lines $y=t+1/2$, $t=0,1,2,\dots$ as shown in Figure
\ref{Figure_particles_intro}. The intersection of the horizontals with bold lines of the
configurations (that is, level lines of the height function) produce \emph{particle configuration}
$\x^{b_1,b_2}(t)=(x_1(t)<x_2(t)<\dots)$.

\begin{figure}[h]
\begin{center}
 {\scalebox{0.8}{\includegraphics{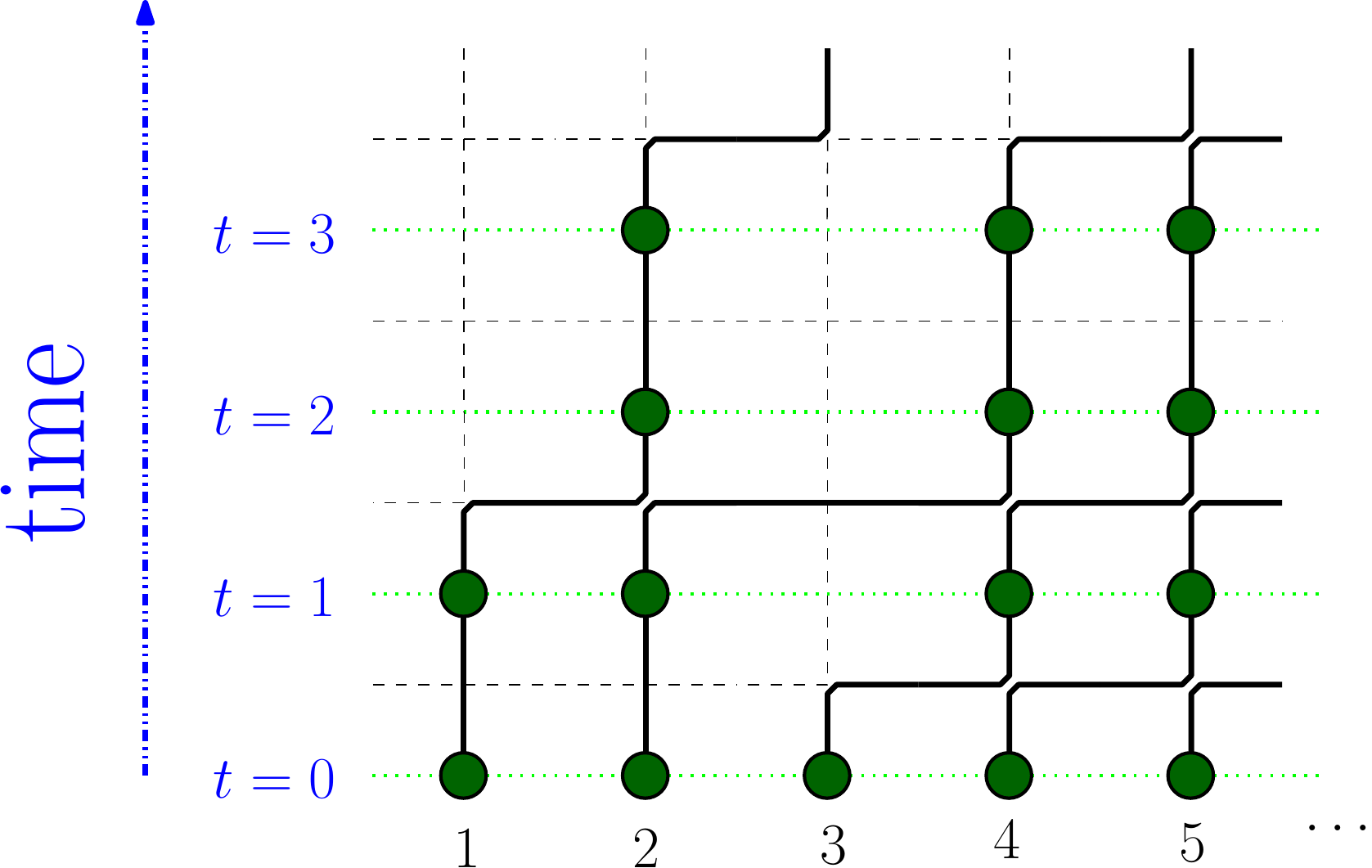}}}
 \caption{$5\times 4$ part of a configuration of the six--vertex model in the quadrant and
 corresponding particle system.
 \label{Figure_particles_intro}}
\end{center}
\end{figure}

The definition of the measure $\P(b_1,b_2)$ readily implies that $\x^{b_1,b_2}(t)$, $t=0,1,\dots$,
is a Markov chain. Moreover, this Markov chain has local update rules, as we explain in more
details in Section \ref{Section_P_as_interacting}.

From the point of view of $\x^{b_1,b_2}(t)$, Theorem \ref{Theorem_LLN} becomes the law of large
numbers for the large time \emph{current} of an interacting particle system. Thus, this result may
also be derivable through hydrodynamic theory as the solution to a one-dimensional variational
problem (cf.\ \cite{Spohn_book}, \cite{Rost}, \cite{Sep}, \cite{Rez}, \cite{Spohn_KPZ}) or as the
solution to a Hamilton-Jacobi PDE with flux which was computed in \cite{GwaSpohn}. The curved or
liquid part of the limit shape $\H(x,y)$ in the interacting particle systems language is known as
the \emph{rarefaction fan}. Further, the features of Theorem \ref{Theorem_fluctuations}, i.e.\ the
scaling exponent $L^{1/3}$ and the appearance of $F_{GUE}$ are a manifestation of the fact that
$\x^{b_1,b_2}(t)$ belongs to the \emph{KPZ universality class}, see \cite{Corwin} for a recent
review of KPZ universality. Note that the $L^{1/3}$ exponent was first predicted in this model by
\cite{GwaSpohn} based on the computation of the mixing time for the same interacting particle
system on a cylinder. Both $L^{1/3}$ exponent and the limiting distribution $F_{GUE}$ were found
before in several other interacting particle systems starting from the work \cite{J_TASEP} on the
Totally Asymmetric Simple Exclusion Process (TASEP). The latter (as well as other systems, see
\cite{FS2} for a survey) can be analyzed essentially using the techniques of determinantal point
processes. More recently and using a somewhat different set of techniques, similar results were
obtained for more complicated systems including Asymmetric Simple Exclusion Process (ASEP)
\cite{TW_review}, \cite{BCS}, $q$--TASEP \cite{BigMac}, \cite{F_qT}, and the solution to KPZ
stochastic partial differential equation \cite{ACQ}, \cite{SS}, \cite{Do}, \cite{CDR}, \cite{BCF};
see also \cite{BG_lect}, \cite{BP_lect}, \cite{Cor_ICM} for reviews.

The process $\x^{b_1,b_2}(t)$ has a limit to ASEP when $b_1\to 0$ with $b_2/b_1$ fixed, see
Section \ref{Section_P_as_interacting} for more details. Further limits include TASEP ($b_2=0$,
$b_1\to 0$) and KPZ stochastic partial differential equation (see \cite{KPZ}, \cite{BeGi},
\cite{ACQ}).

The relation to the KPZ universality class leads to predictions on \emph{multipoint} fluctuations
of the height function $H(x,y;\omega)$. For instance, we expect (but the proof is out of reach
with our present techniques) that if we fix $x,y$ satisfying $\frac{1-b_1}{1-b_2}< \frac{x}{y}<
\frac{1-b_2}{1-b_2}$, and vary $\zeta\in\mathbb R$, then the (proper centered and scaled) random
process $H(Lx+L^{2/3}\zeta, Ly;\omega)$ converges in distribution to the Airy$_2$ process (i.e.\
the top line of the Airy line ensemble). We refer to
\cite{PS_PNG},\cite{J_PNG},\cite{J_Airy},\cite{BF-Push} for similar statements for the interacting
particle systems related to the determinantal point processes.

\bigskip

The techniques we use to prove Theorems \ref{Theorem_LLN} and \ref{Theorem_fluctuations} also rely
on the interacting particle system interpretation. We start by finding the eigenfunctions of the
matrix of transitional probabilities of $\x^{b_1,b_2}(t)$ (when there are only finitely many
particles). Similar computations go back to the seminal work \cite{Lieb} on the six--vertex model
in 60s. The eigenrelation leads to contour integral formulas for the transitional probabilities of
$\x^{b_1,b_2}(t)$ which turns out to be very similar to those of the ASEP. This allows us to use
techniques developed in \cite{TW_determ} to find certain marginals of the distribution of
$\x^{b_1,b_2}(t)$. We further manipulate these expressions and use techniques developed in
\cite{BigMac}, \cite{BCS} (for the analysis of $q$--TASEP and certain directed polymers) to obtain
a Fredholm determinant formula for the $q$--Laplace transform of one-point distribution for the
height function $H(x,y;\omega)$. A steepest descent analysis of the kernel for this Fredholm
determinant ultimately leads to the results of Theorems \ref{Theorem_LLN} and
\ref{Theorem_fluctuations}.

\bigskip

\noindent{\bf Acknowledgements.} We would like to thank H.~Spohn for useful discussions. A.B.\ was
partially supported by the NSF grant DMS-1056390. I.C.\ was partially supported by the NSF through
DMS-1208998 as well as by Microsoft Research and MIT through the Schramm Memorial Fellowship, by
the Clay Mathematics Institute through the Clay Research Fellowship, and by the Institute Henri
Poincare through the Poincare Chair. V.~G.\ was partially supported by the NSF grant DMS-1407562.

\section{Two formulations of the model}

\label{Section_P}

Let us start by giving a formal definition of our main subject of study --- the measure
$\P(b_1,b_2)$. For convenience we stick to the bold lines interpretation of the six--vertex model,
the $H_2O$ molecules picture can be always restored through the bijection of Figure
\ref{Figure_six}.

The configuration of the model is an assignment of the pictures of 6 kinds shown in Figure
\ref{Figure_weights} to the vertices of the grid $\Z_{>0}\times \Z_{>0}$ subject to three
restrictions:
\begin{figure}[h]
\begin{center}
 {\scalebox{0.8}{\includegraphics{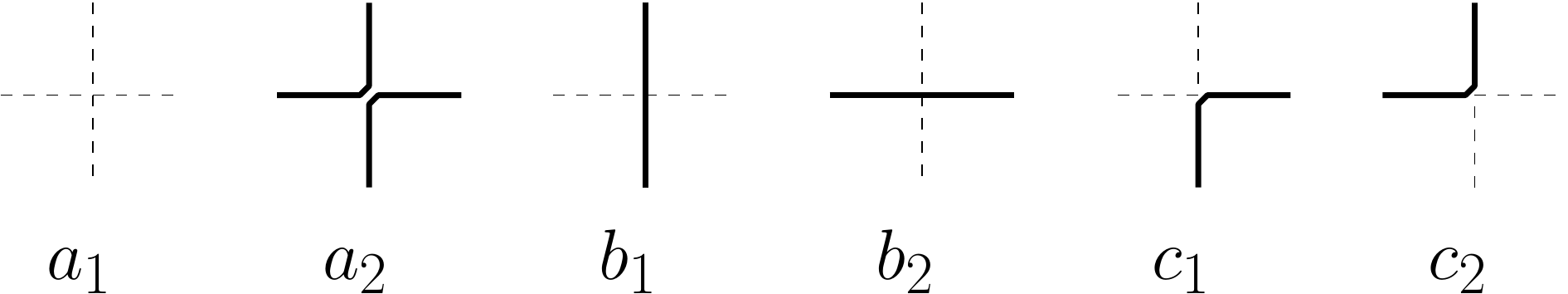}}}
 \caption{Six types of vertices, corresponding lines and weights.
 \label{Figure_weights}}
\end{center}
\end{figure}
\begin{itemize}
\item The pictures at any two adjacent sites agree in the sense that the bold lines keep flowing.
E.g. if a picture at position $(x,y)$ has a bold line in west direction, then the picture at
position $(x-1,y)$ should have a bold line in east direction.
\item Pictures at positions $(x,1)$ for $x>0$ have bold lines in south direction.
\item Pictures at positions $(1,y)$ for $y>0$ do not have bold lines in west direction.
\end{itemize}


Let $\Omega$ denote the space of all configurations satisfying the above conditions. One element
from $\Omega$ is shown in the right panel of Figure \ref{Figure_configuration_H2O}. We remark that
traditionally, one subdivides the six--types into $3$ pairs $a_1$, $a_2$, $b_1$, $b_2$, $c_1$,
$c_2$, as shown in Figure \ref{Figure_weights}, and we use such notation for the types of
vertices.

We now explain a procedure for sampling a $\P(b_1,b_2)$--distributed element $\omega\in \Omega$,
thus defining the measure.

Let $\omega(x,y)$ denote the type of the vertex at $(x,y)$ and proceed inductively in $x+y$, i.e.\
we first sample the vertices with $x+y=2$, then with $x+y=3$, etc. When we sample $\omega(x,y)$,
the vertices $\omega(x-1,y)$ and $\omega(x,y-1)$ are already known, therefore, we know whether the
south and west lines in $\omega(x,y)$ should be bold or not. Thus, we have four cases:
\begin{itemize}
 \item If both west and south lines in $\omega(x,y)$ are bold, then we set $\omega(x,y)$ to
 be of type $a_2$.
 \item If both west and south lines in $\omega(x,y)$ are not bold, then we set $\omega(x,y)$ to
 be of type $a_1$.
 \item If the west line in $\omega(x,y)$ is bold, while the south line in $\omega(x,y)$
 is not bold, then we set $\omega(x,y)$ to be of type $b_2$ with probability $b_2$
 and to be of type $c_2$ with probability $1-b_2$.
 \item If the west line in $\omega(x,y)$ is not bold, while the south line in $\omega(x,y)$
 is bold, then we set $\omega(x,y)$ to be of type $b_1$ with probability $b_1$
 and to be of type $c_1$ with probability $1-b_1$.
\end{itemize}

\begin{definition} $\P(b_1,b_2)$ is the probability measure on $\Omega$ obtained by the
above sampling procedure. \label{Definition_P}
\end{definition}

The measures $\P(b_1,b_2)$ can be put into two different contexts. In Section
\ref{Section_P_as_Gibbs} we explain that these measures are specific Gibbs measures for the
six--vertex model with a quadratic relation on weights of the vertices $a_1,a_2,b_1,b_2,c_1,c_2$.
In Section \ref{Section_P_as_interacting} we explain that $\P(b_1,b_2)$ can be viewed as the law
of an interacting particle system.

\subsection{$\P(b_1,b_2)$ as a Gibbs measure for the six--vertex model}

\label{Section_P_as_Gibbs}

We want to study probability measures on $\Omega$ satisfying the Gibbs property encoded by $6$
positive weights $a_1, a_2$, $b_1, b_2$, $c_1, c_2$. The identification of weights and types of
vertices is shown in Figure \ref{Figure_weights} and somewhat abusing the notations we will use
the same notation for them; e.g.\ we write $a_1$ both for the type of a vertex and for the
corresponding positive weight.

\begin{definition}
 A probability measure $P$ on $\Omega$ is called Gibbs, if for any finite subdomain $S\subset \Z_{>0}\times
\Z_{>0} $, the conditional distribution of $P$ on the configurations inside $S$ given the
 configuration outside $S$ has weights proportional to
 $$
  (a_1)^{\#(a_1)}   (a_2)^{\#(a_2)}   (b_1)^{\#(b_1)}   (b_2)^{\#(b_2)}   (c_1)^{\#(c_1)}
  (c_2)^{\#(c_2)},
 $$
 where $\#(a_1)$ means the number of vertices of type $a_1$ inside $S$ and similarly for other
 types.
\end{definition}

We are not aware of any classification theorems for the Gibbs measures on $\Omega$. However, we
note that in related contexts theorems of this kind are known (cf.\ \cite{Shef}, \cite{GH},
\cite{G-A}).

We will obtain our Gibbs measures as limits of the Gibbs measures on growing finite subdomains of
$\Omega$.

For two positive integers $X$ and $Y$ let $\Omega(X,Y)$ denote the set of the configurations of
the six--vertex model in $\{1,\dots,X\}\times \{1,\dots,Y\}$ satisfying the same boundary
condition as configurations from $\Omega$ along the south and west boundaries and no boundary
conditions along the north and east boundaries. For instance, the right panel of Figure
\ref{Figure_configuration_H2O} can be viewed as a configuration from $\Omega(5,4)$. There are
finitely many configurations in $\Omega(X,Y)$ and we equip $\Omega(X,Y)$ with a Gibbs probability
measure $P(X,Y; a_1,a_2,b_1,b_2,c_1,c_2)$ such that the probability of $\omega\in\Omega(X,Y)$
having $\#(a_1)$ vertices of type $a_1$, $\#(a_2)$ vertices of type $a_2$, etc., is
$$
 \frac{ (a_1)^{\#(a_1)}   (a_2)^{\#(a_2)}   (b_1)^{\#(b_1)}   (b_2)^{\#(b_2)}   (c_1)^{\#(c_1)}
  (c_2)^{\#(c_2)}}{Z(X,Y; a_1,a_2,b_1,b_2,c_1,c_2)},
$$
where $Z(\cdot)$ is a normalization constant.

Clearly, $\Omega$ can be viewed as an a limit of $\Omega(X,Y)$ as $X,Y\to\infty$. We say that a
probability measure $M$ is a limit of measures $M_n$ on $\Omega(X_n,Y_n)$ as $n\to\infty$ if
$\lim_{n\to\infty} X_n=\lim_{n\to\infty} Y_n=\infty$ and for any finite collection $(x_1,y_1)$,
\dots $(x_k,y_k)$ and for any set of types $t_i\in\{a_1,a_2,b_1,b_2,c_1,c_2\}$, $i=1,\dots,k$, the
$M$--probability of having the vertices of types $t_1$, \dots, $t_k$ in positions $(x_1,y_1)$,
\dots, $(x_k,y_k)$ is the $n\to\infty$ limit of the $M_n$--probabilities for the same event. We
similarly define taking limits as only one of $X_n$, $Y_n$ tends to infinity, or if one of $X$ and
$Y$ is already infinite.

\begin{proposition} \mbox{}
\label{Proposition_measure} \begin{enumerate}\item Suppose that the weights of the six--vertex
model satisfy the quadratic relation
\begin{equation}
\label{eq_quadratic_condition}
 c_1 c_2= (a_1-b_2) (a_2-b_1),
\end{equation}
and inequality $a_1>b_2$, which guarantees that the factors in \eqref{eq_quadratic_condition} are
positive. Then the measure $\P$ (from Definition \ref{Definition_P})  can be obtained through the
following iterated limit:
\begin{equation}
\label{eq_iterated_limit}
 \P\left(\frac{b_1}{a_2},\frac{b_2}{a_1}\right)=  \lim_{Y\to\infty} \lim_{X\to\infty} P(X,Y;
 a_1,a_2,b_1,b_2,c_1,c_2),
\end{equation}
where the limits are understood as explained above.
\item Suppose that the weights of the six--vertex model satisfy the quadratic relation
\begin{equation}
\label{eq_quadratic_condition_2}
 c_1 c_2= (a_1-b_1) (a_2-b_2),
\end{equation}
and inequality $a_1>b_1$, which guarantees that the factors in \eqref{eq_quadratic_condition_2}
are positive. Then the measures $\P$ (from Definition \ref{Definition_P}) can be obtained through
the following iterated limit:
\begin{equation}
\label{eq_iterated_limit_2}
 \P\left(\frac{b_1}{a_1},\frac{b_2}{a_2}\right)=  \lim_{X\to\infty} \lim_{Y\to\infty} P(X,Y; a_1,a_2,b_1,b_2,c_1,c_2).
\end{equation}
\end{enumerate}
\end{proposition}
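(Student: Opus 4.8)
The plan is to exhibit the finite Gibbs measure $P(X,Y;\vec w)$ as an explicit reweighting of a ``stochastic'' reference measure on the same box, and then to show the reweighting becomes trivial in the iterated limit. Assume \eqref{eq_quadratic_condition} with $a_1>b_2$ (hence also $a_2>b_1$), put $\beta_1:=b_1/a_2$ and $\beta_2:=b_2/a_1$ (both in $(0,1)$), and let $\hat P_{X,Y}$ be the restriction of $\P(\beta_1,\beta_2)$ to the box $\{1,\dots,X\}\times\{1,\dots,Y\}$. Since the sampling of Definition \ref{Definition_P} picks the type of a vertex using only its south and west edges, this restriction can be generated by running that sampling on box vertices only, so $\hat P_{X,Y}(\omega)=\prod_v\hat w(\text{type of }v)$ with the stochastic weights $\hat a_1=\hat a_2=1$, $\hat b_1=\beta_1$, $\hat c_1=(a_2-b_1)/a_2$, $\hat b_2=\beta_2$, $\hat c_2=(a_1-b_2)/a_1$ (which again obey \eqref{eq_quadratic_condition}), and in particular $\sum_\omega\prod_v\hat w(\text{type of }v)=1$. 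It therefore suffices to compute $\prod_v w(\text{type of }v)/\prod_v\hat w(\text{type of }v)$ and then handle the normalization $Z(X,Y;\vec w)$.

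For $\omega\in\Omega(X,Y)$ let $D_i(\omega)$ ($0\le i\le Y$, $D_0=0$) denote the number of columns $j$ for which the edge of column $j$ between heights $i$ and $i+1$ is not bold --- the number of ``empty columns above row $i$''. Compare $\prod w$ with $\prod\hat w$ row by row. In row $i$ conservation of bold lines gives $\#\{a_1,b_2,c_2\text{ in row }i\}=D_{i-1}$ and $\#\{a_2,b_1,c_1\text{ in row }i\}=X-D_{i-1}$ (the vertices with empty, resp.\ bold, south edge), while $\#\{c_1\text{ in row }i\}-\#\{c_2\text{ in row }i\}=D_i-D_{i-1}\in\{0,1\}$ (a $c_1$ opens and a $c_2$ closes a bold horizontal run, and the net count equals the bold horizontal edge leaving row $i$ on the right, which by vertical conservation is exactly the increment of the empty-column count). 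Plugging these in and using \eqref{eq_quadratic_condition} through the cancellation $\tfrac{a_2c_1}{a_2-b_1}\cdot\tfrac{a_1c_2}{a_1-b_2}=a_1a_2$, the per-row ratio collapses to $a_2^{X}(a_1/a_2)^{D_{i-1}}\bigl(\tfrac{c_1}{a_2-b_1}\bigr)^{D_i-D_{i-1}}$. Taking the product over $i$ and telescoping (with $D_0=0$),
\[
\frac{\prod_v w(\text{type of }v)}{\prod_v\hat w(\text{type of }v)}=a_2^{XY}\Bigl(\frac{a_1}{a_2}\Bigr)^{\!S(\omega)}\Bigl(\frac{c_1}{a_2-b_1}\Bigr)^{\!D_Y(\omega)},\qquad S(\omega):=\sum_{i=1}^{Y-1}D_i(\omega),
\]
so $P(X,Y;\vec w)$ is $\hat P_{X,Y}$ reweighted by $(a_1/a_2)^{S}\bigl(\tfrac{c_1}{a_2-b_1}\bigr)^{D_Y}$ and renormalized, with $Z(X,Y;\vec w)=a_2^{XY}\,\E_{\hat P_{X,Y}}\!\bigl[(a_1/a_2)^{S}(\tfrac{c_1}{a_2-b_1})^{D_Y}\bigr]$.

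Now send $X\to\infty$ with $Y$ fixed. From $D_i-D_{i-1}\in\{0,1\}$ one has $D_i\le i$ always; under $\hat P_{X,Y}$ the event $D_i<i$ forces, in one of the first $i$ rows, a run of order $X$ consecutive non-turning vertices, which has probability $O(\max(\beta_1,\beta_2)^{X-o(X)})\to0$, so $D_i=i$ for every $i\le Y$ with $\hat P_{X,Y}$-probability $\to1$; moreover $D_i=i$ holds $\P(\beta_1,\beta_2)$-almost surely (the same conservation over columns $1,\dots,M$, letting $M\to\infty$, gives $D_i-D_{i-1}=1$). On this full-probability event $S=\binom{Y}{2}$ and $D_Y=Y$ are constants, so the reweighting factor is asymptotically constant and is cancelled by the normalization; together with $\hat P_{X,Y}(E)=\P(\beta_1,\beta_2)(E)$ as soon as $X$ exceeds the width of a cylinder event $E$, bounded convergence gives $\lim_{X\to\infty}P(X,Y;\vec w)=\P(\beta_1,\beta_2)$ restricted to rows $1,\dots,Y$. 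Letting $Y\to\infty$ gives $\P(\beta_1,\beta_2)=\P(b_1/a_2,b_2/a_1)$, since these are restrictions of one fixed measure to ever more rows; this is \eqref{eq_iterated_limit}. The second statement follows from the mirror argument with rows and columns interchanged: build the configuration column by column, use \eqref{eq_quadratic_condition_2} (the relation making the per-column cancellation work, equivalently making $\hat a_1=\hat a_2=1,\ \hat b_1=b_1/a_1,\ \hat b_2=b_2/a_2,\ \hat c_1=(a_1-b_1)/a_1,\ \hat c_2=(a_2-b_2)/a_2$ compatible with the quadratic constraint), and replace $D_i$ by the quantity $G_j$, the number of bold horizontal edges between columns $j$ and $j+1$, which equals $j$ in the quadrant; sending first $Y\to\infty$ and then $X\to\infty$ produces $\P(b_1/a_1,b_2/a_2)$, i.e.\ \eqref{eq_iterated_limit_2}.

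The only genuine work is the weight identity displayed above: it is the single place where \eqref{eq_quadratic_condition} (resp.\ \eqref{eq_quadratic_condition_2}) is used, and it requires organizing the per-row (resp.\ per-column) counts of the six vertex types in terms of the statistic $D_i$ (resp.\ $G_j$) so that the $c_1$- and $c_2$-weights pair off and collapse. Everything after that is soft, resting only on the deterministic value $D_i=i$ (resp.\ $G_j=j$) in the quadrant --- which is what makes the tilt asymptotically constant --- and on the locality of the sampling of Definition \ref{Definition_P}.
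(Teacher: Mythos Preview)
Your proof is correct and follows essentially the same strategy as the paper's: both arguments recognize that once $X\to\infty$ with $Y$ fixed, certain combinatorial counts (your $D_i$, or equivalently the paper's counts of vertex types $a_2,b_1,c_1$ versus $a_1,b_2,c_2$) become deterministic, so that the Gibbs weights differ from the stochastic weights $(1,1,\beta_1,\beta_2,1-\beta_1,1-\beta_2)$ by an $\omega$-independent factor that cancels in the normalization. Your presentation via an explicit Radon--Nikodym derivative $\prod_v w/\prod_v \hat w$ and its row-by-row collapse using \eqref{eq_quadratic_condition} is somewhat more explicit than the paper's direct gauge-rescaling of the six weights, but the content is the same; likewise your column-by-column mirror for part~2 is equivalent to the paper's diagonal-reflection involution. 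The probability estimate that $D_i=i$ with $\hat P_{X,Y}$-probability $\to1$ is at the same (informal) level as the paper's corresponding step, and is made harmless by the uniform bound $0\le S\le\binom{Y}{2}$, $0\le D_Y\le Y$ on the exponents in the tilt.
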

\noindent{\bf Remark 1.} It is very plausible that the limits in \eqref{eq_iterated_limit},
\eqref{eq_iterated_limit_2} exist also when the conditions \eqref{eq_quadratic_condition},
\eqref{eq_quadratic_condition_2} are not satisfied, and, even further, when instead of iterative
limit, we send $X$ and $Y$ to $\infty$ simultaneously in certain regular ways. However, the
limiting measures would probably be different from the family $\P( b_1,b_2)$ and we do not analyze
such limits of measures in the present article.

\noindent{\bf Remark 2.} The quadratic relation \eqref{eq_quadratic_condition} was introduced in
 \cite{GwaSpohn}. Under this condition the transfer matrix (see Section \ref{Section_transfer})
 can be easily renormalized to become
 stochastic (and related to interacting particle systems described in Section \ref{Section_P_as_interacting}).

\begin{proof}[Proof of Proposition \ref{Proposition_measure}] We first analyze $\lim_{Y\to\infty} \lim_{X\to\infty}
P(X,Y; a_1,a_2,b_1,b_2,c_1,c_2)$ under condition \eqref{eq_quadratic_condition}.

Take a fixed finite $Y$ and large $X$ and consider the $P(X,Y;
a_1,a_2,b_1,b_2,c_1,c_2)$--distributed configuration $\omega$. Note that our boundary condition
implies that in the horizontal row $y$ there are at most $y$ vertices of types $a_1$, $b_2$,
$c_1$, $c_2$ and all others are either of type $a_2$ or $b_1$. Furthermore, the vertices of types
$a_2$ and $b_1$ clearly cannot be horizontally adjacent. Together with the inequality $a_2>b_1$
this implies that with probability tending to $1$ as $X\to\infty$ all the vertices in the vertical
column $X$ are of type $a_2$. As a conclusion, tracing the paths which start at the bottom
boundary, we see that with probability tending to $1$ the total number of the vertices of types
$a_2$, $b_1$ and $c_1$ is equal to $XY-(1+2+\dots +Y)$. In particular, this number becomes
deterministic. Since the number of all vertices is always $XY$, the total number of the vertices
of types $a_1$, $b_2$ and $c_2$ also becomes deterministic.

In addition, looking at horizontal bold lines, we see that in each horizontal row $y$, the number
of the vertices of type $c_1$ is $1$ plus the number of the vertices of type $c_2$. Therefore, the
difference between the total number of the vertices of types $c_1$ and $c_2$ also becomes
deterministic.

The argument of the previous two paragraph implies now that if we multiply the weights $a_2$,
$b_1$ and $c_1$ by a constant $U$, the weights $a_1$, $b_2$ and $c_2$ by another constant $V$, the
weight $c_1$ by another constant $W$ and the weight $c_2$ by $1/W$, then the limit $
\lim_{X\to\infty} P(X,Y; a_1,a_2,b_1,b_2,c_1,c_2)$ will not change. Choosing appropriate constants
$U=\frac{1}{a_2}$, $V=\frac{1}{a_1}$, $W=\frac{1-b_1/a_2}{c_1/a_2}$, taking into the account
\eqref{eq_quadratic_condition} and adapting the notations $\tilde b_1=b_1/a_2$, $\tilde
b_2=b_2/a_1$, we can transform the six weights $(a_1,a_2,b_1,b_2,c_1,c_2)$ into new six weights
$(1,1,\tilde b_1,\tilde b_2, 1-\tilde b_1, 1-\tilde b_2)$. Under new weights we immediately see
the convergence to the above description for sampling $\P(\tilde b_1,\tilde b_2)$ (in fact, for
the new parameters the same algorithm works for any finite domain $\Omega(X,Y)$).

\smallskip
In order to analyze the limit in the different order $\lim_{X\to\infty} \lim_{Y\to\infty} P(X,Y;
a_1,a_2,b_1,b_2,c_1,c_2)$, note that $\Omega$ possesses the following involutive symmetry: take a
configuration $\omega\in\Omega$, reflect it by the main diagonal $x=y$ and further interchange the
bold and dotted lines of Figure \ref{Figure_weights}. As a result we obtain another configuration
from $\Omega$. Note that under this transformation, the vertices of types $b_1$, $b_2$, $c_1$,
$c_2$ turn into the vertices of the same type (in reflected positions), while the vertices of
types $a_1$ and $a_2$ swap their types. Hence,  \eqref{eq_quadratic_condition_2} and
\eqref{eq_iterated_limit_2} are obtained from \eqref{eq_quadratic_condition} and
\eqref{eq_iterated_limit} by this involution.
\end{proof}

\subsection{$\P(b_1,b_2)$ as an interacting particle system}
\label{Section_P_as_interacting}

It was observed in \cite{GwaSpohn} that under the quadratic condition
\eqref{eq_quadratic_condition} the six--vertex model can be naturally related to a certain
interacting particle system. Let us describe how this works for our measures $\P(b_1,b_2)$.

\bigskip
Consider $\P(b_1,b_2)$--distributed random configuration and cut it by horizontal lines $y=t+1/2$,
$t=0,1,2,\dots$ as shown in Figure \ref{Figure_particles_intro}. The intersection of the
horizontals with bold lines  produces \emph{particle configuration}
$\x^{b_1,b_2}(t)=(x_1(t)<x_2(t)<\dots)$.

The definition of the measure $\P(b_1,b_2)$ via a local sampling procedure readily implies that
$\x^{b_1,b_2}(t)$, $t=0,1,\dots$ is a Markov chain and moreover can be viewed as an
\emph{interacting particle system} with local interactions. Indeed, given $\x^{b_1,b_2}(t)$, the
configuration $\x^{b_1,b_2}(t+1)$ can be defined as follows: we sequentially define the positions
$x_i(t+1)$ of the particles for $i=1,2,\dots$. The $i$th particle at position $x_i(t)$ jumps at
time $t+1$ to any position from the interval $\{\max(x_{i-1}(t+1)+1,x_i(t)),
\max(x_{i-1}(t+1)+1,x_i(t))+1,\dots,x_{i+1}(t) \}$ with the following probabilities:
\begin{itemize}
\item If $x_{i-1}(t+1)\le x_i(t)$, then
\begin{multline*}
 \PP \biggl(x_i(t+1)=x_i(t)+k \mid \x^{b_1,b_2}(t), x_{i-1}(t+1)\biggr)\\=\begin{cases} b_1,& k=0,\\
   (1-b_1)(1-b_2) b_2^{k-1}, & 0<k< x_{i+1}(t)-x_{i}(t),\\
   (1-b_1) b_2^{x_{i+1}(t)-x_{i}(t)-1}, &k= x_{i+1}(t)-x_{i}(t),\\
   0, & \text{otherwise.}
   \end{cases}
\end{multline*}
\item If $x_{i-1}(t+1)=x_i(t)$, then
$$
 \PP \biggl(x_i(t+1)=x_i(t)+k \mid \x(t), x_{i-1}(t+1)\biggr)=\begin{cases}
   (1-b_2) b_2^{k-1}, & 0<k< x_{i+1}(t)-x_{i}(t),\\
    b_2^{x_{i+1}(t)-x_{i}(t)-1}, &k= x_{i+1}(t)-x_{i}(t),\\
   0, & \text{otherwise.}
   \end{cases}
$$
\end{itemize}
Informally, one says that the $i$th particle flips a $b_1$--biased coin to decide whether it stays
on its position, or not. If not, then it jumps to the right with $b_2$--geometric probabilities;
when it reaches the $i+1$st particle, it can push it to the right, but at most by $1$. After that
the $i+1$st particle starts to move, etc.

The definition implies that $\x^{b_1,b_2}(0)=(1,2,3,\dots)$, such configuration at time $0$ is
usually referred to as \emph{step} initial condition. Note that the state space of the Markov
chain $\x^{b_1,b_2}(t)$ is \emph{countable}, which is a consequence of the choice of the initial
condition, see Section \ref{Section_infinitely_many} for more details.

\medskip

There is a connection of our interacting particle system to the Asymmetric Simple Exclusion
Process (ASEP). Recall that ASEP is an interacting particle system on $\mathbb Z$ in continuous
time in which each particle has two (exponential) clocks: the right clock of intensity $p$ and the
left clock of intensity $q$. When the right clock rings the particle checks whether the position
to its right is free: if Yes, then it jump by $1$ to the right, if No, then nothing happens. When
the left clock rings the particle checks whether the position to its left is free: if Yes, then it
jump by $1$ to the left, if No, then nothing happens. Afterwards the clocks are restarted. The
ASEP is a very well studied model, cf.\ \cite{Spitzer}, \cite{Li1}, \cite{Li2}, \cite{TW_review},
\cite{C_ASEP}.

We are not going to provide the proof, but it is very plausible that the following is true: for
any two reals $p,q>0$
\begin{equation}
\label{eq_limit_to_ASEP}
  \lim_{\eps\to 0} \x^{\eps q,\eps p}(\lfloor \eps^{-1}t\rfloor)-\eps^{-1} t = \y^{p,q}(t),
\end{equation}
 where $\y^{p,q}(t)$ is ASEP started with step initial condition, i.e.\ $\y^{p,q}(0)=(1,2,3,\dots)$.
It is important to emphasize the subtraction of $\eps^{-1} t$ in \eqref{eq_limit_to_ASEP}. In
other words, we observe ASEP near the diagonal. Note that for $k$--particle version of
$\x^{b_1,b_2}(t)$ an analogue of \eqref{eq_limit_to_ASEP} is straightforward, moreover, it will be
clear below (see Theorem \ref{Theorem_eigentransfer} and remark after it) that the formulas we get
for the transitional probabilities of the $k$--particle version of $\x^{b_1,b_2}(t)$ converge to
those for $k$--particle ASEP. However, in order to rigorously prove \eqref{eq_limit_to_ASEP} one
would need to deal with infinitely many particles which we leave out of the scope of the present
article.

\bigskip

Another curious limit is obtained if we send $b_1\to 1$ (we again do not provide a complete
proof). Set
\begin{equation}
\label{eq_limit_to_waking}
 \z^{b}(t)= \lim_{\eps\to 0} \x^{\eps^{-1},b}(\lfloor \eps^{-1}t\rfloor).
\end{equation}
The dynamics $\z^b(t)$ has the following description: each particle has an independent exponential
clock of rate $1$. When the clock of $i$th particle rings at time $t$, it wakes up and jumps to
the right by $k$ steps  $(0<k< x_{i+1}(t)-x_{i}(t))$ with probability $(1-b)b^{k-1}$. With
remaining probability  $b^{x_{i+1}(t)-x_{i}(t)-1}$ the $i$th particle jumps to the position of
$x_{i+1}(t)$ of the $(i+1)$st particle and in this latter case the $(i+1)$st particle also wakes
up and repeats the same procedure, etc.

\section{Transfer matrices}
\label{Section_transfer}

Proofs of many results in statistical mechanics use the transfer matrices, including the
celebrated computation of the free energy in the six vertex model, cf.\ \cite{Lieb}, \cite{Baxter}
and references therein. They are crucial for our analysis as well.

\subsection{Finitely many lines}

Fix an integer $N>0$ and let $\W_N$ denote the set of ordered $N$-tuples of integers
$x_1<x_2<\dots<x_N$.

Let $\mathcal L$ denote one-row configurations of the six--vertex model, i.e.\ these are the
assignments of six types of vertices of Figure \ref{Figure_weights} to $\mathbb Z\times \{1\}$
such that the bold line configurations for all the adjacent vertices agree with each other. Take
two elements $\x,\y\in\W_N$. We say that $\ell\in \mathcal L$ sends $\x$ to $\y$ if all the bold
south lines in $\ell$ are at the positions of $\x$ and all the bold north lines are at the
positions of $\y$, cf.\ Figure \ref{Figure_one_line}. Note that for each pair $\x,\y\in\W_N$ there
is at most one $\ell\in \mathcal L$ sending $\x$ to $\y$.

\begin{figure}[h]
\begin{center}
 {\scalebox{0.75}{\includegraphics{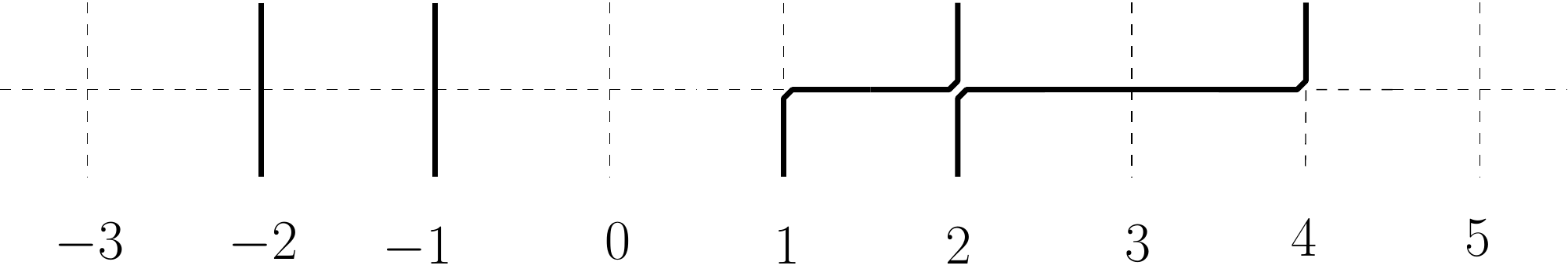}}}
 \caption{An element from $\mathcal L$ sending $(-2<-1<1<2)\in \W_4$ to $(-2<-1<2<4)\in \W_4$.
 \label{Figure_one_line}}
\end{center}
\end{figure}

Now fix six weight parameters $(a_1,a_2,b_1,b_2,c_1,c_2)$ such that $a_1=1$ and define \emph{the
transfer matrix} $\T^{(N)}(\x\to\y; 1,a_2,b_1,b_2,c_1,c_2)$ with rows and columns parameterized by
$\W_N$ via
\begin{multline}\label{eq_def_T}
 \T^{(N)}(\x\to\y; 1,a_2,b_1,b_2,c_1,c_2)\\=\begin{cases}   (a_2)^{\#(a_2[\ell])}   (b_1)^{\#(b_1[\ell])}   (b_2)^{\#(b_2[\ell])}   (c_1)^{\#(c_1[\ell])}
  (c_2)^{\#(c_2[\ell])},& \ell \text{ sends } \x\text{ to }\y,\\
  0,& \text{there is no such }\ell,
  \end{cases}
\end{multline}
where $\#(a_2[\ell])$ is the number of vertices of type $a_2$ in $\ell$, and similarly for other
types of vertices.

The quadratic relation \eqref{eq_quadratic_condition} can be interpreted in terms of matrices
$\T^{(N)}$, as was first observed in \cite{GwaSpohn}.

\begin{proposition} \label{Proposition_normalization}
 The matrices $\T^{(N)}(\x\to\y; 1,a_2,b_1,b_2,c_1,c_2)$ are stochastic for all $N=1,2,\dots$ if and only if
 \begin{equation}
 \label{eq_stochastic_condition}
  a_2=1,\quad c_1 c_2=(1-b_1)(1-b_2),\quad   b_1<1.
 \end{equation}
 The matrices $\T^{(N)}(\x\to\y; 1,a_2,b_1,b_2,c_1,c_2)$ can be normalized to be stochastic, i.e.\ for every $N\in\mathbb Z_{>0}$ and
 $\x\in \W_N$ we have
 $$
  \sum_{Y\in \W_N}  \T^{(N)}(\x\to\y; 1,a_2,b_1,b_2,c_1,c_2) = C_N,\quad \text{ where }C_N \text{ does not depend on }\x,
 $$
 if and only if
 \begin{equation}
\label{eq_GS_condition} c_1 c_2= (a_1-b_2)(a_2-b_1), \quad a_1=1,\quad b_2<1.
\end{equation}
 In the latter case the \emph{normalized} transfer matrix can be identified with another transfer
 matrix with six weights $(1,1,b_1/a_2, b_2/a_1, 1-b_1/a_2, 1-b_2/a_1)$.
\end{proposition}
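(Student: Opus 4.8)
\textbf{Proof proposal for Proposition \ref{Proposition_normalization}.}

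The plan is to evaluate the row sums $\sum_{\y\in\W_N}\T^{(N)}(\x\to\y;1,a_2,b_1,b_2,c_1,c_2)$ explicitly by building the one-row configuration $\ell$ column by column from left to right and tracking only the single horizontal edge. In a one-row configuration there is exactly one horizontal edge between any two consecutive columns, so the only datum carried along is a bit $h\in\{0,1\}$ recording whether that edge is bold; at a column $x$ the ``vertical bit'' (south line bold or not) equals $1$ iff $x\in\x$. Conservation of bold lines forces the four cases of Figure~\ref{Figure_weights}: with both bits $0$ the vertex is of type $a_1$ (weight $1$, new $h=0$); with both bits $1$ it is of type $a_2$ (new $h=1$); with $h=0$ and south bold one picks type $b_1$ (weight $b_1$, new $h=0$) or $c_1$ (weight $c_1$, new $h=1$); with $h=1$ and south not bold one picks type $b_2$ (weight $b_2$, new $h=1$) or $c_2$ (weight $c_2$, new $h=0$). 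Consequently the row sum amounts to evaluating the bi-infinite product of the $2\times2$ matrices
$$
M_0=\begin{pmatrix} 1 & 0 \\ c_2 & b_2 \end{pmatrix},\qquad
M_1=\begin{pmatrix} b_1 & c_1 \\ 0 & a_2 \end{pmatrix}
$$
(rows and columns indexed by $\{0,1\}$, row $=$ incoming state), with one factor $M_1$ for each column in $\x$ and one factor $M_0$ for every other column, read in increasing order of $x$, between the row vector $(1,0)$ on the left and the column vector $(1,0)^\top$ on the right.

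The left tail is harmless since $(1,0)M_0=(1,0)$. The right tail converges if and only if $b_2<1$: if $b_2\ge1$ the series diverges, because (positivity of $c_1,c_2$) there are configurations with one horizontal line turning up arbitrarily far to the right, of weights of order $c_1c_2\,b_2^{k}$. So the transfer matrices can be stochastic, or normalizable, only when $b_2<1$, which I assume henceforth; then $M_0^{k}(1,0)^\top\to w:=(1,\beta)^\top$ with $\beta=c_2/(1-b_2)$, and, writing $g_1,\dots,g_{N-1}$ for the gaps $x_{i+1}-x_i-1$ of $\x$,
$$
\sum_{\y\in\W_N}\T^{(N)}(\x\to\y)=(1,0)\,M_1 M_0^{g_1}M_1 M_0^{g_2}\cdots M_0^{g_{N-1}}M_1\,w .
$$
One checks directly that $M_0 w=w$, while $M_1 w=(\rho,a_2\beta)^\top$ where $\rho:=b_1+c_1\beta=b_1+\tfrac{c_1c_2}{1-b_2}$, so $M_1 w=\rho\,w$ exactly when $a_2=\rho$. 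If $a_2=\rho$ the product above collapses to $\rho^N(1,0)w=\rho^N$, independent of $\x$; conversely, already the $N=2$ row sum equals $\rho^2+c_1\beta\,b_2^{g_1}(a_2-\rho)$, which genuinely depends on $\x$ unless $a_2=\rho$ (as $c_1\beta>0$ and $b_2^{g_1}$ is non-constant in $g_1$). Hence the matrices are normalizable for all $N$ iff $b_2<1$ and $a_2=\rho$; since $a_1=1$, the identity $a_2=\rho$ rearranges to $c_1c_2=(a_1-b_2)(a_2-b_1)$, which is \eqref{eq_GS_condition}, and it forces the factor $a_2-b_1$ to be positive. For stochasticity one needs in addition $C_N=\rho^N=1$ for all $N$, i.e.\ $\rho=1$, equivalently $c_1c_2=(1-b_1)(1-b_2)$; combined with $a_2=\rho=1$ and $b_1<1$ (forced by positivity of $c_1c_2$ and by $b_2<1$) this is exactly \eqref{eq_stochastic_condition}. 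Both converses follow by reading the $N=1$ and $N=2$ row sums backwards.

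For the last assertion I would use two scaling invariances, each transparent from the line picture: (i) every one-row configuration has exactly $N=|\x|$ vertices among the types $a_2,b_1,c_1$ (precisely the columns carrying a bold south line), so rescaling $(a_2,b_1,c_1)\mapsto(\mu a_2,\mu b_1,\mu c_1)$ multiplies every $\T^{(N)}(\x\to\y)$, hence $C_N$, by $\mu^N$; (ii) every one-row configuration satisfies $\#(c_1)=\#(c_2)$ (each maximal bold horizontal segment begins at a $c_1$ and ends at a $c_2$), so rescaling $(c_1,c_2)\mapsto(\lambda c_1,\lambda^{-1}c_2)$ leaves every $\T^{(N)}(\x\to\y)$ unchanged. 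Under \eqref{eq_GS_condition} we have $C_N=\rho^N=a_2^N$; applying (i) with $\mu=a_2^{-1}$ converts the weights to $(1,1,b_1/a_2,b_2,c_1/a_2,c_2)$ and simultaneously divides each row by its sum, and then applying (ii) with $\lambda=(a_2-b_1)/c_1$ replaces $(c_1/a_2,c_2)$ by $(1-b_1/a_2,1-b_2)$ — using \eqref{eq_GS_condition} to check $(1-b_1/a_2)(1-b_2)=(c_1/a_2)c_2$ — without altering the matrix. Since $a_1=1$, these are the weights $(1,1,\,b_1/a_2,\,b_2/a_1,\,1-b_1/a_2,\,1-b_2/a_1)$, proving the claim. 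I expect the main obstacle to be the careful treatment of the infinite sum: pinpointing $b_2<1$ as the exact convergence threshold, legitimately reorganizing the bi-infinite matrix product into the finite expression above, and establishing cleanly the two combinatorial bookkeeping facts (one horizontal edge per vertical cut, and $\#(c_1)=\#(c_2)$ in every row) on which both the $2\times2$ transfer picture and the scaling invariances rest; everything beyond that is a short linear-algebra computation with the fixed vector $w$.
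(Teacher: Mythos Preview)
Your proof is correct and takes a cleaner route than the paper's. The paper establishes necessity by brute-force enumeration of all $\y$'s for $N=1$ and $N=2$, arriving at the same quantity $\rho=b_1+\frac{c_1c_2}{1-b_2}$ and the same $N=2$ row sum (up to a sign in the cross term, which is immaterial); for sufficiency it rescales to $a_2=1$ and then invokes the sequential stochastic sampling interpretation of the weights $(1,1,b_1,b_2,1-b_1,1-b_2)$ to conclude that every row of $\T^{(N)}$ sums to $1$. Your $2\times 2$ horizontal transfer-matrix argument replaces both steps at once: the right-tail limit vector $w=(1,\beta)^\top$ is fixed by $M_0$, and the single eigenvector condition $M_1 w=\rho w\Leftrightarrow a_2=\rho$ simultaneously yields the exact row sum $\rho^N$ for every $N$ and the necessity from $N=2$. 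This is more systematic and avoids the appeal to the probabilistic sampling picture. The final identification via the two rescalings $(a_2,b_1,c_1)\mapsto\mu(a_2,b_1,c_1)$ and $(c_1,c_2)\mapsto(\lambda c_1,\lambda^{-1}c_2)$ is essentially the same in both proofs; the paper uses the first rescaling explicitly and the second is implicit in its earlier observation that $\T^{(N)}$ depends on $c_1,c_2$ only through $c_1c_2$.
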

\noindent{\bf Remark.}
 When \eqref{eq_stochastic_condition} is satisfied, the matrices $\T^{(N)}(\x\to\y;
 1,a_2,b_1,b_2,c_1,c_2)$ are precisely transition probabilities of  the finite number of particles
 version of the Markov chain $\x^{b_1,b_2}(t)$ discussed in Section
 \ref{Section_P_as_interacting}.
\begin{proof}[Proof of Proposition \ref{Proposition_normalization}]
We start with one particle, i.e.\ $N=1$ and $\x=x_1$. There are two possibilities for $\y=y_1$
with non-vanishing $\T^{(N)}(\x\to\y;1,a_2,b_1,b_2,c_1,c_2)$:
\begin{itemize}
 \item $y_1=x_1$, then $\T^{(N)}(\x\to\y;1,a_2,b_1,b_2,c_1,c_2)=c_1c_2$,
 \item $y_1=x_1+k$, $k=1,2,\dots$, then $\T^{(N)}(\x\to\y;1,a_2,b_1,b_2,c_1,c_2)=b_2^{k-1} c_1 c_2.$
\end{itemize}
 The total sum of weights is (assuming $b_2<1$, otherwise it diverges)
\begin{equation}
\label{eq_1_part_sum}
 \sum_{\y\in \W_1} \T^{(N)}(\x\to\y;1,a_2,b_1,b_2,c_1,c_2)=b_1+\frac{c_1 c_2}{1-b_2}.
\end{equation}
 This sum is $1$ and the transition
matrix is stochastic if and only if $c_1 c_2=(1-b_1)(1-b_2)$. Note also that the sum
\eqref{eq_1_part_sum} does not depend on $\x$. Thus, even if \eqref{eq_1_part_sum} is not $1$, we
still can renormalize the matrix $\T^{(1)}$ to make it stochastic.

We continue with two particle configurations ($N=2$) with  $\x=(x_1<x_2)$. There are several
possibilities for $\y=(y_1<y_2)$ with nonzero $\T^{(2)}(\x\to\y;1,a_2,b_1,b_2,c_1,c_2)$:
\begin{itemize}
 \item $y_1=x_1$, $y_2=x_2$, then $\T^{(N)}(\x\to\y)=b_1 b_1$.
 \item $y_1=x_1$, $y_2=x_2+k$, $k=1,2,\dots$, then $\T^{(N)}(\x\to\y)=b_1 c_1 c_2 b_2^{k-1}$,
 \item $y_1=x_1+k$, $k=1,\dots,x_2-x_1-1$, $y_2=x_2$, then $\T^{(N)}(\x\to\y)=b_1 c_1 c_2 b_2^{k-1}$
 \item $y_1=x_1+k$, $k=1,\dots,x_2-x_1-1$, $y_2=x_2+\ell$, $\ell=1,2,\dots$, then
$\T^{(N)}(\x\to\y)= c_1 c_2 b_2^{k-1} c_1 c_2 b_2^{\ell-1}$.
 \item $y_1=x_2$, $y_2=x_2+\ell$, $\ell=1,2,\dots$, then $\T^{(N)}(\x\to\y)= c_1 a_2 b_2^{x_2-x_1-1}
c_2 b_2^{\ell-1}$.
\end{itemize}
Let us do the summation over $\y\in\W_2$. The first four scenarios give (again, assuming $b_2<1$):
$$
\left(b_1 + c_1c_2 \frac{1-b_2^{x_2-x_1-1}}{1-b_2}\right)\cdot
\left(b_1+\frac{c_1c_2}{1-b_2}\right).
$$
The fifth scenario gives
$$
 c_1c_2 a_2 \frac{b_2^{x_2-x_1-1}}{1-b_2}.
$$
The total sum
\begin{equation}
\label{eq_norm_2} \sum_{\y\in \W_2}\T^{(2)}(\x\to\y;1,a_2,b_1,b_2,c_1,c_2)=
\left(b_1+\frac{c_1c_2}{1-b_2}\right)^2 + c_1c_2
\frac{b_2^{x_2-x_1-1}}{1-b_2}\left(b_1+\frac{c_1c_2}{1-b_2}-a_2\right)
\end{equation}
is independent of $x_1$, $x_2$ if and only if
\begin{equation}
\label{eq_normalization_condition}
 b_1+\frac{c_1 c_2}{1-b_2}-a_2=0
\end{equation}
which is precisely the condition \eqref{eq_GS_condition}. Further, if
\eqref{eq_normalization_condition} holds, then \eqref{eq_norm_2} turns into $a_2^2$, so
stochasticity of $\T^{(2)}$ implies $a_2=1$. This argument proves that the restrictions
\eqref{eq_stochastic_condition} and \eqref{eq_GS_condition} and necessary.

Further, suppose that \eqref{eq_GS_condition} is satisfied. Note that if multiply the weights
$a_2$, $b_1$ and $c_1$ (these vertices have bold south edges) simultaneously by the same constant
$D$, then the weight of every configuration (i.e.\ every matrix element in $\T^{(N)}$) is
multiplied by $D^N$. With this transformation we can now set $a_2=1$. Then \eqref{eq_GS_condition}
turns into \eqref{eq_stochastic_condition} and this proves that thus normalized transfer matrix
fits into the form \eqref{eq_stochastic_condition}.

It remains to prove the sufficiency of \eqref{eq_stochastic_condition} for the stochasticity of
the transfer matrix $\T^{(N)}$ for all $N=1,2,3,\dots$.

Note the transfer-matrix $\T^{(N)}(\,\cdot\,;1,a_2,b_1,b_2,c_1,c_2)$ depends on $c_1$, $c_2$ only
through their product $c_1c_2$
--- indeed, the vertices of these two types
appear in pairs. Thus, under \eqref{eq_stochastic_condition} we can replace the weights by those
satisfying
\begin{equation}
\label{eq_stoch_condition_2}
 a_2=b_1+c_1=b_2+c_2=1,
\end{equation}
without changing the transfer matrix. Now the application of such transfer matrix has a clear
stochastic meaning along the lines of the definition of measures $\P(b_1,b_2)$ in Section
\ref{Section_P} and the interacting particle system in Section \ref{Section_P_as_interacting}.
Namely we define the types of vertices sequentially from left to the right. Then for each vertex
either we insert a type $a_1/a_2$ vertex (whose type is already uniquely defined), or we choose
between $b_1$ and $c_1$ with corresponding probabilities, or we choose between $b_2$ and $c_2$
with corresponding probabilities. This proves that under the condition
\eqref{eq_stoch_condition_2} (or, equivalently, \eqref{eq_stochastic_condition}) the transfer
matrix is stochastic for every $N\ge 1$.
\end{proof}

\subsection{Infinitely many lines}

\label{Section_infinitely_many}

In this section we explain how the transfer matrices $\T^{(N)}$ of the previous section are
related to the measures $\P(b_1,b_2)$.

Let $\W_\infty$ denote the set of infinite \emph{stabilizing} growing sequences of integers
$\x=x_1<x_2<\dots$, i.e.\ such that $x_{i+1}=x_i+1$ for all large enough $i$. For $N=1,2,\dots$,
let $pr_N$ denote the projection $pr_N: \W_\infty\to \W_N$ mapping a sequence to its first $N$
coordinates.

Define the $\W_\infty\times \W_\infty$ matrix $\T^{(\infty)}$ via
\begin{equation}
\label{eq_infinite_transfer_matrix}
 \T^{(\infty)}(\x\to\y;1,a_2,b_1,b_2,c_1,c_2)=\lim_{N\to\infty} \T^{(N)}(pr_N(\x)\to pr_N(\y);
 1,a_2,b_1,b_2,c_1,c_2).
\end{equation}
\begin{lemma}
\label{lemma_T_infty}
 If the weights of the six--vertex model are such that $a_1=a_2=1$ and $b_1<1$, then the
 limit in \eqref{eq_infinite_transfer_matrix} exists. Moreover, $\T^{(\infty)}(\x\to\y;1,1,b_1,b_2,c_1,c_2)$
 vanishes unless $y_i=x_i+1$ for all large enough $i$.
\end{lemma}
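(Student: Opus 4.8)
The plan is to prove both assertions of Lemma~\ref{lemma_T_infty} simultaneously by exploiting the near-triviality of the transfer matrix $\T^{(N)}$ far from the positions where $\x$ and $\y$ differ. The key structural observation is the following: because $\x,\y\in\W_\infty$ are stabilizing, there is an index $M$ such that $x_i=x_{i-1}+1$ and $y_i=y_{i-1}+1$ for all $i\ge M$, and moreover $x_i=y_i$ for all $i\ge M$ after enlarging $M$ if the two stable tails are even compatible at all. First I would analyze, for $N>M$, the one-row configuration $\ell\in\mathcal L$ (if it exists) that sends $pr_N(\x)$ to $pr_N(\y)$. Since $a_1=a_2=1$, the only vertices carrying a nontrivial weight are those of type $b_1,b_2,c_1,c_2$, and these occur only in the ``active'' region where the bold line pattern of $\ell$ is nonconstant. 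In the stable tail region (coordinates to the right of roughly $\max(x_M,y_M)$) every site of $\ell$ is of type $a_1$ or $a_2$ — a solid block of bold south-and-north edges at the tail positions with empty sites in between — so these sites contribute weight $1$ and do not affect the matrix element.

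Making this precise is the main technical point. I would argue that for $N$ large enough, the configuration $\ell_N$ sending $pr_N(\x)$ to $pr_N(\y)$, \emph{if it exists}, is obtained from $\ell_{N-1}$ by appending one more $a_2$-vertex (or, depending on parity of the tail, one $a_1$ followed by tail $a_2$'s), which has weight $1$; hence $\T^{(N)}(pr_N(\x)\to pr_N(\y))=\T^{(N-1)}(pr_{N-1}(\x)\to pr_{N-1}(\y))$ for all $N\ge N_0$, where $N_0$ depends on $\x,\y$. This immediately gives existence of the limit in \eqref{eq_infinite_transfer_matrix}, and in fact shows the sequence is eventually constant. For the vanishing statement: if $y_i\ne x_i+1$ for infinitely many $i$, then in particular the stable tails of $\x$ and $\y$ do not coincide — i.e.\ there is no $M$ with $x_i=y_i$ for all $i\ge M$ — and I would show that then no $\ell\in\mathcal L$ can send $pr_N(\x)$ to $pr_N(\y)$ once $N$ is large. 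The point is that a row configuration conserves the number of bold vertical lines passing through it: reading across $\mathbb Z\times\{1\}$, each $c_1$ creates a horizontal bold segment and each $c_2$ absorbs one, while $b_1,b_2$ carry horizontal bold edges along; since $\ell$ has no bold line entering from $x=-\infty$ (all relevant vertices are eventually trivial to the left as well, as $x_1$ is finite), the total count of bold south legs must equal the total count of bold north legs, so $N$ south legs (at positions of $\x$) force exactly $N$ north legs (at positions of $\y$). But if the stable tail positions of $\x$ and $\y$ differ, then for large $N$ the set $\{x_1,\dots,x_N\}$ and $\{y_1,\dots,y_N\}$ have different ``shapes'' at the right end that cannot be reconciled by a finite active region — more carefully, the rightmost $x_N$ and $y_N$ would have to agree up to a bounded shift, and the stabilization forces them to agree exactly beyond some point, contradiction.

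Concretely, the cleanest route to the vanishing claim is: recall from Section~\ref{Section_transfer} that for each pair there is \emph{at most one} $\ell$ sending $\x$ to $\y$, and that $\ell$ exists iff $\y$ is reachable from $\x$ by the one-step rule, which requires $y_i\ge x_i$ for all $i$ (particles only move right) and, crucially, that the multiset $\{y_i-x_i\}$ is finitely supported in a way compatible with the geometry — in particular the tails must eventually satisfy $y_i=x_i+1$. I would spell this out by induction on the column index of $\ell$ from left to right: once the bold-line profile of $\ell$ has ``caught up'' so that all remaining south legs are at consecutive positions matching consecutive north legs, every further vertex is forced to be $a_2$, pinning $y_i=x_i+1$ thereafter. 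If instead $y_i\neq x_i+1$ infinitely often, this catch-up never completes, no finite-support $\ell$ exists, and the matrix element is $0$ for all large $N$, hence in the limit.

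The main obstacle I anticipate is purely bookkeeping: carefully setting up the correspondence between one-row configurations $\ell$ and the combinatorial data $(\x,\y)$ near the stabilization point, and checking the parity/edge-matching so that appending vertices to pass from $pr_{N-1}$ to $pr_N$ genuinely contributes weight exactly $1$ (rather than, say, an extra $c_1c_2$ factor). This is where one must use $a_1=a_2=1$ essentially — with general $a_1,a_2$ the tail would contribute $a_2^{\,\#}$ and the limit would diverge or vanish trivially — and where the hypothesis $b_1<1$ is \emph{not} actually needed for this lemma (it is needed later for stochasticity), so I would remark that the limit's existence here is a soft statement about eventual constancy of the sequence. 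Once the tail is understood, existence of the limit and the vanishing dichotomy both drop out with no analysis, only finite combinatorics.
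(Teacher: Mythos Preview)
Your argument has a genuine gap. You claim that the sequence $\T^{(N)}(pr_N(\x)\to pr_N(\y))$ is eventually constant and that the hypothesis $b_1<1$ is not needed. Both claims fail on the example $\y=\x$ (or more generally any $\y\in\W_\infty$ with $y_i=x_i$ for all large $i$). In that case the one-row configuration $\ell_N$ certainly exists: in the stable tail each path enters at $x_i$ and exits straight up at $y_i=x_i$, so the tail vertices are all of type $b_1$. Passing from $N$ to $N+1$ appends one more $b_1$-vertex, giving $\T^{(N+1)}=b_1\cdot\T^{(N)}$. The sequence is therefore strictly decreasing, not eventually constant, and the limit is $0$ precisely because $b_1<1$; with $b_1\ge 1$ the limit would be positive (or infinite), contradicting the second assertion of the lemma. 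So $b_1<1$ is essential, and your dichotomy ``either $\ell$ eventually appends $a_2$-vertices, or no $\ell$ exists'' is missing the third possibility ``$\ell$ eventually appends $b_1$-vertices.''

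The paper's proof handles exactly this point: once $\x$ has stabilized, every additional vertex in the tail is of type $a_2$ or $b_1$ (weights $1$ and $b_1<1$), so the sequence is eventually \emph{non-increasing} and hence convergent. Nonvanishing of the limit then forces only finitely many $b_1$-vertices, which is equivalent to $y_i=x_i+1$ for all large $i$. Your argument would be repaired by replacing ``eventually constant'' with ``eventually non-increasing'' and recognizing that the vanishing in the case $y_i=x_i$ comes from the infinite product of $b_1$'s, not from nonexistence of $\ell$.
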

\begin{proof} Using the definition \eqref{eq_def_T} of the matrices $\T^{(N)}$, we see that $\T^{(\infty)}$ is given by the
product of weights of the vertices in the one-row configuration. Since $\x\in\W_\infty$, there are
only finitely many vertices of types $a_1$, $b_2$, $c_1$, $c_2$ and therefore, the right-hand side
in \eqref{eq_infinite_transfer_matrix} is non-increasing for large $N$ and the limit in
\eqref{eq_infinite_transfer_matrix} exists. For this limit to be nonzero, we should have
$\T^{(N)}(pr_N(\x)\to pr_N(\y); 1,a_2,b_1,b_2,c_1,c_2)>0$ for all $N=1,2,\dots$ and also the
number of vertices of type $b_2$ should be finite. These two conditions imply $\y\in\W_\infty$ and
$y_i=x_i+1$ for all large enough $i$.
\end{proof}

As in Section \ref{Section_P_as_interacting}, we identify a $\P(b_1,b_2)$--random configuration of
the six--vertex model in the quadrant with a sequence $\x^{b_1,b_2}(t)$, $t=0,1,2,\dots$ of
elements of $\W_\infty$, such that $\x^{b_1,b_2}(t)$ encodes the positions of particles on the
horizontal line $y=t+1/2$. Now the definitions imply the following statement.

\begin{proposition}
\label{Proposition_T_infty}
 $\x^{b_1,b_2}(t)$ is a Markov chain with countable state space $\W_\infty$,
  transitional probabilities $\T^{(\infty)}(\x\to\y;1,1, b_1, b_2,1- b_1,1-
 b_2)$ and initial state  $\x^{b_1,b_2}(0)=(1,2,3,\dots)$.
\end{proposition}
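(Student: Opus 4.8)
The plan is to unwind the definitions on both sides and check that the sequential sampling procedure defining $\P(b_1,b_2)$ in Section~\ref{Section_P} computes exactly the same object as the transfer matrix product $\T^{(\infty)}(\x\to\y;1,1,b_1,b_2,1-b_1,1-b_2)$ applied to the step initial condition. Concretely, I would first observe that $\P(b_1,b_2)$ samples the vertex types of the quadrant configuration $\omega$ in order of increasing $x+y$, and that this sampling factors through the horizontal rows: knowing $\omega(x,y)$ for all $y\le t$ determines the bold south edges entering row $t+1$, i.e.\ it determines $\x^{b_1,b_2}(t)$, and conversely the restriction of $\omega$ to row $t+1$ is sampled conditionally on $\x^{b_1,b_2}(t)$ alone (no dependence on earlier rows beyond this). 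Hence $\x^{b_1,b_2}(0),\x^{b_1,b_2}(1),\dots$ is a Markov chain, and $\x^{b_1,b_2}(0)=(1,2,3,\dots)$ is immediate from the boundary conditions (south edges bold at every bottom vertex, no west edges bold on the left boundary, so every bottom-row vertex is of type $c_1$ or $b_1$ and the paths emanate from $1,2,3,\dots$).

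Second, I would match the one-step transition kernel. Fix a row and condition on the configuration $\x\in\W_\infty$ of bold south edges entering it. Sampling that row left-to-right under $\P(b_1,b_2)$ is: at each site, if the incoming west and south states force the vertex type, use it; otherwise choose between $b_1/c_1$ with probabilities $b_1,1-b_1$ (when the south edge is bold and west is not) or between $b_2/c_2$ with probabilities $b_2,1-b_2$ (when the west edge is bold and south is not). But this is precisely the stochastic description of applying the transfer matrix $\T^{(N)}$ in the normalized weights $(1,1,b_1,b_2,1-b_1,1-b_2)$ given in the proof of Proposition~\ref{Proposition_normalization} (see the discussion around \eqref{eq_stoch_condition_2}): there it is shown that with these weights $\T^{(N)}$ is stochastic and that the weight $\T^{(N)}(\x\to\y)$ equals the probability of producing the unique one-row configuration $\ell$ sending $\x$ to $\y$ by exactly this left-to-right sampling. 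Taking $N\to\infty$ and invoking Lemma~\ref{lemma_T_infty}, the infinite-row sampling under $\P(b_1,b_2)$ produces $\y=\x^{b_1,b_2}(t+1)$ from $\x=\x^{b_1,b_2}(t)$ with probability $\T^{(\infty)}(\x\to\y;1,1,b_1,b_2,1-b_1,1-b_2)$, and moreover $\y\in\W_\infty$ almost surely (only finitely many non-$a_2$ vertices occur in each row, by the boundary-condition count already used in the proof of Proposition~\ref{Proposition_measure}), which is what keeps the state space countable.

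The one point requiring a little care — and the main obstacle, such as it is — is the interchange of limits and the well-definedness of the infinite sampling procedure: $\P(b_1,b_2)$ is defined by an $x+y\to\infty$ sweep over the whole quadrant, whereas the transfer-matrix picture builds the configuration row by row with each row an infinite left-to-right sweep. I would justify the reorganization by noting that the type of $\omega(x,y)$ depends only on $\omega(x-1,y)$ and $\omega(x,y-1)$, so for any finite window the two sampling orders induce the same conditional laws, hence the same finite-dimensional marginals; combined with the almost-sure finiteness of non-$a_2$ vertices in each row (so that each row-sweep terminates in the trivial way, consistently with $\W_\infty$), this shows the row-by-row Markovian description is genuinely equivalent to Definition~\ref{Definition_P}. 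Once this bookkeeping is in place, the identification of the transition kernel with $\T^{(\infty)}$ is just the content of Proposition~\ref{Proposition_normalization} and Lemma~\ref{lemma_T_infty}, and the proposition follows.
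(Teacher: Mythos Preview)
Your proposal is correct and is precisely the unpacking of what the paper leaves implicit: the paper gives no proof at all, writing only ``Now the definitions imply the following statement'' before the proposition. Your argument---factoring the diagonal sampling of $\P(b_1,b_2)$ into row-by-row sampling, identifying each row with the stochastic transfer matrix of Proposition~\ref{Proposition_normalization}, and invoking Lemma~\ref{lemma_T_infty} for the $N\to\infty$ limit---is exactly the intended reading of that sentence.
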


\subsection{Eigenvectors of transfer matrix}

An important property of transfer matrices $\T^{(N)}$ is the following eigenrelation.

\begin{theorem} \label{theorem_eigenrelation}
Fix an integer $N>0$ and $N$ small complex numbers $z_1,\dots,z_N$, such that $|b_2 z_i|<1$ for
$1\le i \le N$ and $1-\frac{a_2+b_1b_2-c_1c_2}{b_1} z_{j} + \frac{a_2 b_2}{b_1} z_{i} z_{j}\ne 0$
for $1\le i<j \le N$. For a permutation $\sigma\in {\mathfrak S}(N)$ set
$$
A_\sigma= (-1)^\sigma \prod_{i<j} \frac{1-\frac{a_2+b_1b_2-c_1 c_2}{b_1} z_{\sigma(i)} + \frac{a_2
b_2}{b_1} z_{\sigma(i)} z_{\sigma(j)}}{1-\frac{a_2+b_1b_2-c_1c_2}{b_1} z_{i} + \frac{a_2 b_2}{b_1}
z_{i} z_{j}}.
$$

Then the function (of $\x=(x_1<\dots<x_N)\in \W_N$)
$$
 \Psi(x_1,\dots,x_N; z_1,\dots,z_N)=\sum_{\sigma\in {\mathfrak S}(N)} A_\sigma \prod_{i=1}^N z_{\sigma(i)}^{x_i}
$$
is an eigenfunction of the transfer matrix  $\T^{(N)}(\x\to\y; 1,a_2,b_1,b_2,c_1,c_2)$ of the
six--vertex model, that is:
\begin{multline}
 \sum_{\y\in\W_N}  \T^{(N)}(\x\to\y; 1,a_2,b_1,b_2,c_1,c_2) \Psi(\y; z_1,\dots,z_N)\\= \left(\prod_{i=1}^N
 \frac{b_1+(c_1c_2-b_1b_2)z_i}{1-b_2z_i}\right) \Psi(\x; z_1,\dots,z_N).
\end{multline}
\end{theorem}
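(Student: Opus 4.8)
The plan is to verify the eigenrelation directly, by the Bethe ansatz coordinate method, exploiting the fact that the transfer matrix $\T^{(N)}$ acts on functions of $\x \in \W_N$ through an explicitly describable one-row sum. First I would record the ``free'' version of the computation: if one relaxes the ordering constraint in $\W_N$ and treats the $N$ particles as independent, then each summand $z^{x_1}_{\sigma(1)}\cdots z^{x_N}_{\sigma(N)}$ is an eigenfunction of the product of $N$ single-particle transfer operators, with eigenvalue $\prod_i \lambda(z_{\sigma(i)})$ where $\lambda(z) = b_1 + b_2^{-1}\cdot(\text{geometric tail})\cdots$; the one-particle computation already appears in the proof of Proposition~\ref{Proposition_normalization} (the $N=1$ case: a particle stays with weight $c_1c_2$, or moves $k$ steps with weight $b_2^{k-1}c_1c_2$, plus the $b_1$-term), and summing the geometric series against $z^{y_1}$ gives exactly $\lambda(z) = \dfrac{b_1+(c_1c_2-b_1b_2)z}{1-b_2 z}$. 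So the single-particle eigenvalue matches the claimed product; the content of the theorem is that the antisymmetrized combination $\Psi$ with the specific two-body coefficients $A_\sigma$ is an honest eigenfunction of the \emph{interacting} transfer matrix, i.e. of the transfer matrix restricted to ordered configurations with the genuine six-vertex local rules at collisions.

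The key step is therefore the two-particle case $N=2$, which already forces the form of $A_\sigma$ and after which the general case follows by the standard ``pairwise consistency'' argument. I would take the explicit list of nonzero two-particle transitions $\x=(x_1<x_2)\to\y=(y_1<y_2)$ written out in the proof of Proposition~\ref{Proposition_normalization} (the five scenarios), apply $\T^{(2)}$ to the trial function $z_{\sigma(1)}^{y_1}z_{\sigma(2)}^{y_2}$ for each of the two permutations, and sum the resulting geometric series in $y_1$ and $y_2$ over the ranges $y_1\in\{x_1,\dots,x_2-1\}\cup\{x_2,x_2+1,\dots\}$ and $y_2\in\{x_2,x_2+1,\dots\}$ (with the constraint $y_1<y_2$ when $y_1\ge x_2$). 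The result will split into a ``bulk'' term proportional to $\lambda(z_{\sigma(1)})\lambda(z_{\sigma(2)})\,z_{\sigma(1)}^{x_1}z_{\sigma(2)}^{x_2}$ plus a ``boundary'' term proportional to $b_2^{x_2-x_1}$ (or $b_2^{x_2-x_1-1}$) times a residual factor; demanding that the boundary terms from the two permutations cancel in $\Psi=A_{\mathrm{id}}(\cdots)+A_{(12)}(\cdots)$ produces precisely the ratio
$$
\frac{A_{(12)}}{A_{\mathrm{id}}}\;=\;-\,\frac{1-\frac{a_2+b_1b_2-c_1c_2}{b_1}\,z_1 + \frac{a_2 b_2}{b_1}\,z_1 z_2}{1-\frac{a_2+b_1b_2-c_1c_2}{b_1}\,z_2 + \frac{a_2 b_2}{b_1}\,z_1 z_2},
$$
matching the definition of $A_\sigma$ (the $S$-matrix), and the nondegeneracy hypothesis $1-\frac{a_2+b_1b_2-c_1c_2}{b_1}z_j+\frac{a_2 b_2}{b_1}z_i z_j\ne 0$ is exactly what makes these ratios well-defined. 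The convergence of all geometric series is guaranteed by $|b_2 z_i|<1$.

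For general $N$ I would argue as follows. The interacting transfer matrix $\T^{(N)}$ has the property that it is ``local'' in the sense that collisions involve only two neighboring particles at a time: tracing a one-row configuration $\ell$ sending $\x$ to $\y$, between consecutive particles the dynamics is the free geometric motion, and the only place where the weight deviates from the free product is at a site where two bold lines meet, which — as in the five-scenario analysis — is governed by the same two-body rule as in the $N=2$ case. Consequently, acting with $\T^{(N)}$ on $\sum_\sigma A_\sigma \prod_i z_{\sigma(i)}^{x_i}$ and pushing the sum inside, each adjacent transposition of the $z$'s in a term gets accompanied by exactly the two-body $S$-matrix factor that, combined with the sign in $A_\sigma$, makes the boundary contributions telescope/cancel pairwise, leaving only the diagonal action with eigenvalue $\prod_i \lambda(z_i)$ (which is symmetric in the $z_i$, hence independent of $\sigma$). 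Making ``pushing the sum inside and resumming'' rigorous when $N\ge 3$ — i.e. checking that no genuinely three-body boundary term survives, so that two-body consistency of the $A_\sigma$ really suffices — is the main obstacle; it is the familiar subtlety of the coordinate Bethe ansatz, and I expect it to follow from a careful but essentially bookkeeping induction on $N$ together with the observation that a one-row six-vertex configuration never has three bold lines pairwise colliding at adjacent sites in a way not already decomposable into successive two-body events. Once the boundary terms are controlled, collecting the diagonal part finishes the proof.
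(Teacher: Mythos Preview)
Your strategy is exactly the paper's --- coordinate Bethe ansatz: compute the one-particle eigenvalue, fix the two-body $S$-matrix by cancelling the boundary term in the $N=2$ calculation, then argue that for general $N$ only two-body collisions matter. The $N=1$ and $N=2$ discussion is fine (though your displayed ratio $A_{(12)}/A_{\mathrm{id}}$ has numerator and denominator swapped relative to the definition of $A_\sigma$; check this against the paper's $s_{ij}$ in (\ref{smatr})).

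The gap is precisely where you flag it: the general $N$ step. You write that it should follow from ``a careful but essentially bookkeeping induction'' and from the fact that three bold lines never collide in a way not decomposable into two-body events. That heuristic is the physicist's reason the ansatz works, but it is not a proof, and in this model the bookkeeping is not routine. Concretely, when you sum $\T^{(N)}$ against $\prod_i z_{\sigma(i)}^{y_i}$, summing over $y_i$ first does \emph{not} simply reproduce the action on the remaining $N-1$ variables: the case $y_i=x_{i+1}$ (particle $i$ pushes particle $i+1$) forbids $y_{i+1}=x_{i+1}$ in the subsequent sum, so the recursion for $g_i=\sum_{y_i,\ldots,y_N}\T\cdot z_i^{y_i}\cdots z_N^{y_N}$ is genuinely second-order,
\[
g_i = \bigl(L_i z_i^{x_i}+M_i b_2^{x_{i+1}-x_i}z_i^{x_{i+1}}\bigr)\,g_{i+1}\;-\;\tfrac{a_2 b_1}{b_2}\,b_2^{x_{i+1}-x_i}(z_i z_{i+1})^{x_{i+1}}\,g_{i+2},
\]
not first-order. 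Iterating this gives $g_1$ as a sum over words in an alphabet $\{L,M,DD\}$, and the unwanted terms are not just the two-body $M_i L_{i+1}$ pieces but also the $DD$ pieces from the $g_{i+2}$ term. The paper kills them together by pairing each word containing $ML$ with the word obtained by replacing the first $ML$ by $DD$ (and vice versa), and checking that after symmetrizing over $\sigma\leftrightarrow(i,i+1)\sigma$ the paired contributions cancel via the identity $s_{\sigma(i)\sigma(i+1)}\bigl(M_{\sigma(i)}L_{\sigma(i+1)}-\tfrac{a_2 b_1}{b_2}\bigr)$ antisymmetric in $\sigma(i)\leftrightarrow\sigma(i+1)$. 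This combinatorial bijection is the missing idea; without it your induction has no inductive step, because the ``boundary'' terms you need to cancel at stage $i$ involve $g_{i+2}$ and are not of the same shape as those at stage $i+1$.
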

\begin{proof}
A version of this result, when all $a_i,b_i,c_i$ parameters are fixed to be $1$ is given in
\cite{Lieb} (see also \cite{Nolden} for a detailed exposition of a more general case). We follow
the general approach therein, though work directly on $\Z$ (i.e.\ not on $\Z/n\Z$), thus avoiding
dealing with the Bethe equations. The periodic boundary condition asymmetric transfer matrix is
also diagonalized in \cite{JS} via the algebraic Bethe ansatz. A careful translation of that
result into coordinate form and infinite-volume limit should yield an alternative approach (than
that which we take below) to proving this result. Yet another approach is to start from the
formula for the case $a_1=a_2$, $b_1=b_2$, $c_1=c_2$, which can be found e.g.\ in \cite[Section
8]{Baxter} and then reduce a general case to it by conjugations of the transfer matrix and
multiplications by constants.

\medskip

For $1\leq i \leq N$ define
\begin{multline}
\label{eq_x12} g_i(x_i,\ldots,x_N;z_i,\ldots, z_N) \\= \sum_{y_i,\ldots, y_N}
\T^{(N-i+1)}\big((x_i,\ldots,x_N)\to (y_i,\ldots, y_N);1,a_2,b_1,b_2,c_1,c_2\big) z_i^{y_i}\cdots
z_N^{y_N}.
\end{multline}
 In order to simplify the notations, here and below we adopt the conventions
 $g_{N+1}\equiv 1$, $g_{N+2}\equiv 0$ and $x_{N+1}\equiv
+\infty$.

 We may perform the summation over $y_i\in \{x_i,x_i+1,\ldots, x_{i+1}\}$ in \eqref{eq_x12} in order to develop a
recursion for the functions $g_i$ in $i$.  There are three cases to consider in summing over $y_i$
and hence we write
\begin{multline*}
g_i(x_i,\ldots,x_N;z_i,\ldots, z_N)  = g^{(1)}_i(x_i,\ldots,x_N;z_i,\ldots,
z_N)+g^{(2)}_i(x_i,\ldots,x_N;z_i,\ldots, z_N)\\+g^{(3)}_i(x_i,\ldots,x_N;z_i,\ldots, z_N).
\end{multline*}
In case (1) we have $y_i=x_i$ and  find that
\begin{equation}\label{g1}
g^{(1)}_i(x_i,\ldots,x_N) = b_1 z_i^{x_i} g_{i+1}(x_{i+1},\ldots, x_N;z_{i+1},\ldots, z_N).
\end{equation}
The factor $b_1$ came from the $b_1$ vertex at position $x_i$. Case (2) involves summing over
$y_i\in \{x_i+1,\ldots, x_{i+1}-1\}$ and keeping track of the geometric sum of the associated
weights yields
\begin{multline}\label{g2}
g^{(2)}_i(x_i,\ldots,x_N;z_{i},\ldots, z_N) \\= \frac{c_1 c_2 z_i}{1-b_2 z_i} \big(z_i^{x_i} -
b_2^{x_{i+1}-x_i-1} z_i^{x_{i+1}-1}\big)  g_{i+1}(x_{i+1},\ldots, x_N;z_{i+1},\ldots, z_N).
\end{multline}
Note that the condition that $|b_2z_N|<1$ enables us to perform the geometric summations also for
$i=N$ leading to the same formula \eqref{g2} with $b_2^{x_{N+1}} z_N^{x_{N+1}}$ being understood
as $0$ (which agrees with our convention $x_{N+1}=+\infty$).

In case (3) we have $y_{i}=x_{i+1}$. This means that when we subsequently sum over $y_{i+1}$, we
cannot allow the term $y_{i+1}= x_{i+1}$. This implies that
\begin{eqnarray*}
&&g^{(3)}_i(x_i,\ldots,x_N;z_{i},\ldots, z_N) \\
&&= a_2 b_2^{x_{i+1}-x_i-1} z_{i}^{x_{i+1}}\big(g_{i+1}(x_{i+1},\ldots,x_N;z_{i+1},\ldots, z_N)-g^{(1)}_{i+1}(x_{i+1},\ldots,x_N;z_{i+1},\ldots, z_N)\big).
\end{eqnarray*}
Applying (\ref{g1}) to the above, we find that
\begin{eqnarray}\label{g3}
&&g^{(3)}_i(x_i,\ldots,x_N;z_{i},\ldots, z_N) \\
\nonumber&&= a_2 b_2^{x_{i+1}-x_i-1} z_{i}^{x_{i+1}} g_{i+1}(x_{i+1},\ldots,x_N;z_{i+1},\ldots, z_N)\\
\nonumber&&\hskip.2in- a_2 b_1 b_2^{x_{i+1}-x_i-1} \big(z_i z_{i+1}\big)^{x_{i+1}} g_{i+2}(x_{i+2},\ldots,x_N;z_{i+2},\ldots, z_N).
\end{eqnarray}
We again note that due to our conventions $x_{N+1}=+\infty$, $g_{N+2}=0$ and because of
$|b_2z_N|<1$, the formula \eqref{g3} is still valid for $i=N$.

Combining (\ref{g1}),(\ref{g2}), and (\ref{g3}) yields the recursion relation
\begin{eqnarray}\label{rr}
&&g_i(x_i,\ldots,x_N;z_{i},\ldots, z_N) \\
\nonumber&&= \big(L_i z_i^{x_i} + M_i b_2^{x_{i+1}-x_{i}} z_i^{x_{i+1}}\big) g_{i+1}(x_{i+1},\ldots, x_N;z_{i+1},\ldots, z_N)\\
\nonumber&&\hskip.2in - \frac{a_2b_1}{b_2} b_2^{x_{i+1}-x_i} \big(z_i z_{i+1}\big)^{x_{i+1}} g_{i+2}(x_{i+2},\ldots, x_N;z_{i+2},\ldots, z_N),
\end{eqnarray}
where
\begin{equation}\label{LMs}
L_i = \frac{b_1 + (c_1c_2 - b_1b_2)z_i}{1-b_2 z_i}, \qquad \textrm{and} \qquad M_i =
\frac{a_2-c_1c_2 - a_2 b_2 z_i}{b_2(1-b_2 z_i)}.
\end{equation}
This, along with the boundary conditions $g_{N+1}\equiv 1$, $g_{N+2}\equiv 0$ and $x_{N+1}\equiv
+\infty$ determines the value of all $g_i$.

The above recursion shows that $z_1^{y_1}\cdots z_N^{y_N}$ is not an eigenfunction for the
transfer matrix. However, since it provides such a relatively explicit formula for the action of
the transfer matrix on such monomials, one might hope that for suitably chosen coefficients
$S_{\sigma}$ (independent of the $y_i$'s but possibly dependent on the $z_i$'s) the sum
$\sum_{\sigma\in{\mathfrak S}(N)} S_\sigma z_{\sigma(1)}^{y_1}\cdots z_{\sigma(N)}^{y_N}$ may be
an eigenfunction of the transfer matrix. Let us see how this is born out in the example of $N=2$
before going to the general case. The recursion implies
$$
g_1(x_1,x_2;z_1,z_2) = \big(L_1 z_1^{x_1} + M_1 b_2^{x_2-x_1} z_1^{x_2}\big) L_2 z_2^{x_2} - \frac{a_2b_1}{b_2} b_2^{x_{2}-x_1} \big(z_1 z_{2}\big)^{x_{2}}.
$$
Therefore we find that
\begin{multline}
\label{eq_x15}
\sum_{y_1,y_2} \T^{(2)}\big((x_1,x_2)\to (y_1,y_2);1,a_1,b_1,b_2,c_1,c_2\big) \big(S_{12} z_1^{x_1}z_2^{x_2} + S_{21} z_2^{x_1}z_1^{x_2}\big)\\
=S_{12}g_1(x_1,x_2;z_1,z_2) + S_{21} g_1(x_1,x_2;z_2,z_1) = L_1 L_2\big(S_{12} z_1^{x_1}z_2^{x_2}
+ S_{21} z_2^{x_1}z_1^{x_2}\big) \\ + \left(S_{12} \left(M_1 L_2 - \frac{a_2b_1}{b_2}\right)+
S_{21} \left(M_2 L_1 - \frac{a_2b_1}{b_2}\right)\right) b_2^{x_2-x_1}(z_1z_2)^{x_2}.
\end{multline}
Here we have used one-line notation for permutations. If only the last term above were zero, then
we would have an eigenfunction (with eigenvalue $L_1L_2$). Observe that
$$
M_1 L_2 - \frac{a_2b_1}{b_2} = \frac{-c_1 c_2}{b_2(1-b_2z_1)(1-b_2z_2)} s_{12}
$$
where
\begin{equation}\label{smatr}
s_{ij} = b_1 - (a_2+b_1 b_2-c_1 c_2) z_j + a_2b_2 z_i z_j,
\end{equation}
and the rest of the expression is symmetric in $z_1$ and $z_2$. It readily follows that last line
\eqref{eq_x15} is zero if $S_{12} = s_{21}$ and $S_{21} = -s_{12}$. This eigenfunction corresponds
to the claimed one from the theorem, up to scaling by an overall function of the $z_i$'s (which
does not involve the $y_i$'s and hence has no effect).

In order to proceed with the general $N$ proof, it is convenient to develop an expansion for
$g_1(x_1,\ldots,x_N;z_1,\ldots,z_N)$ coming from the recursion relation \eqref{rr}. Let $W$ be the
set of $N$-letter words $w$ composed of $L$, $M$ or the pair $DD$ subject to the condition that
that the $N$-th letter $w_N\neq M$. Some examples of word $w\in W$ are $LLMDD$ or $LDDDD$, though
$LDMLL$, $LDDDL$ and $MLDDM$ are not in $W$. The weight of a word $w\in W$ is defined as the
product of weights corresponding to each letter $L$, $M$, or the pair $DD$ in $w$. The weight of
letter $L$ in position $i$ is $L_i z_i^{x_i}$, the weight of letter $M$ in position $i$ is $M_i
b_2^{x_{i+1}-x_i}z_i^{x_{i+1}}$ and the weight of the pair $DD$ in positions $i$ and $i+1$ is
$-\frac{a_2b_1}{b_2} b_2^{x_{i+1}-x_{i}} \big(z_i z_{i+1}\big)^{x_{i+1}}$. We write $WT(w)$ for
this weight. For example, if $w=LMDDL$ then
$$WT(w) = L_1 z_1^{x_1} \cdot M_2 b_2^{x_{3}-x_2}z_2^{x_{3}} \cdot \Big(-\frac{a_2b_1}{b_2} b_2^{x_{4}-x_{3}} \big(z_3 z_{4}\big)^{x_{4}}\Big) L_5 z_5^{x_5}.$$
With this notation, the recursion relation \eqref{rr} implies that
$$g_1(x_1,\ldots, x_N;z_1,\ldots, z_N) = \sum_{w\in W} WT(w).$$
Let us fix two additional pieces of notation. For a permutation $\sigma$, let $WT_{\sigma}(w)$ be
the weight of $w$ where all $z_1,\ldots,z_N$ are replaced by $z_{\sigma(1)},\ldots,
z_{\sigma(N)}$, and for $1\leq i<N$ let $WT^{(i,i+1)}(w)$ be the weight of $w$ where those terms
corresponding to positions $i$ and $i+1$ are excluded from the product (this assumes that $DD$ is
not located in positions $i-1$ and $i$, or $i+1$ and $i+2$, in which case such exclusion is not
well-defined).

Consider the sum
$$
\sum_{\sigma\in{\mathfrak S}(N)} S_{\sigma} g_1(x_1,\ldots,x_N;z_1,\ldots,z_N),\qquad
\textrm{with}\qquad S_{\sigma} = (-1)^{\sigma} \prod_{i<j} s_{\sigma(j)\sigma(i)}.
$$
We will see that this sum is equal to $\sum_{\sigma} S_{\sigma} WT_{\sigma}(L^N)$. Since
$A_\sigma=\frac{S_\sigma}{S_{Id}}$, this clearly will prove Theorem \ref{theorem_eigenrelation}.

In order to show that only the word $w=L^N$ contributes to this sum, it is useful to split $W$
into three disjoint sets $W_1$, $W_2$ and $W_3$. The set $W_1$ is all words $w\in W$ which contain
$DD$ before any occurrence of the string $ML$; the set $W_2$ is all words $w\in W$ which contain
$ML$ before any occurrence of the string $DD$; and the set $W_3$ is all other words. It is easy to
see that $W_3$ contains exactly one element, namely $L^N$. This is because $w\in W_3$ must not
contain the string $DD$ or $ML$. Thus, it must take the form $L^k M^{N-k}$, but since words in $W$
cannot end with $M$, we must have $k=N$. The sets $W_1$ and $W_2$ are in bijection with each
other. This is realized by replacing the first occurrence of the string $DD$ in a word $w\in W_1$
by $ML$, and likewise replacing the first occurrence of the string $ML$ in a word $w\in W_2$ by
$DD$. For $w\in W_1$ let $\iota w\in W_2$ represent the corresponding word.

Thus, we have
\begin{multline*}
\sum_{\sigma\in{\mathfrak S}(N)} S_{\sigma} g_1(x_1,\ldots,x_N;z_1,\ldots,z_N) =  \sum_{w\in W} \sum_{\sigma\in{\mathfrak S}(N)} S_{\sigma} WT_{\sigma}(w)\\
=  \sum_{\sigma\in{\mathfrak S}(N)} S_{\sigma} WT_{\sigma}(L^N) + \sum_{w\in
W_1}\sum_{\sigma\in{\mathfrak S}(N)} S_{\sigma} \big(WT_{\sigma}(w)+WT_{\sigma}(\iota w)\big).
\end{multline*}
Fix $w\in W_1$ and let $i,i+1$ be the location of the first occurrence of the string $DD$ in $w$.
Consider the inner sum from above
\begin{multline*}
\sum_{\sigma\in{\mathfrak S}(N)} S_{\sigma} \big(WT_{\sigma}(w)+WT_{\sigma}(\iota w)\big) =
\sum_{\sigma\in{\mathfrak S}(N)}(-1)^{\sigma}\prod_{\substack{k<\ell\\(k,\ell)\neq (i,i+1)}}
s_{\sigma(\ell)\sigma(k)} WT_{\sigma}^{(i,i+1)}(w) \\ \times
\left(s_{\sigma(i)\sigma(i+1)}\left(M_{\sigma(i)}L_{\sigma(i+1)} -
\frac{a_1b_1}{b_2}\right)\right)
b_2^{x_{i+1}-x_i}\big(z_{\sigma(i)}z_{\sigma(i+1)}\big)^{x_{i+1}}.
\end{multline*}
Replacing $\sigma$ by $(i,i+1)\sigma$ (where $(i,i+1)$ is the transposition of $i$ and $i+1$) does
not change the value of the above sum. This transposition only changes the term
$s_{\sigma(i)\sigma(i+1)}\big(M_{\sigma(i)}L_{\sigma(i+1)} - \frac{a_1b_1}{b_2}\big)$ to
$s_{\sigma(i+1)\sigma(i)}\big(M_{\sigma(i+1)}L_{\sigma(i)} - \frac{a_1b_1}{b_2}\big)$ and changes
the sign of $(-1)^{\sigma}$ in the above expression. Therefore
\begin{multline*}
\sum_{\sigma\in{\mathfrak S}(N)} S_{\sigma} \big(WT_{\sigma}(w)+WT_{\sigma}(\iota w)\big) =
\frac{1}{2} \sum_{\sigma\in{\mathfrak S}(N)}(-1)^{\sigma}\prod_{\substack{k<\ell\\(k,\ell)\neq
(i,i+1)}} s_{\sigma(\ell)\sigma(k)}  WT_{\sigma}^{(i,i+1)}(w) b_2^{x_{i+1}-x_i}\\ \times
\big(z_{\sigma(i)}z_{\sigma(i+1)}\big)^{x_{i+1}}
\left(s_{\sigma(i)\sigma(i+1)}\left(M_{\sigma(i)}L_{\sigma(i+1)} - \frac{a_1b_1}{b_2}\right)-
s_{\sigma(i+1)\sigma(i)}\left(M_{\sigma(i+1)}L_{\sigma(i)} - \frac{a_1b_1}{b_2}\right)\right) .
\end{multline*}
However, as we have already observed in the case $N=2$, the expression
$$
s_{\sigma(i)\sigma(i+1)}\left(M_{\sigma(i)}L_{\sigma(i+1)} - \frac{a_1b_1}{b_2}\right)-
s_{\sigma(i+1)\sigma(i)}\left(M_{\sigma(i+1)}L_{\sigma(i)} - \frac{a_1b_1}{b_2}\right) =0
$$
by virtue of the definition of $s_{ij}$ in (\ref{smatr}). This shows that
$$
\sum_{\sigma\in{\mathfrak S}(N)} S_{\sigma} g_1(x_1,\ldots,x_N;z_1,\ldots,z_N) =
\sum_{\sigma\in{\mathfrak S}(N)} S_{\sigma} WT_{\sigma}(L^N),
$$
which implies the claimed result of Theorem \ref{theorem_eigenrelation}.
\end{proof}

\begin{corollary}
\label{corollary_eigenrelation_T} Fix an integer $N>0$ and complex numbers $z_1,\dots,z_N$, such
that $|b_2 z_i|<1$ for $1\le i \le N$ and $1-\frac{a_2+b_1b_2-c_1c_2}{b_1} z_{j} + \frac{a_2
b_2}{b_1} z_{i} z_{j}\ne 0$ for $1\le i<j \le N$. For a permutation $\sigma\in {\mathfrak S}(N)$
set
$$
A'_\sigma= (-1)^\sigma \prod_{i<j} \frac{1-\frac{a_2+b_1b_2-c_1 c_2}{b_1} z_{\sigma(j)} + \frac{a_2
b_2}{b_1} z_{\sigma(i)} z_{\sigma(j)}}{1-\frac{a_2+b_1b_2-c_1c_2}{b_1} z_{j} + \frac{a_2 b_2}{b_1}
z_{i} z_{j}}.
$$
Then the function (of $X=(x_1<\dots<x_N)\in\W_N$)
$$
 \Phi(x_1,\dots,x_N; z_1,\dots,z_N)=\sum_{\sigma\in{\mathfrak S}(N)} A'_\sigma \prod_{i=1}^N z_{\sigma(i)}^{-x_i}
$$
is an eigenfunction of the \emph{transposed} transfer matrix of the six--vertex model with
parameters $a_1=1,a_2,$ $b_1,b_2$, $c_1,c_2$, that is:
\begin{multline*}
 \sum_{\x\in\W_N} \T^{(N)}(\x\to\y; 1,a_2,b_1,b_2,c_1,c_2) \Phi(\x; z_1,\dots,z_N)\\= \left(\prod_{i=1}^N
 \frac{b_1+(c_1c_2-b_1b_2)z_i}{1-b_2z_i}\right) \Phi(\y; z_1,\dots,z_N).
\end{multline*}
\end{corollary}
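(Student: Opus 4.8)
The plan is to deduce Corollary~\ref{corollary_eigenrelation_T} directly from Theorem~\ref{theorem_eigenrelation} by a transposition/inversion trick, rather than redoing the entire Bethe-ansatz computation. The key observation is that the transposed matrix $\bigl(\T^{(N)}\bigr)^{\mathsf T}$ acting on a function $\Phi$, as it appears in the statement, is the same kind of object as $\T^{(N)}$ acting on a function $\Psi$ in Theorem~\ref{theorem_eigenrelation}, once one also replaces each variable $z_i$ by its reciprocal. Concretely, I would first recall that a one-row configuration $\ell\in\mathcal L$ sending $\x$ to $\y$ has a natural ``flip'' obtained by reflecting the row left-to-right; under this reflection the bold south lines stay south, bold north lines stay north, and the types of vertices get permuted within each pair $(a_1,a_2)$, $(b_1,b_2)$, $(c_1,c_2)$ in a controlled way (indeed, left-right reflection swaps $c_1\leftrightarrow c_2$ and fixes the others up to relabeling the endpoints by $x_i\mapsto -x_i$, $y_i\mapsto -y_i$ with order reversed). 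Since $\T^{(N)}$ depends on $c_1,c_2$ only through $c_1 c_2$ (the two $c$-vertices always come in pairs, as used repeatedly in Section~\ref{Section_transfer}), this reflection gives an identity of the form $\T^{(N)}(\x\to\y)=\T^{(N)}(-\overleftarrow{\y}\to -\overleftarrow{\x})$, where $-\overleftarrow{\x}$ denotes the element of $\W_N$ obtained by negating and reversing the coordinates of $\x$.

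Using this, I would then write the left-hand side of the claimed identity, $\sum_{\x\in\W_N}\T^{(N)}(\x\to\y)\,\Phi(\x;z_1,\dots,z_N)$, and substitute $\x\mapsto -\overleftarrow{\x}$, $\y\mapsto -\overleftarrow{\y}$. The matrix entry becomes $\T^{(N)}(\overleftarrow{\y}'\to\overleftarrow{\x}')$ for appropriate relabeled indices, i.e.\ exactly a matrix entry of the untransposed transfer matrix, summed over its second argument. Meanwhile $\Phi(x_1,\dots,x_N;z_1,\dots,z_N)=\sum_\sigma A'_\sigma\prod_i z_{\sigma(i)}^{-x_i}$ turns, after the substitution $x_i\mapsto -x_{N+1-i}$, into $\sum_\sigma A'_\sigma\prod_i z_{\sigma(i)}^{x_{N+1-i}}$; re-indexing $i\mapsto N+1-i$ and composing $\sigma$ with the longest permutation $w_0$ converts this into a sum of the same shape as $\Psi(x_1,\dots,x_N;\,\cdot\,)$ but with the variables $z_j$ replaced by $z_j$ (no reciprocal is needed here because the reflection already negated the exponents). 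The coefficients $A'_\sigma$ are designed precisely so that after this relabeling they match the coefficients $A_\sigma$ of Theorem~\ref{theorem_eigenrelation}: note that $A'_\sigma$ is obtained from $A_\sigma$ by swapping the roles of $i$ and $j$ in the $s$-factors, which is exactly what conjugation by $w_0$ does to $A_\sigma$. Hence the left-hand side equals the result of applying the (untransposed) transfer matrix to a function of the $\Psi$-type, and Theorem~\ref{theorem_eigenrelation} identifies it as the eigenvalue $\prod_i \frac{b_1+(c_1c_2-b_1b_2)z_i}{1-b_2 z_i}$ times that same function, which upon undoing all substitutions is exactly $\Phi(\y;z_1,\dots,z_N)$. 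Since the eigenvalue is symmetric in the $z_i$'s, it is unaffected by the permutation relabeling, so it comes out of the argument unchanged.

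The main obstacle, and the step I would be most careful about, is bookkeeping the reflection bijection on one-row configurations precisely: one must verify that left-right reflection of an element of $\mathcal L$ is again an element of $\mathcal L$ sending $-\overleftarrow{\y}$ to $-\overleftarrow{\x}$ with the vertex-type multiset preserved up to the $c_1\leftrightarrow c_2$ swap that $\T^{(N)}$ does not see. This requires staring at Figure~\ref{Figure_weights} to confirm which of the six pictures is carried to which under horizontal reflection — the $a_1,a_2$ types (empty and doubly-bold through-vertices) are reflection-symmetric, the two $b$ types (turning corners) are also individually reflection-symmetric once south/north are preserved, and the two $c$ types get interchanged. A secondary technical point is tracking the effect of the substitution and the permutation relabeling on the prefactors $A_\sigma$ versus $A'_\sigma$: here one checks that replacing $\sigma$ by $w_0\sigma$ (equivalently, reversing the order in which the $z$'s are assigned) sends the product $\prod_{i<j} s_{\sigma(i)\sigma(j)}$-type expression in $A_\sigma$ to the $\prod_{i<j}s_{\sigma(j)\sigma(i)}$-type expression in $A'_\sigma$, and that the sign $(-1)^\sigma$ versus $(-1)^{w_0\sigma}$ difference is absorbed into the constant normalization (which, as noted in the proof of the theorem, is irrelevant since it does not involve the $x_i$'s). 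Once these two combinatorial verifications are in place, the proof is a short formal manipulation and no new analytic input beyond Theorem~\ref{theorem_eigenrelation} is needed.
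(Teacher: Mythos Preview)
Your approach is essentially the paper's approach: reduce the eigenrelation for the transposed transfer matrix to Theorem~\ref{theorem_eigenrelation} via a geometric symmetry of one-row configurations, then match $\Phi$ to $\Psi$ by reversing the order of coordinates (conjugation by the longest permutation $w_0$). The algebraic bookkeeping you outline for $A_\sigma$ versus $A'_\sigma$ is also the right one, and agrees with what the paper does.

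There is, however, a genuine error in your geometric step. You claim that the relevant symmetry is \emph{horizontal} (left--right) reflection alone, that under it ``bold south lines stay south, bold north lines stay north,'' and that it swaps $c_1\leftrightarrow c_2$ while fixing the other types. This is not correct. With the conventions of Figure~\ref{Figure_weights}, $c_1$ has bold south and east edges; horizontal reflection sends this to a vertex with bold south and west edges, which is not among the six allowed types at all. (Similarly, your description of the $b$-types as ``turning corners'' is off: $b_1,b_2$ are the straight-through vertices; the corners are the $c$-types.) Moreover, horizontal reflection by itself keeps south edges south and north edges north, so it would give $\T^{(N)}(\x\to\y)=\T^{(N)}(-\overleftarrow{\x}\to -\overleftarrow{\y})$ with \emph{no} swap of the input and output --- contradicting the identity $\T^{(N)}(\x\to\y)=\T^{(N)}(-\overleftarrow{\y}\to -\overleftarrow{\x})$ you write down and later use.

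The fix is to apply a $180^\circ$ rotation, i.e.\ the composite of horizontal and vertical flips, exactly as in the paper's proof. The vertical flip is what exchanges south and north (hence swaps the roles of $\x$ and $\y$, which is precisely what transposition of the transfer matrix does); the combined rotation sends $c_1$ (S,E bold) to (N,W bold) $=c_2$ and fixes the $a$- and $b$-types. This yields the identity $\T^{(N)}(\x\to\y;\dots,c_1,c_2)=\T^{(N)}(-\overleftarrow{\y}\to -\overleftarrow{\x};\dots,c_2,c_1)$, and then invoking that $\T^{(N)}$ depends only on $c_1c_2$ gives the formula you need. With this correction, the rest of your argument goes through unchanged.
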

\begin{proof}
The transposed transfer matrix differs from the original transfer matrix by the horizontal flip
 $\x\mapsto -\x$, $\y\mapsto -\y$, by the vertical flip which interchanges the roles  $c_1\leftrightarrow
 c_2$ and by the change of order of the coordinates (the latter is responsible for the change $i\leftrightarrow j$ between
 $A_\sigma$ and $A'_\sigma$). It remains to observe that the swap $c_1\leftrightarrow
 c_2$ does not change the transfer matrix.
\end{proof}

\subsection{Spectral decomposition of transfer matrix}

The eigenrelation implies integral formulas for the transfer matrix. Let us introduce the notation
$\T^{(N)}_t$ for the $t$--th power of the matrix $\T^{(N)}$. In particular, $\T^{(N)}_1=\T^{(N)}$.

\begin{theorem} \label{Theorem_eigentransfer} Fix $N=1,2,3,\dots$ and define for $\sigma\in
{\mathfrak S}(N)$
$$
A'_\sigma= (-1)^\sigma \prod_{i<j} \frac{1-\frac{a_2+b_1b_2-c_1 c_2}{b_1} z_{\sigma(j)} + \frac{a_2
b_2}{b_1} z_{\sigma(i)} z_{\sigma(j)}}{1-\frac{a_2+b_1b_2-c_1c_2}{b_1} z_{j} + \frac{a_2 b_2}{b_1}
z_{i} z_{j}},
$$
and
$$
 A''_\sigma= (-1)^\sigma \prod_{i<j} \frac{1-\frac{a_2+b_1b_2-c_1 c_2}{a_2 b_2} z_{\sigma(i)} + \frac{b_1}{a_2
b_2} z_{\sigma(i)} z_{\sigma(j)} }{1 -\frac{a_2+b_1b_2-c_1c_2}{a_2b_2} z_{i} + \frac{b_1}{a_2 b_2}
z_{i} z_{j} }.
$$
(The second one is obtained from the first one by the inversion $z_k\to z_k^{-1}$, $k=1,\dots,N$).
We have the following decompositions for the $t$-th power of the transfer matrix
\begin{multline}
\label{eq_transfer_non_TW_form} \T^{(N)}_t(\y\to\x; 1,a_2,b_1,b_2,c_1,c_2)\\=
\frac{1}{(2\pi\ii)^N} \int_{(C_r)^N} \sum_{\sigma\in {\mathfrak S}(N)} A'_\sigma \prod_{i=1}^N
z_{\sigma(i)}^{-x_i} z_i^{y_i-1}\left( \frac{b_1+(c_1c_2-b_1b_2)z_i}{1-b_2z_i}\right)^t dz_1\cdots
 dz_N
\end{multline}
and
\begin{multline}
\label{eq_transfer_TW_form}
 \T^{(N)}_t(\y\to\x ; 1,a_2,b_1,b_2,c_1,c_2)\\=  \frac{1}{(2\pi\ii)^N} \int_{(C_R)^N} \sum_{\sigma\in
{\mathfrak S}(N)} A''_\sigma \prod_{i=1}^N z_{\sigma(i)}^{x_i} z_i^{-y_i-1}\left(
\frac{b_1+(c_1c_2-b_1b_2)z_i^{-1}}{1-b_2z_i^{-1}}\right)^t  dz_1\cdots
 dz_N,
\end{multline}
where $C_r$ is a small positively oriented circle around origin (i.e.\ such that all non-zero
singularities of the integrand lie outside) and $C_R$ is a large positively oriented circle (i.e.\
such that all singularities of the integrand lie inside).
\end{theorem}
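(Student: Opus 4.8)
The plan is to deduce the spectral decomposition from the eigenrelation of Theorem~\ref{theorem_eigenrelation} together with a completeness/orthogonality statement for the Bethe eigenfunctions, realized here concretely by contour integration. First I would verify the claim for $t=0$, where $\T^{(N)}_0$ is the identity matrix: one must check that the right-hand side of \eqref{eq_transfer_non_TW_form} with $t=0$ reproduces $\mathbf 1_{\x=\y}$. Concretely, substituting $t=0$, the integral becomes $\frac{1}{(2\pi\ii)^N}\int_{(C_r)^N}\sum_\sigma A'_\sigma \prod_i z_{\sigma(i)}^{-x_i} z_i^{y_i-1}\,dz_1\cdots dz_N$; expanding $A'_\sigma$ as a sum over the coefficients in the Bethe ansatz and computing residues at $z_i=0$ on the small circle $C_r$ (so that one picks up only the pole at the origin, of the appropriate order dictated by the exponents), one finds the integral vanishes unless $\x$ and $\y$ coincide as sets, and the combinatorial identity for the $s_{ij}$--type factors forces the only surviving permutation to be the identity, with coefficient $1$. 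This is the standard ``first $\delta$-function'' computation in the coordinate Bethe ansatz; the presence of the asymmetric weights only affects the precise rational factors, not the structure of the argument.

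Next, having the $t=0$ case, I would apply the transfer matrix $t$ times. By Corollary~\ref{corollary_eigenrelation_T}, for each fixed tuple $(z_1,\dots,z_N)$ (on the contour, away from the excluded loci), the function $\x\mapsto \Phi(\x;z_1,\dots,z_N)=\sum_\sigma A'_\sigma\prod_i z_{\sigma(i)}^{-x_i}$ is an eigenfunction of the \emph{transposed} transfer matrix $\T^{(N)}(\x\to\y;1,a_2,b_1,b_2,c_1,c_2)^{T}$ with eigenvalue $\prod_i \frac{b_1+(c_1c_2-b_1b_2)z_i}{1-b_2 z_i}$. Writing the $t=0$ identity as $\mathbf 1_{\x=\y}=\frac{1}{(2\pi\ii)^N}\int_{(C_r)^N}\Phi(\x;\vec z)\prod_i z_i^{y_i-1}\,d\vec z$, I then compute $\T^{(N)}_t(\y\to\x)=\sum_{\x'}\big(\T^{(N)}_t\big)(\y\to \x')\,\mathbf 1_{\x'=\x}$ and move the transfer-matrix action inside the integral: summing against $\Phi(\cdot;\vec z)$ pulls out $t$ copies of the scalar eigenvalue. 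Here one must justify interchanging the (infinite) sum over $\x'\in\W_N$ with the integral: for $|b_2 z_i|<1$ the monomial expansions converge absolutely and the transfer matrix has only finitely many nonzero entries in each row, so Fubini applies after a routine tail estimate. The outcome is precisely \eqref{eq_transfer_non_TW_form}.

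Finally, formula \eqref{eq_transfer_TW_form} is obtained from \eqref{eq_transfer_non_TW_form} by the change of variables $z_k\mapsto z_k^{-1}$ for all $k$; this sends the small circle $C_r$ to the large circle $C_R$, sends $A'_\sigma$ to $A''_\sigma$ (which is how $A''_\sigma$ was defined), replaces $z_{\sigma(i)}^{-x_i} z_i^{y_i-1}$ by $z_{\sigma(i)}^{x_i} z_i^{-y_i+1}$, and accounts for the Jacobian factor $dz_i/z_i^2$, which absorbs one power of $z_i$ and produces the exponent $-y_i-1$. One checks the orientation is preserved and that the singularity of $\frac{b_1+(c_1c_2-b_1b_2)z_i^{-1}}{1-b_2 z_i^{-1}}$ — now at $z_i=b_2$ and at $z_i=0$ — lies inside $C_R$, matching the stated description of the contour. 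I expect the main obstacle to be the $t=0$ residue computation: verifying that the antisymmetrized Bethe kernel $\sum_\sigma A'_\sigma\prod z_{\sigma(i)}^{-x_i} z_i^{y_i-1}$ integrates to the Kronecker delta requires carefully tracking which permutations survive the residue extraction and invoking the same cancellation identity $s_{\sigma(i)\sigma(i+1)}\big(M_{\sigma(i)}L_{\sigma(i+1)}-\tfrac{a_2 b_1}{b_2}\big)=s_{\sigma(i+1)\sigma(i)}\big(M_{\sigma(i+1)}L_{\sigma(i)}-\tfrac{a_2 b_1}{b_2}\big)$ used in the proof of Theorem~\ref{theorem_eigenrelation}, so that all cross terms telescope and only $\sigma=\mathrm{id}$, $\x=\y$ contributes; everything else is bookkeeping.
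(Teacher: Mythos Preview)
Your approach is essentially the paper's: establish the $t=0$ delta-function identity, then propagate in $t$ via the eigenrelation of Corollary~\ref{corollary_eigenrelation_T}, and finally invert $z_k\mapsto z_k^{-1}$ to pass from \eqref{eq_transfer_non_TW_form} to \eqref{eq_transfer_TW_form}. Two corrections are worth noting.

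First, your bookkeeping in the propagation step is off. You write $\T^{(N)}_t(\y\to\x)=\sum_{\x'}\T^{(N)}_t(\y\to\x')\,\mathbf 1_{\x'=\x}$ and then want to use that $\Phi$ is an eigenfunction; but summing $\sum_{\x'}\T(\y\to\x')\Phi(\x')$ is the \emph{direct} action on the second argument, for which the eigenfunction is $\Psi$, not $\Phi$. The correct move (and what the paper does) is to sum over the \emph{first} argument: write the $t=0$ identity as $\mathbf 1_{\y'=\y}=\int \Phi(\y';\vec z)\prod_i z_i^{y_i-1}\,d\vec z$, then compute $\T^{(N)}_t(\y\to\x)=\sum_{\y'}\T^{(N)}_t(\y'\to\x)\,\mathbf 1_{\y'=\y}$ and apply Corollary~\ref{corollary_eigenrelation_T} in the form $\sum_{\y'}\T(\y'\to\x)\Phi(\y')=\lambda\,\Phi(\x)$.

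Second, the $t=0$ identity does \emph{not} rely on the cancellation $s_{\sigma(i)\sigma(i+1)}(M_{\sigma(i)}L_{\sigma(i+1)}-a_2b_1/b_2)=s_{\sigma(i+1)\sigma(i)}(M_{\sigma(i+1)}L_{\sigma(i)}-a_2b_1/b_2)$; that identity is specific to the proof of the eigenrelation (Theorem~\ref{theorem_eigenrelation}) and plays no role here. The paper does not reprove the delta-function identity at all: it quotes it (in the form with $A_\sigma$ rather than $A'_\sigma$) from Tracy--Widom \cite{TW_determ,TW_review} and \cite{BCPS2}, reduces the general-weight case to theirs by a scaling $z_i\mapsto\theta z_i$, and then converts $A_\sigma$ to $A'_\sigma$ by the reindexing $i\mapsto N+1-i$. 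If you do want to prove it directly, the argument is a sequential residue extraction at $z_i=0$ on $C_r$, showing that for $\sigma\neq\mathrm{id}$ the first index $i$ with $\sigma(i)\neq i$ forces a nonnegative total power of $z_i$ (using the ordering $x_1<\dots<x_N$, $y_1<\dots<y_N$), so the residue vanishes; no $M_iL_j$-type algebra enters.
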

\noindent{\bf Remark.} The formula \eqref{eq_transfer_TW_form} is similar to the contour integral
representation for the transitional probability of ASEP, see \cite[Theorem 2.1]{TW_determ}.
\begin{proof}
 We start with the following identity for $K=(k_1<\dots<k_N)$ and $L=(l_1<\dots<l_N)$: Set as in
 Theorem \ref{theorem_eigenrelation}
 $$
 A_\sigma= (-1)^\sigma \prod_{i<j} \frac{1-\frac{a_2+b_1b_2-c_1 c_2}{b_1} z_{\sigma(i)} + \frac{a_2
b_2}{b_1} z_{\sigma(i)} z_{\sigma(j)}}{1-\frac{a_2+b_1b_2-c_1c_2}{b_1} z_{i} + \frac{a_2 b_2}{b_1}
z_{i} z_{j}}.
$$
(note the difference both with $A'_\sigma$ and $A''_\sigma$). Then
\begin{equation}
\label{eq_init_cond_identity}
 \frac{1}{(2\pi\ii)^N}\int_{(C_r)^N} \sum_{\sigma\in \S_N} A_\sigma \prod_{i=1}^N z_{\sigma(i)}^{k_i}  z_i^{-l_i-1} dz_1\cdots
 dz_N={\mathbf 1}_{K=L},
\end{equation}
where the integration goes over the contours $C_r$ which are positively oriented circles around
the origin of a very small radius, and ${\mathbf 1}_{K=L}$ is the indicator function of $K=L$.
This identity under the assertion of equation \eqref{eq_quadratic_condition} was proved in
\cite[Section II.4.b]{TW_review}, \cite[Theorem 2.1]{TW_determ}  (see also \cite[Section
3]{BCPS2}); the general weight case is obtained by the change of variables of the form $z_i\mapsto
\theta z_i$, $i=1,\dots,N$, for suitable $\theta$.

Set $\hat K=(-k_N<\dots<-k_1)=(\hat k_1<\hat k_2<\dots<\hat k_N)$ and $\hat
L=(-l_N<\dots<l_1)=(\hat l_1<\hat l_2<\dots<\hat l_N)$, then \eqref{eq_init_cond_identity} yields
\begin{equation}
\label{eq_init_cond_identity_2}
 \frac{1}{(2\pi\ii)^N}\int_{(C_r)^N} \sum_{\sigma\in\S_N} A_\sigma \prod_{i=1}^N z_{\sigma(i)}^{-\hat k_{N+1-i}}  z_i^{\hat l_{N+1-i}-1} dz_1\cdots
 dz_N={\mathbf 1}_{K=L}.
\end{equation}
Make the change of index $i\mapsto (N+1-i)$ in \eqref{eq_init_cond_identity_2}. In $A_\sigma$ this
turns $\prod_{i<j}$ into $\prod_{i>j}$. Thus, $A$ turns into $A'$ and we get
\begin{equation}
\label{eq_init_cond_identity_3} \frac{1}{(2\pi\ii)^N} \int_{(C_r)^N} \sum_{\sigma\in\S_N}
A'_\sigma \prod_{i=1}^N z_{\sigma(i)}^{-\hat k_{i}}  z_i^{\hat l_{i}-1} dz_1\cdots
 dz_N=\delta(\hat K,\hat L).
\end{equation}
Multiply \eqref{eq_init_cond_identity_3} by the matrix $\T^{(N)}_t(\hat K \to X)$ and sum over
$\hat K$ using the result of Corollary \ref{corollary_eigenrelation_T}. We get
\begin{equation}
  \frac{1}{(2\pi\ii)^N}\int_{(C_r)^N} \sum_{\sigma\in\S_N} A'_\sigma \prod_{i=1}^N z_{\sigma(i)}^{-x_i}  z_i^{\hat l_i-1}\left( \frac{b_1+(c_1c_2-b_1b_2)z_i}{1-b_2z_i}\right)^t  dz_1\cdots
 dz_N=\T^{(N)}_t(\hat L\to X)
\end{equation}
Renaming $\hat L$ into $Y$ we get \eqref{eq_transfer_non_TW_form}. In order to get
\eqref{eq_transfer_TW_form} we make a change of variables $z_i\mapsto z_i^{-1}$ in
\eqref{eq_transfer_non_TW_form}.
\end{proof}

\section{Marginals and observables}

The aim of this section is to produce concise formulas for expectations of some observables with
respect to the probability distribution obtained by application of powers  of the transfer matrix
$\T^{(\infty)}$ to the step initial condition. These ultimately lead to formulas for  one--point
marginal distributions of the configuration of the six--vertex model at row $y=t$, which are
suitable for $t\to\infty$ asymptotic analysis. Our present approach unites and generalizes the
approaches developed in \cite{TW_determ} and \cite{BigMac}, \cite{BCS}.

As is explained in Section \ref{Section_infinitely_many}, the information about $\P(b_1,b_2)$ can
be extracted from ($N\to\infty$ limit of) transfer matrices $\T^{(N)}(\,\cdot\,;
1,1,b_1,b_2,1-b_1,1-b_2)$. The starting point for the arguments of this section is the
representation for these matrices of Theorem \ref{Theorem_eigentransfer}, which for convenience we
reproduce here with the new notation $\tau=b_2/b_1$. Later we will also assume that $\tau<1$.

\begin{corollary} \label{corollary_stochastic_transfer}Fix a positive integer $N$. With the notation $\tau=b_2/b_1$ define
$$
 A''_\sigma= (-1)^\sigma \prod_{i<j} \frac{1-(1+\tau^{-1})z_{\sigma(i)} + \tau^{-1} z_{\sigma(i)} z_{\sigma(j)} }{1 -(1+\tau^{-1}) z_{i} +
 \tau^{-1}
z_{i} z_{j} }.
$$
 We have the
following decompositions for the $t$-th power of the transfer matrix $\T^{(N)}$
\begin{multline}
\label{eq_transfer_TW_form_s}
 \T^{(N)}_t(\y\to\x; 1,1,b_1,b_2,1-b_1,1-b_2)\\= \frac{1}{(2\pi\ii)^N} \int_{(C_R)^N} \sum_{\sigma\in {\mathfrak S}(N)} A''_\sigma \prod_{i=1}^N z_{\sigma(i)}^{x_i}
z_i^{-y_i-1}\left( \frac{b_1+(1-b_1-b_2)z_i^{-1}}{1-b_2z_i^{-1}}\right)^t  dz_1\cdots
 dz_N,
\end{multline}
where $C_R$ is a large positively oriented circle around origin containing all singularities of
the integrand, and $\x=(x_1<\dots<x_n)$, $\y=(y_1<\dots<y_n)$.
\end{corollary}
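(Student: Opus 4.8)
The plan is to obtain Corollary \ref{corollary_stochastic_transfer} as a direct specialization of formula \eqref{eq_transfer_TW_form} in Theorem \ref{Theorem_eigentransfer} to the \emph{stochastic} weights $a_1=a_2=1$, $c_1=1-b_1$, $c_2=1-b_2$. These satisfy $a_1=1$ as Theorem \ref{Theorem_eigentransfer} requires, and also the quadratic relation $c_1c_2=(1-b_1)(1-b_2)$, so by Proposition \ref{Proposition_normalization} the matrix $\T^{(N)}$ is genuinely stochastic; this is precisely the case flagged in the Remark after Proposition \ref{Proposition_normalization}, so the resulting formula will simultaneously be the contour-integral expression for the $t$-step transition probabilities of the finite-particle version of the Markov chain $\x^{b_1,b_2}(t)$.

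The first step is purely algebraic bookkeeping of the substitution. Writing $c_1c_2=(1-b_1)(1-b_2)=1-b_1-b_2+b_1b_2$ we get $a_2+b_1b_2-c_1c_2=b_1+b_2$, $c_1c_2-b_1b_2=1-b_1-b_2$, and $a_2b_2=b_2$. Hence the two coefficients in $A''_\sigma$ from Theorem \ref{Theorem_eigentransfer} become $\frac{a_2+b_1b_2-c_1c_2}{a_2b_2}=\frac{b_1+b_2}{b_2}=1+\tau^{-1}$ and $\frac{b_1}{a_2b_2}=\frac{b_1}{b_2}=\tau^{-1}$, with $\tau=b_2/b_1$, which turns $A''_\sigma$ into exactly the expression stated in the corollary. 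Likewise the eigenvalue factor $\bigl(\frac{b_1+(c_1c_2-b_1b_2)z_i^{-1}}{1-b_2z_i^{-1}}\bigr)^t$ becomes $\bigl(\frac{b_1+(1-b_1-b_2)z_i^{-1}}{1-b_2z_i^{-1}}\bigr)^t$, so \eqref{eq_transfer_TW_form} specializes verbatim to \eqref{eq_transfer_TW_form_s}.

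The only point that needs a sentence of justification is the contour. After the substitution the singularities of the integrand in each variable $z_i$ sit at $z_i=0$, at $z_i=b_2$ (the zero of $1-b_2z_i^{-1}$), and along the joint loci $1-(1+\tau^{-1})z_i+\tau^{-1}z_iz_j=0$ coming from the denominators of $A''_\sigma$; all of these lie in a fixed bounded region, so the large positively oriented circle $C_R$ of Theorem \ref{Theorem_eigentransfer} works without any change (in particular $|z_i|>b_2$ on $C_R$, as needed for the geometric summations in the proof of Theorem \ref{theorem_eigenrelation} after the inversion $z_i\mapsto z_i^{-1}$). As there, the alternating sum over $\sigma\in\S(N)$ cancels the would-be contributions from the joint loci, so the representation is unambiguous.

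I do not anticipate any genuine obstacle: the whole content is the parameter substitution $(a_1,a_2,b_1,b_2,c_1,c_2)\mapsto(1,1,b_1,b_2,1-b_1,1-b_2)$ together with the abbreviation $\tau=b_2/b_1$, plus the routine remark that nothing about the contour changes under this substitution. If anything, the only thing worth double-checking is that the hypotheses of Theorem \ref{theorem_eigenrelation} that were hidden inside the choice of contours in Theorem \ref{Theorem_eigentransfer} remain compatible with the regime $\tau<1$ that will be imposed later; this is immediate since $\tau<1$ only restricts $b_2<b_1$ and places no new constraint on $C_R$.
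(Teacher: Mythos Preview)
Your approach is correct and identical to the paper's: the corollary is stated there without proof, introduced merely as Theorem~\ref{Theorem_eigentransfer} ``reproduce[d] here with the new notation $\tau=b_2/b_1$'' under the stochastic specialization $(a_1,a_2,c_1,c_2)=(1,1,1-b_1,1-b_2)$, and your algebraic bookkeeping is exactly what that entails. One small remark: your aside that ``the alternating sum over $\sigma$ cancels the would-be contributions from the joint loci'' is not accurate (the denominators $1-(1+\tau^{-1})z_i+\tau^{-1}z_iz_j$ are $\sigma$-independent and the poles there are genuine), but this is harmless since $C_R$ is chosen to enclose all singularities anyway.
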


\subsection{Contour deformations}

The following lemma turns out to be crucial for the analysis. We are going to use it with
\begin{equation}
\label{eq_f_definition}
 f(z)=\left( \frac{b_1+(1-b_1-b_2)z^{-1}}{1-b_2z^{-1}}\right)^t,
\end{equation}
as in Corollary \ref{corollary_stochastic_transfer}.

\begin{lemma}
\label{lemma_deformation} Consider the $N$--dimensional contour integral
\begin{equation}
\label{eq_predeformed}
 \int_{(C_R)^N} \sum_{\sigma\in {\mathfrak S}(N)} A''_\sigma \cdot \prod_{i=1}^N z_{\sigma(i)}^{x_i}
z_i^{-y_i-1} f(z_i) \cdot G\left(z_{\sigma(1)},\dots,z_{\sigma(N)}\right)\, dz_1\cdots
 dz_N,
 \end{equation}
 where $C_R$ is a positively oriented circle of a large radius $R$ (containing all singularities of the
 integrand),
 $A''_{\sigma}$ is given in Corollary
 \ref{corollary_stochastic_transfer}, and $x_1\le x_2\le\dots\le x_N$, $y_1\le y_2\le \dots\le y_N$  are two arbitrary ordered
 sequences of integers. Suppose that $f$ is a meromorphic function with a single
 singularity at a point $\mathfrak s\in\mathbb C$ such that $s:=|\mathfrak s|<1$ and such that the function
 $$
  f(z)f\left(1+\tau -\tau z^{-1} \right)
 $$
 has no singularity at $\mathfrak s$. Further assume that there exists a circular contour $C_{r'}$ such that:
 \begin{enumerate}
  \item $s<r'<1$
  \item $\left|1+\tau -\tau z^{-1} \right|>1$ for $z$ inside $C_{r'}$
  \item $|-(1+\tau^{-1}) z +
 \tau^{-1}
z z'|<1$ for $z$, $z'$ inside $C_{r'}$.
 \end{enumerate}
 Also assume that $G(z_1,\dots,z_N)$ is a holomorphic function of $z_1,\dots,z_N$ (without
 singularities).

 Then for any $k\in\{1,\dots,N\}$, we can deform the $k$ first contours (i.e.\ variables $z_1,\dots,z_k$) in
 \eqref{eq_predeformed} without changing the
 value of the integral,  from
 large contours $C_R$ to medium size contours $C_{r'}$.
\end{lemma}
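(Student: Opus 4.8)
The plan is to deform the contours one variable at a time, starting from $z_1$, and to show that no residues are picked up in the process. The key structural observation is that the summand, as a function of a single variable $z_i$, has potential poles coming from three sources: the factor $f(z_i)$ (a single pole at $\mathfrak s$ with $|\mathfrak s|<1$, hence \emph{inside} any contour $C_{r'}$ with $r'>s$, so this pole is never crossed when shrinking $C_R$ to $C_{r'}$); the explicit negative power $z_i^{-y_i-1}$ together with $z_{\sigma(i)}^{x_i}$ (only poles at the origin, again inside $C_{r'}$); and the denominators $1-(1+\tau^{-1})z_j+\tau^{-1}z_iz_j$ appearing in $A''_\sigma$. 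So the only dangerous poles are the zeros of these denominators, and the whole argument reduces to controlling them.

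First I would make the standard reduction to symmetrizing the integrand. Since we integrate over a product of identical contours, we may replace the summand by its symmetrization over $\mathfrak S(N)$ without changing the integral; for the deformation of a single variable it suffices to track, for a fixed pair $(i,j)$, the denominator $D_{ij}(z_i,z_j):=1-(1+\tau^{-1})z_j+\tau^{-1}z_iz_j$ (and, after relabeling, $D_{ji}$). Now deform $z_1$ from $C_R$ down to $C_{r'}$ while all other variables sit on $C_R$. The factor $A''_\sigma$ has $z_1$ in the denominator only through terms of the form $D_{1j}$ or $D_{j1}$. Because $A''_\sigma$ is a \emph{ratio} of such products over all pairs, and the numerator factors $1-(1+\tau^{-1})z_{\sigma(i)}+\tau^{-1}z_{\sigma(i)}z_{\sigma(j)}$ do not involve the unpermuted ordering, one checks as in \cite{TW_determ}, \cite{BCS} that the symmetrized integrand has no pole at the locus where any $D_{1j}$ vanishes: the residues at $D_{1j}=0$ cancel pairwise between the permutation $\sigma$ and $\sigma$ composed with the transposition swapping the images creating that factor, exactly the mechanism used in the proof of Theorem \ref{theorem_eigenrelation}. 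Hence, \emph{after symmetrization}, the only genuine poles in $z_1$ are at $\mathfrak s$ and at $0$, both already enclosed by $C_{r'}$, so $z_1$ may be freely deformed to $C_{r'}$.

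Next I would iterate: having moved $z_1$ to $C_{r'}$, deform $z_2$ from $C_R$ to $C_{r'}$, then $z_3$, and so on up to $z_k$. The new subtlety at the $m$-th step is that some variables $z_1,\dots,z_{m-1}$ already lie \emph{inside} $C_{r'}$, so when deforming $z_m$ we must ensure that the denominators $D_{m\ell}$ and $D_{\ell m}$ with $\ell<m$ do not vanish for $z_m$ between $C_{r'}$ and $C_R$ and $z_\ell$ inside $C_{r'}$. Here is exactly where hypotheses (2) and (3) enter. Condition (3), $|-(1+\tau^{-1})z+\tau^{-1}zz'|<1$ for $z,z'$ inside $C_{r'}$, says $D(z,z')=1+(\text{something of modulus}<1)\neq 0$, so the relevant denominator cannot vanish when \emph{both} arguments are inside $C_{r'}$; and the cancellation-after-symmetrization argument from the previous paragraph handles the remaining $D_{m\ell}$ with the other argument still on $C_R$. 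Condition (2), $|1+\tau-\tau z^{-1}|>1$ for $z$ inside $C_{r'}$, guarantees that the auxiliary hypothesis ``$f(z)f(1+\tau-\tau z^{-1})$ has no singularity at $\mathfrak s$'' is actually used correctly: it ensures the point $1+\tau-\tau\mathfrak s^{-1}$ produced by the substitution $z\mapsto 1+\tau-\tau z^{-1}$ lies \emph{outside} $C_{r'}$, so no spurious pole of $f$ composed with this Möbius map is dragged inside during the deformation. (The Möbius map $z\mapsto 1+\tau-\tau z^{-1}$ is precisely the one relating a factor $D_{ij}$ to the substitution $z_i\mapsto$ something, i.e.\ it is the map under which $D_{ij}(z_i,z_j)=0$ solves for $z_i$ in terms of $z_j$.)

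The main obstacle, and the step deserving the most care, is the pairwise-cancellation argument showing the symmetrized integrand has no pole along $D_{ij}=0$ once the other variables are on the large contour, \emph{while simultaneously} some variables already sit on the small contour. One must verify that the transposition used to pair up $\sigma$ with $\sigma'$ does not move a variable across a contour (it does not, since the transposition acts on the \emph{values} $z_{\sigma(i)}$, not on which contour $z_i$ lives on), and that the residue identity is the same algebraic identity $s_{ij}(M_iL_j-\cdots)=s_{ji}(M_jL_i-\cdots)$ (in the notation of \eqref{smatr}) already established. Assembling these pieces — the harmless poles at $0$ and $\mathfrak s$, the symmetrization cancellation along the $D_{ij}$-loci, and conditions (1)–(3) controlling the geometry — gives the claimed contour deformation for all $k\in\{1,\dots,N\}$ by induction on the number of deformed variables.
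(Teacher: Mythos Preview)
Your proposal has a genuine gap in the inductive step, and the mechanism you invoke is not the one that actually makes the argument work.

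In the base case (deforming $z_1$) you over-complicate matters: the denominator factors involving $z_1$ are only $1-(1+\tau^{-1})z_1+\tau^{-1}z_1 z_j$ with $j>1$; since $z_j\in C_R$ is large, the corresponding pole in $z_1$ sits near the origin, well inside $C_{r'}$, and is simply not crossed. No symmetrization or cancellation is needed here.

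The real difficulty is the inductive step, and there your argument breaks down. With $z_1,\dots,z_{m-1}$ already on $C_{r'}$, the dangerous factors when deforming $z_m$ are $1-(1+\tau^{-1})z_i+\tau^{-1}z_i z_m$ for $i<m$. Their zero in $z_m$ is at $z_m=1+\tau-\tau z_i^{-1}$, and by condition~(2) this point has modulus $>1$: it lies \emph{between} $C_{r'}$ and $C_R$ and \emph{is} crossed. Condition~(3) only tells you the denominator does not vanish when \emph{both} variables are inside $C_{r'}$, which is irrelevant here since $z_m$ is in transit. Your ``cancellation-after-symmetrization'' does not dispose of these residues: the integrand is not symmetric in $(z_i,z_m)$ (the exponents $-y_i-1$ differ), and the transposition you propose does not annihilate the residue at a fixed $D_{im}=0$.

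What the paper does is acknowledge that these residues are genuinely picked up and then show their total contribution vanishes by a \emph{two-step} residue computation: first take the residue in $z_m$ at $z_m=1+\tau-\tau z_i^{-1}$; then integrate the result in $z_i$ over $C_{r'}$, checking (this is where conditions (1)--(3) and the hypothesis on $f(z)f(1+\tau-\tau z^{-1})$ are each used) that the only surviving poles of that $z_i$-integral come from factors $1-(1+\tau^{-1})z_i+\tau^{-1}z_i z_j$ with $j>m$. Taking that second residue forces $z_m=z_j$, and \emph{then} a skew-symmetry in $(z_m,z_j)$ over the sum in $\sigma$ kills the contribution. The identity you cite from Theorem~\ref{theorem_eigenrelation} (the $s_{ij}(M_iL_j-\cdots)$ relation) plays no role here; that is a different algebraic cancellation used in the eigenfunction proof, not in this contour deformation.
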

\noindent{\bf Remark 1.} The existence  of the medium size contour $C_{r'}$  is assured as long as
$s$ is sufficiently small. In our application of this this lemma to  the function $f$ in
\eqref{eq_f_definition}, as long as $b_1$ and $b_2$ are small, the corresponding $s$ will be as
well. Later this restriction on the $b_1$, $b_2$ will be relaxed by use of an analytic
continuation argument.

\smallskip

\noindent{\bf Remark 2.} A somewhat similar statement can be found in \cite[Lemma 5.1]{TW_determ}.

\begin{proof}[Proof of Lemma \ref{lemma_deformation}]
The proof is induction in $k$. We start with $k=1$ and deform the $z_1$ contour. When we deform
it, we might encounter poles from the denominator factors
$$ 1 -(1+\tau^{-1}) z_{1} +
 \tau^{-1}
z_{1} z_{j}=0, \quad j=2,\dots,N,
$$
which arise when $z_1= ((1+\tau^{-1})-\tau^{-1} z_j)^{-1}$. Since the radius $R$ of the $C_R$
contour (along which  the $z_j$s are integrated) is large, and the radius $r'$ is bounded away
from $0$, we find that these possible poles in $z_1$ are within $C_{r'}$. Hence, we may freely
deform $z_1$ from $C_R$ to $C_{r'}$.

\smallskip

Now suppose that the first $k-1$ contours (i.e.\ variables $z_1,\dots,z_{k-1}$) are along $C_{r'}$
 and let us deform the $z_k$-contour. Before doing that
is is convenient to slightly change the contours so that $z_i$, $i=1,\dots,k-1$, are integrated
over $C_{r'-i \eps}$ (with $\eps$ small) and $z_j$, $j=k+1,\dots,N$, are integrated over
$C_{R-j\eps}$. Clearly, such deformation will not change the integrals and after we deform
$z_k$-contour from $C_R$ to $C_{r'}$ we can bring all the contours back to $C_{r'}$ and $C_{R}$.

While deforming the $z_k$-contour we might encounter two kinds of possible poles in the
deformation. The first ones are of the same type as before, corresponding to  denominator factors
$$ 1 -(1+\tau^{-1}) z_{k} +
 \tau^{-1}
z_{k} z_{j}=0, \quad j=k+1,\dots,N,
$$
and, as before, these poles are not crossed  during the deformation.

The other set of possible poles correspond to denominator factors
\begin{equation}\label{eq_pole} 1 -(1+\tau^{-1}) z_{i} +
 \tau^{-1}
z_{i} z_{k}=0,\quad i=1,\dots,k-1,
\end{equation}
arise when $z_k= 1+\tau -\tau z_i^{-1}$. Such a pole is only present when
$\sigma^{-1}(i)>\sigma^{-1}(k)$
--- otherwise there is a matching term in the numerator of $A''_\sigma$ which cancels the denominator.
Note that these poles (when they are present) lie between $C_R$ and $C_{r'}$ contours (this is
assured by the definition of $C_{r'}$) and hence we must consider  the residues from them. (Note
that all the poles are distinct and simple because of the small perturbation of the contours that
we made at the beginning of the proof.) In the rest of the proof we show that the total sum of
residues  of such poles vanishes.

Let us fix an index $i$ such that $\sigma^{-1}(i)>\sigma^{-1}(k)$ and compute the residue in
$z_k=1+\tau-\tau z_i^{-1}$. The $z_i$--dependent part of this residue is:

\begin{multline}
\label{eq_residue_form}
 (\cdots) \prod_{j={1}}^{i-1} \left(\frac{1}{1 -(1+\tau^{-1}) z_{j} +
 \tau^{-1}
z_{i} z_{j}} \cdot \frac{1}{1 -(1+\tau^{-1}) z_j +
 \tau^{-1}
 \left( 1+\tau -\tau z_i^{-1}\right)z_{j}} \right)
 \\
\times \prod_{j={i+1}}^{k-1} \left(\frac{1}{1 -(1+\tau^{-1}) z_{i} +
 \tau^{-1}
z_{i} z_{j}} \cdot \frac{1}{1 -(1+\tau^{-1}) \left( 1+\tau -\tau z_i^{-1}\right) +
 \tau^{-1}
 \left( 1+\tau -\tau z_i^{-1}\right)z_{j}} \right)
\\
\times\prod_{j={k+1}}^{N} \left(\frac{1}{1 -(1+\tau^{-1}) z_{i} +
 \tau^{-1}
z_{i} z_{j}} \cdot \frac{1}{1 -(1+\tau^{-1}) \left( 1+\tau -\tau z_i^{-1}\right) +
 \tau^{-1}
 \left( 1+\tau -\tau z_i^{-1}\right)z_{j}} \right)
 \\
  \times z_i^{-y_i-1} \left( 1+\tau -\tau z_i^{-1}\right)^{-y_k-1}
 z_i^{x_{\sigma^{-1}(i)}} \left( 1+\tau -\tau z_i^{-1}\right)^{x_{\sigma^{-1}(j)}}
 f(z_i)
 f\left(1+\tau - \tau z_i^{-1}\right),
\end{multline}
where $(\cdots)$ stands for several additional quadratic (holomorphic) factors arising from the
numerator in the definition of $A''_{\sigma}$.

Recall that $z_i$ is integrated over $C_{r'- i \eps}$ and let us compute the integral of
\eqref{eq_residue_form} over this contour. The integrand has the following (potential) poles as
$z_i$ varies inside $C_{r'}$:
\begin{itemize}
 \item By assumption, the factor $ f(z_i)
 f\left(1+\tau - \tau z_i^{-1}\right)$ has no pole at $z_i=\mathfrak s$ (which is inside $C_{r'}$),
 but might have a pole when $1+\tau -\tau z_i^{-1}=\mathfrak s$. But the  second condition on
 $r'$ guarantees that the latter pole is outside $C_{r'}$.

 \item The total power of $z_i$ in the integrand is $$
 z_i^{(y_k-y_i)+(x_{\sigma^{-1}(i)}-x_{\sigma^{-1}(k)})},
$$
and both terms in the exponent are positive here, since $\sigma^{-1}(i)>\sigma^{-1}(k)$ and $x$'s
and $y$'s  are ordered. Thus, there is no pole at zero coming from this part.

\item The factor $\left( 1+\tau - \tau z_i^{-1}\right)$ is non-zero inside $C_{r'}$.

 \item The first factor in $\prod_{j=1}^{i-1}$ has no poles inside $C_{r'}$,
 which  is guaranteed by the  third condition on $r'$.

 \item
 The
 second factor in $\prod_{j=1}^{i-1}$ equals $(1-z_j z_i^{-1})^{-1}$. Thus, the pole
 appears at $z_i=z_j$. Recall that $z_i$ is integrated over $C_{r'-i\eps}$, while $z_j$ is
 integrated over $C_{r'-j\eps}$. Since $i>j$, the latter contour is larger and thus there is no pole
 at $z_i=z_j$ inside $z_i$--contour.\footnote{Had we chosen the integration contours in a different way, e.g.\
if $\eps<0$, then these poles would give non-zero residues. However, when we sum over permutations
$\sigma$ the contributions would still cancel out, as follows from the argument similar to the one
that we will use further in this proof.}

\item The first factor in $\prod_{j=i+1}^{k-1}$ has no poles inside $C_{r'}$, which
is guaranteed by the third condition on $r'$.

 \item The second factor in $\prod_{j=i+1}^{k-1}$ has no poles inside $C_{r'}$, since $z_j$ is
 integrated over the $C_{r'}$ contour with $r'<1$ and thus
  $\left|\left( 1+\tau -\tau z_i^{-1}\right)(-1-\tau^{-1}  +
 \tau^{-1} z_{j})\right|>\left| 1+\tau -\tau z_i^{-1}\right|>1$ inside $C_{r'}$.

 \item The second factor in $\prod_{j=k+1}^{N}$ has no poles inside $C_{r'}$, since $z_j$ is
 integrated over the large contour $C_R$, and we have  that
 $\left|1+\tau - \tau z_i^{-1}\right|>1$ inside $C_{r'}$.

\end{itemize}

What remains to consider is the first factor in $\prod_{j=k+1}^{N}$. For each such $j$, if
$\sigma^{-1}(j)>\sigma^{-1}(i)$ then this factor cancels by the corresponding term in the
numerator of $A''_\sigma$. On the other hand, if $\sigma^{-1}(j)<\sigma^{-1}(i)$, then this factor
indeed gives a pole inside $C_{r'}$ at $z_i=((1+\tau^{-1})-z_j \tau^{-1})^{-1}$ (note that all
such poles are distinct because of the small perturbation of contours that we did at the beginning
of the proof). Note that the substitution of such $z_i$ turns $(1+\tau) -\tau z_i^{-1}$ into
$z_j$:
$$
 \frac{(1+\tau) z_i
-\tau}{z_i}=((1+\tau^{-1})-z_j \tau^{-1}) \left(\frac{1+\tau}{(1+\tau^{-1})-z_j
\tau^{-1}}-\tau\right)=z_j.
$$
To summarize, we took a pair of indices $i$, $j$ such that $i<k<j$ and a term corresponding to a
permutation $\sigma$ such that $\sigma^{-1}(i)>\sigma^{-1}(k)$, $\sigma^{-1}(i)>\sigma^{-1}(j)$.
Then we first took the residue in $z_k$ at $z_k=(1+\tau) -\tau z_i^{-1}$ and after that took the
residue in $z_i$ at $z_i=((1+\tau^{-1})-\tau^{-1}z_j )^{-1}$. Effectively, this means that we
first multiplied the integrand by $(z_k-((1+\tau)  -\tau z_i^{-1}))$ and by
$(z_i-((1+\tau^{-1})-z_j \tau^{-1})^{-1})$ and then plugged in into the result $z_k=z_j$ and
$z_i=((1+\tau^{-1})-z_j \tau^{-1})^{-1}$. We claim that if we now fix $i<k<j$ and sum over all
$\sigma$, then these residues cancel out because of skew-symmetry, which is seen through the
following argument.

Note that if we multiply the original integrand in \eqref{eq_predeformed} by $\prod_{i<j}\left(1
-(1+\tau^{-1}) z_{i} +
 \tau^{-1}
z_{i} z_{j} \right) \prod_{i=1}^N z_i^{y_i}$ and then do the summation in its definition only over
all permutations $\sigma$ such that $\sigma^{-1}(i)>\sigma^{-1}(k)$,
$\sigma^{-1}(i)>\sigma^{-1}(j)$ (recall that other $\sigma$'s do not contribute), then the result
would be skew--symmetric in the pair of variables $z_k$, $z_j$ and, thus, vanish when $z_k=z_j$.
This is because all such permutations can be split into pairs  with permutations in each pair
different by the transposition $(k,j)$. When we divide back by $\prod_{i<j}\left(1 -(1+\tau^{-1})
z_{i} +
 \tau^{-1}
z_{i} z_{j} \right)\prod_{i=1}^N z_i^{y_i}$ and further multiply by $(z_k-((1+\tau) -\tau
z_i^{-1}))$ and by $(z_i-((1+\tau^{-1})-\tau^{-1}z_j )^{-1})$ the vanishing property is kept
intact.
\end{proof}

\subsection{Notations and summation formulas}

In what follows we use the $q$--algebra notations:
$$
 (x;q)_k=\prod_{i=1}^k (1-x q^{i-1}),\quad {N \choose k}_q = \frac{(1-q^N)(1-q^{N-1})\cdots (1-q^{N-k+1})}{(1-q)(1-q^2)\cdots(1-q^k)} =\frac{
 (q^{N-k+1};q)_k}{(q;q)_k}.
$$
In the first definition $k$ is a non-negative integer or $+\infty$. The following two summation
formulas are known as the $q$--binomial theorems, see e.g.\ \cite[Theorem 10.2.1 and Corollary
10.2.2(c)]{AAR}.
\begin{lemma} \label{Lemma_q_binomial} For any $q,t\in\mathbb C$ such that $|q|\ne 1$ and any positive integer $n$, we have
\begin{equation}
\label{eq_q_binomial}
 \prod_{i=0}^{n-1} (1+q^i t) =\sum_{k=0}^n q^{k(k-1)/2} {n\choose k}_q t^k.
\end{equation}
\end{lemma}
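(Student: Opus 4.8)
The plan is to prove \eqref{eq_q_binomial} by induction on $n$, using the obvious recurrence for the product together with the $q$-analogue of Pascal's rule. Write $P_n(t)=\prod_{i=0}^{n-1}(1+q^i t)$ for the left-hand side. The base case $n=1$ reads $1+t=\binom{1}{0}_q+\binom{1}{1}_q t$, which holds since both Gaussian binomial coefficients are $1$ and both prefactors $q^{k(k-1)/2}$ equal $1$. For the inductive step I would use the factorization $P_n(t)=(1+q^{n-1}t)\,P_{n-1}(t)$, which shows that $[t^k]P_n(t)=[t^k]P_{n-1}(t)+q^{n-1}\,[t^{k-1}]P_{n-1}(t)$ (the boundary cases $k=0$ and $k=n$ being immediate). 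Substituting the inductive hypothesis, the claim becomes the coefficient identity
\begin{equation*}
 q^{k(k-1)/2}\binom{n}{k}_q = q^{k(k-1)/2}\binom{n-1}{k}_q + q^{n-1}\,q^{(k-1)(k-2)/2}\binom{n-1}{k-1}_q ,
\end{equation*}
and since $\tfrac{k(k-1)}{2}-\tfrac{(k-1)(k-2)}{2}=k-1$, dividing through by $q^{k(k-1)/2}$ reduces it to the $q$-Pascal relation $\binom{n}{k}_q=\binom{n-1}{k}_q+q^{\,n-k}\binom{n-1}{k-1}_q$.

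It then remains to verify $q$-Pascal directly from the definition. Writing $\binom{n}{k}_q=\frac{(q;q)_n}{(q;q)_k(q;q)_{n-k}}$ and factoring out the common $\frac{(q;q)_{n-1}}{(q;q)_k(q;q)_{n-k}}$ from the right-hand side, using $(q;q)_k=(1-q^k)(q;q)_{k-1}$ and $(q;q)_{n-k}=(1-q^{n-k})(q;q)_{n-k-1}$, one is left with the scalar identity $(1-q^{n-k})+q^{n-k}(1-q^k)=1-q^n$, which reconstitutes $\binom{n}{k}_q$ exactly. (There are two painless variants: one can instead use the recurrence $P_n(t)=(1+t)P_{n-1}(qt)$, which yields the mirror relation $\binom{n}{k}_q=q^k\binom{n-1}{k}_q+\binom{n-1}{k-1}_q$; or one can bypass $q$-Pascal entirely by expanding $[t^k]P_n(t)=\sum_{S}q^{\sum_{i\in S}i}$ over $k$-element subsets $S\subseteq\{0,\dots,n-1\}$, pulling out the minimal exponent $q^{0+1+\dots+(k-1)}=q^{k(k-1)/2}$, and recognizing the residual sum as the generating function of integer partitions fitting inside a $k\times(n-k)$ rectangle, which is $\binom{n}{k}_q$.)

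Since \eqref{eq_q_binomial} is entirely classical (see \cite{AAR}), there is no real obstacle; the only point requiring attention is the bookkeeping of the powers of $q$ in the inductive step, i.e.\ making sure the telescoping $\tfrac{k(k-1)}{2}-\tfrac{(k-1)(k-2)}{2}=k-1$ produces precisely the shift $q^{\,n-k}$ that appears in the form of $q$-Pascal one has chosen to prove. Because the statement is used in the paper only as an auxiliary algebraic identity (and an alternative proof of the companion $q$-binomial sum from the first one follows by the substitution $q\mapsto q^{-1}$ together with the symmetry $\binom{n}{k}_q=\binom{n}{n-k}_q$), the induction above is the most economical route and I would simply carry it out, or cite \cite{AAR} directly.
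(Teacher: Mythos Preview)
Your inductive proof is correct and is the standard elementary argument for the finite $q$-binomial theorem. The paper, however, does not prove this lemma at all: it simply states it as a known identity and refers the reader to \cite[Theorem 10.2.1 and Corollary 10.2.2(c)]{AAR}. So your closing suggestion to ``cite \cite{AAR} directly'' is in fact exactly the route the paper takes; your written-out induction is more than what the paper provides.
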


\begin{lemma} \label{Lemma_q_binomial_2} For any $a,x,q\in\mathbb C$ such that $|q|<1$, $|x|<1$
we have
\begin{equation}
\label{eq_q_binomial_3}
 \frac{(ax;q)_\infty}{(x;q)_\infty} =\sum_{k=0}^\infty \frac{(a;q)_k}{(q;q)_k} x^k.
\end{equation}
\end{lemma}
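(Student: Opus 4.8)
The plan is to prove Lemma~\ref{Lemma_q_binomial_2} by the functional equation method, which is completely self-contained; at the end I note that it can alternatively be deduced from Lemma~\ref{Lemma_q_binomial} by a limiting argument. Fix $q$ with $|q|<1$ and $a\in\mathbb{C}$, and set $F(x)=\dfrac{(ax;q)_\infty}{(x;q)_\infty}$. First I would record that the infinite product $(x;q)_\infty=\prod_{i\ge 0}(1-xq^i)$ converges uniformly on compact subsets of $\mathbb{C}$ (since $\sum_i|q|^i<\infty$) and hence is an entire function whose only zeros are the points $x=q^{-i}$, $i\ge 0$, all of which satisfy $|x|\ge 1$. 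Therefore $F$ is holomorphic on the disc $|x|<1$ and $F(0)=1$.

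Next, the elementary factorizations $(x;q)_\infty=(1-x)(qx;q)_\infty$ and $(ax;q)_\infty=(1-ax)(qax;q)_\infty$ give the functional equation
\[
 (1-x)\,F(x)=(1-ax)\,F(qx),\qquad |x|<1 .
\]
Writing the Taylor expansion $F(x)=\sum_{k\ge 0}c_k x^k$ (valid on $|x|<1$, with $c_0=F(0)=1$), substituting, and comparing the coefficient of $x^k$ on both sides yields $c_k-c_{k-1}=q^k c_k-a q^{k-1}c_{k-1}$, i.e.
\[
 c_k\,(1-q^k)=c_{k-1}\,(1-aq^{k-1}),\qquad k\ge 1 .
\]
Since $|q|<1$ we have $1-q^k\neq 0$, so this recursion together with $c_0=1$ determines the $c_k$ uniquely, and an immediate induction gives $c_k=\prod_{j=1}^{k}\dfrac{1-aq^{j-1}}{1-q^j}=\dfrac{(a;q)_k}{(q;q)_k}$. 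Thus $F(x)=\sum_{k\ge 0}\dfrac{(a;q)_k}{(q;q)_k}x^k$ on $|x|<1$; the right-hand series indeed has radius of convergence $1$, since the ratio of consecutive coefficients equals $\dfrac{1-aq^k}{1-q^{k+1}}\to 1$. This is exactly the claimed identity \eqref{eq_q_binomial_3}.

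There is no genuine obstacle here; the only point that needs care is the justification in the first paragraph that $F$ is holomorphic in a neighbourhood of $0$, so that the term-by-term coefficient comparison is legitimate. As an alternative route, Lemma~\ref{Lemma_q_binomial_2} can be obtained from Lemma~\ref{Lemma_q_binomial}: reading \eqref{eq_q_binomial} with $t\mapsto -s$ gives $(s;q)_n=\sum_k(-1)^k q^{k(k-1)/2}\binom{n}{k}_q s^k$, and multiplying by the (standard) companion expansion $\prod_{i=0}^{n-1}(1-q^i t)^{-1}=\sum_{\ell\ge 0}\binom{n+\ell-1}{\ell}_q t^\ell$, setting $s=at$, and letting $n\to\infty$ term by term (using $\binom{n}{k}_q\to(q;q)_k^{-1}$ and $|t|<1$ for convergence) reproduces $\dfrac{(at;q)_\infty}{(t;q)_\infty}=\sum_k\Big(\sum_{j=0}^k(-1)^j q^{j(j-1)/2}\binom{k}{j}_q a^j\Big)\dfrac{t^k}{(q;q)_k}$, and the inner sum equals $(a;q)_k$ again by \eqref{eq_q_binomial}. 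I would present the functional equation proof as the main one, since it is shortest and does not even require Lemma~\ref{Lemma_q_binomial}.
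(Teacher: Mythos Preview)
Your functional-equation proof is correct and is the standard self-contained argument for the infinite $q$-binomial theorem. Note, however, that the paper does not actually prove Lemma~\ref{Lemma_q_binomial_2}: it simply states the identity and cites \cite[Theorem 10.2.1 and Corollary 10.2.2(c)]{AAR} as a reference. So your write-up is genuinely more than what the paper does---you supply a complete proof where the paper only gives a pointer to the literature. The advantage of your approach is that it makes the paper self-contained on this point (and in fact does not even need Lemma~\ref{Lemma_q_binomial}); the paper's approach just keeps the exposition short by treating the identity as classical. Your alternative derivation via Lemma~\ref{Lemma_q_binomial} is also fine but unnecessary given the direct argument.
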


We also need two symmetrization formulas. For a function $G$ of $N$ variables $z_1,\dots,z_N$, we
denote
$$
 Sym_N(G)=\sum_{\sigma\in\mathfrak S(N)} G(z_{\sigma(1)},z_{\sigma(2)},\dots,z_{\sigma(N)}).
$$

\begin{lemma} \label{lemma_TW_identity} Let $\alpha$ be a complex number. Then
$$
 Sym_N\left(\prod_{1\le i<j \le N} \frac{1-(\alpha+1) z_i+\alpha z_i z_j}{z_j-z_i} \prod_{i=1}^N \frac{z_i^{i-1}}{
 1- z_i z_{i+1} \cdots z_N} \right)= \prod_{i=1}^N \frac{1}{1-z_i}.
$$
\end{lemma}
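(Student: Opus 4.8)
The plan is to prove the identity by induction on $N$, reducing the inductive step to a single-variable summation that I evaluate by a residue computation. Write $F_N(z_1,\dots,z_N)$ for the argument of $Sym_N$ on the left-hand side, so the claim reads $Sym_N(F_N)=\prod_{i=1}^N(1-z_i)^{-1}$; the case $N=1$ is immediate. For the inductive step I split the sum over $\mathfrak S(N)$ according to which variable occupies the first slot. If $z_{\sigma(1)}=z_a$, then the factor $(1-z_{\sigma(1)}z_{\sigma(2)}\cdots z_{\sigma(N)})^{-1}=(1-z_1\cdots z_N)^{-1}$ is symmetric, and the $i=1$ part of the double product equals $\prod_{b\ne a}\frac{1-(\alpha+1)z_a+\alpha z_a z_b}{z_b-z_a}$, which depends only on $a$ and is symmetric in the remaining variables. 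In the leftover factors I pull out $\prod_{b\ne a}z_b$ so as to lower the exponents $z_{\sigma(k)}^{k-1}$ to $z_{\sigma(k)}^{k-2}$; after reindexing $k\mapsto k-1$ what remains is exactly $F_{N-1}$ evaluated at $(z_{\sigma(2)},\dots,z_{\sigma(N)})$ with the same $\alpha$ (in particular the tail denominators $1-z_{\sigma(k)}\cdots z_{\sigma(N)}$ become the tail denominators of the $(N-1)$-variable instance). Summing over the internal bijection and invoking the inductive hypothesis yields
$$
Sym_N(F_N)=\frac{1}{1-z_1\cdots z_N}\sum_{a=1}^N\Bigl(\prod_{b\ne a}z_b\Bigr)\Bigl(\prod_{b\ne a}\frac{1-(\alpha+1)z_a+\alpha z_a z_b}{z_b-z_a}\Bigr)\prod_{b\ne a}\frac{1}{1-z_b}.
$$

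Using $\prod_{b\ne a}(1-z_b)^{-1}=(1-z_a)\prod_{i=1}^N(1-z_i)^{-1}$ and clearing the common factor $\prod_i(1-z_i)^{-1}$, the target equality $Sym_N(F_N)=\prod_i(1-z_i)^{-1}$ becomes the single-variable identity
$$
\sum_{a=1}^N(1-z_a)\Bigl(\prod_{b\ne a}z_b\Bigr)\prod_{b\ne a}\frac{1-(\alpha+1)z_a+\alpha z_a z_b}{z_b-z_a}=1-z_1z_2\cdots z_N.
$$
To prove it I introduce $\Phi(w)=\prod_{b=1}^N\frac{1-(\alpha+1)w+\alpha w z_b}{z_b-w}$ and use the factorization $1-(\alpha+1)z_a+\alpha z_a^2=(1-z_a)(1-\alpha z_a)$, which gives $\operatorname{Res}_{w=z_a}\Phi(w)=-(1-z_a)(1-\alpha z_a)\prod_{b\ne a}\frac{1-(\alpha+1)z_a+\alpha z_a z_b}{z_b-z_a}$. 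Writing $\prod_{b\ne a}z_b=\bigl(\prod_b z_b\bigr)/z_a$, the left-hand side above equals $-\bigl(\prod_b z_b\bigr)\sum_{a}\operatorname{Res}_{w=z_a}\bigl(\Phi(w)/(w(1-\alpha w))\bigr)$.

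Finally I evaluate this sum of residues by the residue theorem on $\mathbb C\cup\{\infty\}$: the function $\Phi(w)/(w(1-\alpha w))$ has simple poles at $w=z_1,\dots,z_N$, at $w=0$ and at $w=1/\alpha$, and decays like $w^{-2}$ at infinity, so the total residue vanishes and $\sum_a\operatorname{Res}_{w=z_a}=-\operatorname{Res}_{w=0}-\operatorname{Res}_{w=1/\alpha}$. A direct computation gives $\operatorname{Res}_{w=0}=\Phi(0)=\prod_b z_b^{-1}$ and $\operatorname{Res}_{w=1/\alpha}=-\Phi(1/\alpha)=-1$, the latter because $1-(\alpha+1)/\alpha+z_b=z_b-1/\alpha$ makes every factor of $\Phi(1/\alpha)$ equal to $1$. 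Substituting back gives the left-hand side $=-\bigl(\prod_b z_b\bigr)\bigl(1-\prod_b z_b^{-1}\bigr)=1-\prod_b z_b$, which closes the induction. Throughout, $z_1,\dots,z_N$ and $\alpha$ may be taken generic so that all poles are simple, and the identity then extends to all values by rationality. I expect the only real obstacle to be the bookkeeping in the inductive reduction — carefully matching the leftover exponents and tail denominators with the $(N-1)$-variable copy of $F$ — together with checking the $w^{-2}$ decay of $\Phi(w)/(w(1-\alpha w))$; once these are in place, the residue evaluation is routine.
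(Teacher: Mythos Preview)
Your proof is correct. The inductive reduction is carried out cleanly: pulling out the symmetric factor $(1-z_1\cdots z_N)^{-1}$ and the $i=1$ row of the double product, then extracting $\prod_{b\ne a}z_b$ to lower the exponents from $k$ to $k-1$, reproduces exactly $F_{N-1}$ on the remaining variables, and the inductive hypothesis applies since the identity is a rational-function identity in whichever $N-1$ variables appear. The residue computation is also right: $\Phi(w)$ tends to a nonzero constant at $\infty$, so $\Phi(w)/(w(1-\alpha w))=O(w^{-2})$ there, the pole at $0$ contributes $\Phi(0)=\prod_b z_b^{-1}$, the pole at $1/\alpha$ contributes $-\Phi(1/\alpha)=-1$ via the cancellation $1-(\alpha+1)/\alpha+z_b=z_b-1/\alpha$, and combining gives $1-\prod_b z_b$ as required. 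Your remark that generic $\alpha$ and $z_i$ suffice (extending by rationality) correctly handles the degenerate cases $\alpha=0$ or coincident poles.

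The paper does not give its own proof; it simply cites Tracy--Widom \cite{TW_review} and \cite{BCPS2}. Your argument is in fact close in spirit to Tracy--Widom's original one --- induction on $N$ reducing to a one-variable partial-fraction/residue identity --- so you have essentially supplied a self-contained version of the cited proof rather than a genuinely new route.
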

\begin{proof} This identity is due to Tracy and Widom, see e.g.\ \cite[Section 5]{TW_review}.
Another proof can be found in \cite[Section 7.6]{BCPS2}.
\end{proof}
\begin{lemma} \label{Lemma_Hall-littlewood}
Let $\alpha$ be a complex number. Then
$$
 Sym_N\left(\prod_{1\le i<j \le N} \frac{1-(\alpha+1) z_i+\alpha z_i z_j}{z_j-z_i}  \right)=\frac{(\alpha;\alpha)_N}{(1-\alpha)^N}.
$$
\end{lemma}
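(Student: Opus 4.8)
The plan is to prove the identity by induction on $N$, reducing the inductive step to a one-variable residue computation. Since both sides are rational functions of $z_1,\dots,z_N$ (and of $\alpha$), it suffices to treat generic $z_i$ — pairwise distinct and different from $1$ and $\alpha^{-1}$ — and $\alpha\neq 1$, the case $\alpha=1$ following by continuity. The base case $N=1$ is immediate, since $Sym_1$ of the empty product equals $1=\frac{(\alpha;\alpha)_1}{1-\alpha}$.

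For the inductive step I would split the sum over $\mathfrak S(N)$ defining $Sym_N$ according to which variable sits in the first slot. When a permutation places $z_a$ in the first slot, the product $\prod_{1\le i<j\le N}\frac{1-(\alpha+1)z_i+\alpha z_iz_j}{z_j-z_i}$, with variables permuted accordingly, factors as $\prod_{b\neq a}\frac{1-(\alpha+1)z_a+\alpha z_az_b}{z_b-z_a}$ — a function symmetric in the remaining $N-1$ variables — times the same type of product in the variables $\{z_b:b\neq a\}$. The first factor pulls out of the symmetrization over those $N-1$ variables, which by the inductive hypothesis equals $\frac{(\alpha;\alpha)_{N-1}}{(1-\alpha)^{N-1}}$. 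This yields
$$
Sym_N\left(\prod_{1\le i<j\le N}\frac{1-(\alpha+1)z_i+\alpha z_iz_j}{z_j-z_i}\right)=\frac{(\alpha;\alpha)_{N-1}}{(1-\alpha)^{N-1}}\cdot S_N,\qquad S_N:=\sum_{a=1}^N\ \prod_{b\neq a}\frac{1-(\alpha+1)z_a+\alpha z_az_b}{z_b-z_a},
$$
and it remains to show $S_N=\frac{1-\alpha^N}{1-\alpha}$.

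To evaluate $S_N$ I would use the elementary factorization $1-(\alpha+1)w+\alpha w^2=(1-w)(1-\alpha w)$, which shows that $\prod_{b\neq a}\bigl(1-(\alpha+1)z_a+\alpha z_az_b\bigr)=\Phi(z_a)$ for the single rational function $\Phi(w):=\frac{\prod_{b=1}^N(1-(\alpha+1)w+\alpha wz_b)}{(1-w)(1-\alpha w)}$. Writing $\omega(w):=\prod_{b=1}^N(w-z_b)$, each summand of $S_N$ is $(-1)^{N-1}\operatorname{Res}_{w=z_a}\frac{\Phi(w)}{\omega(w)}$, so $S_N=(-1)^{N-1}\sum_{a=1}^N\operatorname{Res}_{w=z_a}\frac{\Phi(w)}{\omega(w)}$. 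The rational function $\Phi/\omega$ has poles only at $w=z_1,\dots,z_N$, $w=1$, $w=\alpha^{-1}$ and $w=\infty$; since its denominator exceeds its numerator in degree by $2$, the residue at infinity vanishes, and the residues at $w=1$ and $w=\alpha^{-1}$ are computed directly: using $1-(\alpha+1)+\alpha z_b=-\alpha(1-z_b)$ and $1-(\alpha+1)\alpha^{-1}+z_b=z_b-\alpha^{-1}$, the products over $b$ cancel against the corresponding factors of $\omega$, giving $\operatorname{Res}_{w=1}\frac{\Phi}{\omega}=\frac{-(-\alpha)^N}{1-\alpha}$ and $\operatorname{Res}_{w=\alpha^{-1}}\frac{\Phi}{\omega}=\frac{(-1)^N}{1-\alpha}$. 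Since the residues of $\Phi/\omega$ over $\mathbb P^1$ sum to zero, $\sum_{a}\operatorname{Res}_{w=z_a}\frac{\Phi}{\omega}=\frac{(-1)^N(\alpha^N-1)}{1-\alpha}$ and hence $S_N=\frac{1-\alpha^N}{1-\alpha}$. Combining with the previous display gives $Sym_N(\cdots)=\frac{(\alpha;\alpha)_{N-1}}{(1-\alpha)^{N-1}}\cdot\frac{1-\alpha^N}{1-\alpha}=\frac{(\alpha;\alpha)_N}{(1-\alpha)^N}$, completing the induction. The main thing to be careful about is the sign bookkeeping in the residue computation, together with checking that $\Phi/\omega$ really has no poles beyond the four listed — both are routine.
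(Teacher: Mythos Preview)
Your proof is correct, and it takes a genuinely different route from the paper. The paper observes that the M\"obius-type change of variables $u_i=\frac{z_i-1}{z_i-\alpha^{-1}}$ transforms the summand into $\prod_{i<j}\frac{u_i-\alpha u_j}{u_i-u_j}$, and then quotes the classical Hall--Littlewood identity \cite[Chapter III, (1.4)]{M} for the symmetrized sum of that expression. Your argument is self-contained: you induct on $N$, peel off the contribution of the first slot, and reduce the inductive step to evaluating the partial-fraction-like sum $S_N=\sum_a\prod_{b\neq a}\frac{1-(\alpha+1)z_a+\alpha z_az_b}{z_b-z_a}$ by a residue count on $\mathbb P^1$. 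The paper's approach is shorter if one is willing to cite Macdonald, and it makes the connection to Hall--Littlewood polynomials transparent; your approach avoids any external reference and in effect reproves the Macdonald identity from scratch, at the cost of a slightly longer but entirely elementary computation.
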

\begin{proof} The identity \cite[Chapter III, (1.4)]{M} yields
$$
 Sym_N \left(\prod_{1\le i<j\le N} \frac{u_i-\alpha u_j}{u_i-u_j}\right) =
 \frac{(\alpha;\alpha)_N}{(1-\alpha)^N}.
$$
It remains to change the variables
$$
 u_i=\frac{z_i-1}{z_i-\alpha^{-1}},\quad\quad \frac{u_i-\alpha u_j}{u_i-u_j}=
 \frac{1-(1+\alpha)z_i+\alpha z_i z_j}{z_j-z_i}.\qedhere
$$
\end{proof}

\begin{lemma} \label{lemma_another_symmetrization}  Let $\alpha$ be a complex number, then
\begin{multline}
Sym_N\left(\prod_{1\le i<j\le N} \frac{z_j-z_i}{1-(\alpha+1)z_i+\alpha z_i z_j}
\prod_{i=1}^{N}\frac{1}{(\alpha z_i )\cdots(\alpha z_N )-1}  \prod_{i=1}^N (1-\alpha z_i )
\right)\\= Sym_N\left(\frac{(\alpha-1)^N}{(\alpha;\alpha)_N} \prod_{1\le i<j\le N}
\frac{z_j-z_i}{1-(\alpha+1)z_i+\alpha z_i z_j} \right).
\end{multline}
\end{lemma}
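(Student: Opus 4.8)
The plan is to prove the identity by induction on $N$, after exhibiting a multiplicative recursion satisfied by the extra product and ``peeling off'' one argument from the symmetrization; the whole argument then collapses onto a single auxiliary symmetrization identity, which is the real content. Write, for a parameter $\alpha$,
\[
 P_N(z_1,\dots,z_N)=\prod_{1\le i<j\le N}\frac{z_j-z_i}{1-(\alpha+1)z_i+\alpha z_iz_j},\qquad
 B_N(z_1,\dots,z_N)=\prod_{i=1}^N\frac{1-\alpha z_i}{(\alpha z_i)(\alpha z_{i+1})\cdots(\alpha z_N)-1},
\]
so the claim is $Sym_N(P_NB_N)=\frac{(\alpha-1)^N}{(\alpha;\alpha)_N}Sym_N(P_N)$. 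The structural observations are that
\[
 B_N(z_1,\dots,z_N)=\frac{1-\alpha z_1}{\alpha^Nz_1z_2\cdots z_N-1}\,B_{N-1}(z_2,\dots,z_N),\qquad
 P_N(z_1,\dots,z_N)=\Bigl(\prod_{j=2}^N\frac{z_j-z_1}{1-(\alpha+1)z_1+\alpha z_1z_j}\Bigr)P_{N-1}(z_2,\dots,z_N),
\]
and, crucially, that the denominator $\alpha^Nz_1z_2\cdots z_N$ in the first factorization is symmetric in all of $z_1,\dots,z_N$.

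Now split $Sym_N$ according to the value $m=\sigma(1)$: the remaining entries of $\sigma$ run over all orderings of the other variables, the factor $D_m:=\prod_{j\ne m}\frac{z_j-z_m}{1-(\alpha+1)z_m+\alpha z_mz_j}$ is symmetric in those variables and comes out of the tail sum, and, writing $Z:=z_1\cdots z_N$ and $\widehat S_m$ for the symmetrization over the $N-1$ variables $\{z_j:j\ne m\}$, the recursions above give
\[
 Sym_N(P_NB_N)=\sum_{m=1}^N\frac{1-\alpha z_m}{\alpha^NZ-1}\,D_m\,\widehat S_m(P_{N-1}B_{N-1}),\qquad
 Sym_N(P_N)=\sum_{m=1}^N D_m\,\widehat S_m(P_{N-1}),\qquad
 Sym_N(z_1P_N)=\sum_{m=1}^N z_m D_m\,\widehat S_m(P_{N-1}).
\]
The base case $N=1$ is immediate ($Sym_1(P_1)=1$ and $Sym_1(P_1B_1)=\frac{1-\alpha z_1}{\alpha z_1-1}=-1=\frac{\alpha-1}{(\alpha;\alpha)_1}$). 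Assuming the lemma for $N-1$, i.e. $\widehat S_m(P_{N-1}B_{N-1})=\frac{(\alpha-1)^{N-1}}{(\alpha;\alpha)_{N-1}}\widehat S_m(P_{N-1})$ for every $m$, substituting into the first formula, writing $1-\alpha z_m$ additively, and using the other two formulas together with $\frac{(\alpha-1)^{N}}{(\alpha;\alpha)_{N}}=\frac{\alpha-1}{1-\alpha^N}\cdot\frac{(\alpha-1)^{N-1}}{(\alpha;\alpha)_{N-1}}$, one finds that the lemma for $N$ is equivalent to the single \textbf{auxiliary identity}
\[
 Sym_N(z_1P_N)=\frac{(1-\alpha^{N-1})+\alpha^{N-1}(1-\alpha)\,z_1z_2\cdots z_N}{1-\alpha^N}\,Sym_N(P_N).
\]

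Everything up to this point is bookkeeping; establishing the auxiliary identity is the step I expect to demand genuine work. The route I would pursue is the change of variables $u_i=\frac{z_i-1}{z_i-\alpha^{-1}}$ used in the proof of Lemma \ref{Lemma_Hall-littlewood}, under which $P_N$ becomes the Hall--Littlewood-type product $\prod_{i<j}\frac{u_i-u_j}{u_i-\alpha u_j}$, one has $z_1=\frac{1}{\alpha}+\frac{\alpha-1}{\alpha}\cdot\frac{1}{1-u_1}$, and $z_1\cdots z_N=\alpha^{-N}\prod_k\frac{\alpha-u_k}{1-u_k}$; the auxiliary identity then becomes the ``Tracy--Widom type'' symmetrization
\[
 Sym_N\Bigl(\frac{1}{1-u_1}\prod_{i<j}\frac{u_i-u_j}{u_i-\alpha u_j}\Bigr)=\frac{1-\prod_{k=1}^N\frac{\alpha-u_k}{1-u_k}}{1-\alpha^N}\,Sym_N\Bigl(\prod_{i<j}\frac{u_i-u_j}{u_i-\alpha u_j}\Bigr),
\]
which is of the same nature as Lemmas \ref{lemma_TW_identity} and \ref{Lemma_Hall-littlewood} (the latter is recovered on $u_1=\cdots=u_N$) and should follow by the same nested-contour/residue technique, with the off-diagonal residues cancelling after antisymmetrization exactly as in the proof of Lemma \ref{lemma_deformation}. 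Two alternatives: one may check the auxiliary identity as a polynomial identity, after clearing denominators by the symmetric polynomial $\prod_{a<b}(1-(\alpha+1)z_a+\alpha z_az_b)(1-(\alpha+1)z_b+\alpha z_az_b)$, by matching degrees and specializing at $z_k=0$ (which reduces it to $N-1$ variables); or one may bypass the induction altogether, expanding each factor of $B_N$ geometrically, $B_N=(-1)^N\prod_i(1-\alpha z_i)\sum_{0\le m_1\le\cdots\le m_N}\alpha^{m_1+\cdots+m_N}z_1^{m_1}\cdots z_N^{m_N}$, and recognizing the claim as a $q$-binomial (with $q=\alpha$) resummation — via Lemmas \ref{Lemma_q_binomial}--\ref{Lemma_q_binomial_2} — of the closed form for $Sym_N(P_N\,z_1^{m_1}\cdots z_N^{m_N})$, a Hall--Littlewood-type object; this last route matches the placement of the lemma after the $q$-binomial theorems. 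In all cases the hard part is the same: tracking which poles are crossed in the contour deformations and proving the spurious contributions cancel.
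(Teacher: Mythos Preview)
Your induction set-up and the reduction to the auxiliary identity
\[
 Sym_N(z_1P_N)=\frac{(1-\alpha^{N-1})+\alpha^{N-1}(1-\alpha)\,z_1\cdots z_N}{1-\alpha^N}\,Sym_N(P_N)
\]
are algebraically correct. But the proposal stops exactly at the point where the content lies: you do not prove this auxiliary identity, you only list possible attacks (the $u$-change of variables, degree-matching, a $q$-binomial resummation) without executing any of them. None of these routes is automatic. In the $u$-variables the identity does \emph{not} reduce to Lemmas~\ref{lemma_TW_identity} or~\ref{Lemma_Hall-littlewood}: those compute $Sym_N$ of the \emph{reciprocal} product $\prod_{i<j}\frac{u_i-\alpha u_j}{u_i-u_j}$, whereas your $P_N^u=\prod_{i<j}\frac{u_i-u_j}{u_i-\alpha u_j}$ sits on the other side, and $Sym_N(P_N^u)$ is not a constant. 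The degree-and-specialization sketch is also incomplete: specializing one $z_k=0$ lowers $N$ but does not pin down the full polynomial, and you have not argued the requisite degree bound. So as written there is a genuine gap: the induction has replaced one symmetrization identity by another of comparable depth, and that second identity remains unproved.

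The paper's route is quite different and avoids induction altogether. It observes that the desired statement is a change of variables $\xi_i=\alpha z_{N+1-i}$, $\tau=\alpha$ away from an identity established in \cite[Section~7.2]{BCS}, and that the latter is obtained by a direct combination of Lemmas~\ref{lemma_TW_identity} and~\ref{Lemma_Hall-littlewood}. Concretely, one evaluates the \emph{left} side of the lemma in closed form via the Tracy--Widom identity (Lemma~\ref{lemma_TW_identity}), and then recognizes that closed form as the constant $\frac{(\alpha-1)^N}{(\alpha;\alpha)_N}$ times $Sym_N(P_N)$ using the Hall--Littlewood identity (Lemma~\ref{Lemma_Hall-littlewood}); no recursion in $N$ and no auxiliary identity of your type is needed. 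If you want to salvage your approach, the cleanest fix is to abandon the induction and instead apply Lemma~\ref{lemma_TW_identity} directly to $Sym_N(P_NB_N)$ after the substitution $\xi_i=\alpha z_{N+1-i}$, which turns $B_N$ into exactly the telescoping product appearing there.
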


\begin{proof} The proof is a combination of Lemma \ref{lemma_TW_identity} and Lemma \ref{Lemma_Hall-littlewood}.
For more details we refer to \cite[Section 7.2]{BCS}, where it is shown that
\begin{multline} \label{eq_x7}
Sym_N\left(\prod_{1\le i<j\le N} \frac{\xi_j-\xi_i}{1-(1+ \tau^{-1})\xi_j+ \tau^{-1}\xi_i \xi_j}
\prod_{i=1}^{N}\frac{1}{(\xi_1)\cdots(\xi_i)-1}  \prod_{i=1}^N (1-\xi_i) \right)\\=
Sym_N\left(\frac{( \tau-1)^N}{(\tau; \tau)_N} \prod_{1\le i<j\le N} \frac{\xi_j-\xi_i}{1-(1+
\tau^{-1})\xi_j+ \tau^{-1}\xi_i\xi_j}
  \right).
\end{multline}
Setting $\tau=\alpha$ and $\xi_i=\alpha z_{k+1-i} $ in \eqref{eq_x7}, we get the desired
statement.
\end{proof}

\begin{lemma} \label{lemma_TW_identity_2} For any positive integers $m<n$ we have
\begin{multline*}
 \sum_{\substack{S\subset \{1,\dots,n\}\\ |S|=m}} \prod_{\substack{i\in S \\  j\in \{1,\dots,n\}\setminus
 S}} \frac{1-(1+\alpha)z_i+\alpha z_i z_j}{z_j-z_i} \left(1-\prod_{j\in \{1,\dots,n\}\setminus S} z_j
 \right)\\= \alpha^{m(n-m)} {n-1 \choose m}_{\alpha^{-1}} \left(1-\prod_{i=1}^n z_i\right).
\end{multline*}
\end{lemma}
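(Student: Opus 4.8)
\emph{Plan.} Write $f_S=\prod_{i\in S,\,j\notin S}\frac{1-(1+\alpha)z_i+\alpha z_iz_j}{z_j-z_i}$ for an $m$-element subset $S\subset\{1,\dots,n\}$ (where $j\notin S$ always means $j\in\{1,\dots,n\}\setminus S$), and set $\Phi:=\sum_{|S|=m}f_S\prod_{j\notin S}z_j$. The first move is to record that $\sum_{|S|=m}f_S={n\choose m}_\alpha$, so that the left-hand side of the lemma is ${n\choose m}_\alpha-\Phi$; I would then show that $\Phi$ is the surprisingly simple polynomial ${n-1\choose m-1}_\alpha+\alpha^m{n-1\choose m}_\alpha\,z_1\cdots z_n$, after which the identity drops out of $q$-Pascal. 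The fact $\sum_{|S|=m}f_S={n\choose m}_\alpha$ I would obtain by splitting the $\mathfrak S(n)$-symmetrization in Lemma \ref{Lemma_Hall-littlewood}: for $\sigma\in\mathfrak S(n)$ the product $\prod_{a<b}\frac{1-(1+\alpha)z_{\sigma(a)}+\alpha z_{\sigma(a)}z_{\sigma(b)}}{z_{\sigma(b)}-z_{\sigma(a)}}$ factors over the ranges $a<b\le m$, $a\le m<b$, and $m<a<b$; the two outer factors depend only on the unordered sets $\{z_{\sigma(1)},\dots,z_{\sigma(m)}\}$ and $\{z_{\sigma(m+1)},\dots,z_{\sigma(n)}\}$, the middle factor equals $f_{S(\sigma)}$ with $S(\sigma):=\{\sigma(1),\dots,\sigma(m)\}$, and summing over all $\sigma$ with $S(\sigma)$ fixed lets Lemma \ref{Lemma_Hall-littlewood} (in $m$ and in $n-m$ variables) collapse the outer factors to the constants $(\alpha;\alpha)_m/(1-\alpha)^m$ and $(\alpha;\alpha)_{n-m}/(1-\alpha)^{n-m}$. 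Comparing with $Sym_n$ of the whole product, which Lemma \ref{Lemma_Hall-littlewood} gives as $(\alpha;\alpha)_n/(1-\alpha)^n$, yields $\sum_{|S|=m}f_S={n\choose m}_\alpha$ (the same splitting carrying along the symmetric weight $z_{m+1}\cdots z_n$ also identifies $Sym_n$ of $\prod_{a<b}\frac{1-(1+\alpha)z_a+\alpha z_az_b}{z_b-z_a}\,z_{m+1}\cdots z_n$ with a constant multiple of $\Phi$, but I will work with $\Phi$ directly).

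Next I would establish three structural properties of $\Phi$. First, $\Phi$ is \emph{symmetric} in $z_1,\dots,z_n$: permuting the variables merely permutes the subsets $S$. Second, $\Phi$ is a \emph{polynomial}: its only candidate poles lie on the hyperplanes $z_i=z_j$, where each $f_S$ has at most a simple pole, and pairing $S$ (with $i\in S$, $j\notin S$) against $(S\setminus\{i\})\cup\{j\}$ the residues cancel, since the non-singular factors of $f_S$ and of $f_{(S\setminus\{i\})\cup\{j\}}$, together with the monomials $\prod_{j'\notin S}z_{j'}$, agree on $\{z_i=z_j\}$ while the two singular factors have opposite residues. Third, $\Phi$ is \emph{multilinear}, i.e.\ of degree $\le1$ in each $z_i$: as a rational function of a single $z_i$, each summand $f_S\prod_{j\notin S}z_j$ is bounded at infinity when $i\in S$ and is linear when $i\notin S$ (then $z_i$ also occurs in $\prod_{j\notin S}z_j$). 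Hence $\Phi=\sum_{k=0}^{n}\gamma_k\,e_k(z_1,\dots,z_n)$ with $e_k$ the elementary symmetric polynomials.

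Now the key specialization: put $z_n=0$. Every summand with $n\notin S$ dies, since $z_n$ is then a factor of $\prod_{j\notin S}z_j$; and for $n\in S$ the factors of $f_S$ indexed by $i=n$ become $\frac{1-(1+\alpha)z_n+\alpha z_nz_j}{z_j-z_n}\big|_{z_n=0}=1/z_j$, which exactly cancel the surviving $\prod_{j\notin S}z_j$, leaving the $(n-1)$-variable quantity $f_{S\setminus\{n\}}$. Thus $\Phi|_{z_n=0}=\sum_{|S'|=m-1,\,S'\subset\{1,\dots,n-1\}}f_{S'}={n-1\choose m-1}_\alpha$ by the already-proven identity in $n-1$ variables. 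Comparing with $\Phi|_{z_n=0}=\sum_{k=0}^{n-1}\gamma_k e_k(z_1,\dots,z_{n-1})$ forces $\gamma_0={n-1\choose m-1}_\alpha$ and $\gamma_1=\dots=\gamma_{n-1}=0$, so $\Phi={n-1\choose m-1}_\alpha+\gamma_n\,z_1\cdots z_n$. To pin down $\gamma_n$ I would extract the coefficient of $z_n$ by letting $z_n\to\infty$: only summands with $n\notin S$ survive division by $z_n$, and for them $f_S\to\alpha^m\bigl(\prod_{i\in S}z_i\bigr)f_S^{(n-1)}$ while $\prod_{j\notin S}z_j=z_n\prod_{j\in\{1,\dots,n-1\}\setminus S}z_j$, so $\lim_{z_n\to\infty}\Phi/z_n=\alpha^m\bigl(\sum_{|S|=m,\,S\subset\{1,\dots,n-1\}}f_S^{(n-1)}\bigr)z_1\cdots z_{n-1}=\alpha^m{n-1\choose m}_\alpha\,z_1\cdots z_{n-1}$, i.e.\ $\gamma_n=\alpha^m{n-1\choose m}_\alpha$. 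Therefore the left-hand side of the lemma equals ${n\choose m}_\alpha-{n-1\choose m-1}_\alpha-\alpha^m{n-1\choose m}_\alpha z_1\cdots z_n$; the $q$-Pascal recurrence ${n\choose m}_\alpha={n-1\choose m-1}_\alpha+\alpha^m{n-1\choose m}_\alpha$ reduces this to $\alpha^m{n-1\choose m}_\alpha(1-z_1\cdots z_n)$, and the Gaussian-binomial symmetry ${n-1\choose m}_\alpha=\alpha^{m(n-1-m)}{n-1\choose m}_{\alpha^{-1}}$ rewrites $\alpha^m{n-1\choose m}_\alpha=\alpha^{m(n-m)}{n-1\choose m}_{\alpha^{-1}}$, which is exactly the claimed right-hand side.

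The step I expect to require the most care is the polynomiality of $\Phi$, i.e.\ the cancellation of the residues along $\{z_i=z_j\}$. This is the same "exchange" computation that underlies the proofs of Lemmas \ref{lemma_TW_identity} and \ref{Lemma_Hall-littlewood}, but one has to match, in the two paired summands, all of the non-singular factors and both monomial weights on the locus $z_i=z_j$ in order to see that the simple-pole residues are negatives of each other; without polynomiality the innocuous-looking specialization $z_n=0$ would not be legitimate, and that specialization is what makes the whole argument short. Everything else is routine manipulation of $q$-binomial coefficients.
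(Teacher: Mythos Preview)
Your argument is correct. The reduction to computing $\Phi=\sum_{|S|=m}f_S\prod_{j\notin S}z_j$ is clean, and the three structural claims---symmetry, polynomiality via the residue cancellation under $S\leftrightarrow(S\setminus\{i\})\cup\{j\}$, and multilinearity from the degree count at infinity---pin $\Phi$ down to a linear combination of $e_0,\dots,e_n$. The two specializations $z_n=0$ and $z_n\to\infty$ then determine all the coefficients, and the $q$-Pascal and symmetry identities you invoke are the standard ones. I checked the residue matching and the limits carefully and they go through as you describe; the one subtlety worth writing out in a final version is that on $\{z_i=z_j\}$ the monomials $\prod_{j'\notin S}z_{j'}$ and $\prod_{j'\notin S'}z_{j'}$ agree because swapping $z_i$ for $z_j$ has no effect there, which you mention but only implicitly.

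As for comparison with the paper: the paper does not prove this lemma at all---it simply cites Tracy and Widom \cite[(1.9)]{TW_determ}. Their original argument is close in spirit to yours (show the left-hand side is a polynomial in the $z_i$ with controlled degree, then identify it by specializations and a recursion in $n$), so your proof is essentially a streamlined, self-contained version of the Tracy--Widom computation. The use of Lemma~\ref{Lemma_Hall-littlewood} to evaluate $\sum_S f_S$ directly is a nice shortcut that avoids setting up a separate induction for that piece.
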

\begin{proof} This identity is due to Tracy and Widom, see \cite[(1.9)]{TW_determ}.
\end{proof}

\subsection{Distribution of a single particle}

For a set (i.e.\ event) $\mathcal A$, let $P_\y(\mathcal A;t)$ have the meaning
 $$
  P_\y(\mathcal A;t)=\sum_{\x\in\mathcal A} \T^{(N)}_t(\y\to\x ;1,1,b_1,b_2,1-b_1,1-b_2).
 $$

\begin{theorem} \label{theorem_particle_distribution}
Fix $N>0$, stochastic parameters of the six--vertex model $b_1$, $b_2$, and a positive integer
$t$; set $\tau=b_2/b_1$ Then for $1\le m \le N$ we have
\begin{multline}
\label{eq_Transfer_marginal} P_\y(x_m=x;t)=(-1)^{m-1} \tau^{m(m-1)/2} \sum_{m\le k \le
N}\sum_{|S|=k} \tau^{\kappa(S,\mathbb Z_{>0})-mk-k(k-1)/2} { {k-1} \choose
{m-1}}_{\tau}\\
\times \frac{1}{(2\pi\ii)^k} \oint \dots \oint \prod_{i,j\in S,\, i<j}
\frac{z_j-z_i}{1-(1+\tau^{-1})z_i+\tau^{-1}z_iz_j} \\ \times  \frac{1-\prod_{i\in S}
z_i}{\prod_{i\in S} (1-z_i)} \prod_{i\in S} z_i^{x-y_i-1}\left(
\frac{b_1+(1-b_1-b_2)z_i^{-1}}{1-b_2z_i^{-1}}\right)^t dz_i,
\end{multline}
where the summation goes over $S\subset\{1,2,\dots,N\}$ of size $k$, $\kappa(S,\mathbb Z_{>0})$ is
the sum of elements in $S$, and contours are positively oriented large circles of equal radius
which contain all singularities of the integrand.
\end{theorem}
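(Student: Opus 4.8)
The plan is to follow the strategy of Tracy and Widom \cite{TW_determ}, adapted from ASEP to the six--vertex transfer matrix, using the symmetrization identities collected above. One starts from the representation of $\T^{(N)}_t(\y\to\x)$ in Corollary \ref{corollary_stochastic_transfer} and sums it over all $\x=(x_1<\dots<x_N)\in\W_N$ with $x_m=x$. Since the transfer matrix moves particles only to the right, so that $x_i\ge y_i$ whenever $\T^{(N)}_t(\y\to\x)\ne 0$, the coordinates $x_1<\dots<x_{m-1}$ are confined to the finite range between the $y_i$'s and $x$, while $x_{m+1}<\dots<x_N$ range over the infinite set of integers exceeding $x$. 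The resulting $N$--fold sum of the contour integrand converges only conditionally on the large circles $C_R$, so the first genuine step is to invoke Lemma \ref{lemma_deformation} (with $f$ as in \eqref{eq_f_definition}, with $G$ absorbing the remaining $z$--dependent prefactors, and with a suitable number of deformed variables) to pull the relevant contours in to a medium circle $C_{r'}$, $r'<1$; on these contours the geometric series converge absolutely and summation may be interchanged with integration. The content of Lemma \ref{lemma_deformation} is precisely that nothing is lost in this deformation, the potential residues cancelling by skew--symmetry in the permutation sum.

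Next I would carry out the geometric summations explicitly. Summing the monomials $\prod z_{\sigma(i)}^{x_i}$ over the infinite staircase $x<x_{m+1}<\dots<x_N$ produces an overall power $\bigl(\prod z_{\cdot}\bigr)^{x}$ together with a rational factor whose denominators have the nested form $1-z_{\cdot}z_{\cdot}\cdots$, and the (effectively one--sided) summation over $x_1<\dots<x_{m-1}$ contributes analogous factors together with the cutoff powers $z_{\cdot}^{x}$ and $z_{\cdot}^{-y_i}$. At this stage the sum over $\sigma\in\mathfrak S(N)$ is organized, for each fixed assignment of variables to the ``left'', ``middle'' ($=x$) and ``right'' roles, as a symmetrization over the internal orderings. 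Applying Lemma \ref{lemma_TW_identity} with $\alpha=\tau^{-1}$ collapses the nested staircase denominators coming from the right coordinates into a clean product $\prod\frac{1}{1-z_{\cdot}}$, and Lemma \ref{lemma_another_symmetrization} together with Lemma \ref{Lemma_Hall-littlewood} disposes of the analogous structure attached to the left coordinates, producing the factor $\frac{1}{\prod_{i\in S}(1-z_i)}$ and the relevant $(\tau;\tau)$--type constants.

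The final step is the reorganization of the remaining combinatorial sum into the displayed subset form. The index set $\{1,\dots,N\}$ splits into a set $S$ of ``active'' variables of size $k$ (with $m\le k\le N$) and its complement of ``inactive'' ones; inside $S$ the remaining freedom is the choice of which $m-1$--element subset of $S$ serves the left coordinates and which element serves the middle one. Summing over the left subset by the Tracy--Widom identity Lemma \ref{lemma_TW_identity_2} (again with $\alpha=\tau^{-1}$, used with $n=k$ and subset size $m-1$) produces the factor $\binom{k-1}{m-1}_\tau$ and the numerator $1-\prod_{i\in S}z_i$. The contribution of the inactive variables assembles, via the $q$--binomial theorem Lemma \ref{Lemma_q_binomial} with $q=\tau$, into the sum over $k$ with the coefficients $\tau^{k(k-1)/2}$ and $\tau^{\kappa(S,\Z_{>0})-mk}$ occurring inside the summation; collecting all powers of $\tau$ and all signs gives the overall prefactor $(-1)^{m-1}\tau^{m(m-1)/2}$. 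One then deforms the active contours back out to a common large circle enclosing all poles of the integrand, which is harmless since by this point the integrand is a genuine rational function and no new residues intervene.

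The main obstacle is the convergence/justification step: rearranging the $N$--fold conditionally convergent sum of contour integrals so that summation and integration may legitimately be interchanged. This is exactly what Lemma \ref{lemma_deformation} supplies, but applying it correctly demands care about which contours are deformed, in which order, and what plays the role of the holomorphic factor $G$. Everything else --- the geometric summations, the repeated use of the symmetrization identities, and above all the precise tracking of the powers of $\tau$, the signs, and the $q$--binomial coefficients through the subset reorganization --- is routine but lengthy and error--prone; it is there, rather than in any single conceptual jump, that most of the care is needed.
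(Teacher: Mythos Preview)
Your overall strategy is indeed the Tracy--Widom approach, and the first phase (deform to $C_{r'}$ via Lemma \ref{lemma_deformation}, geometrically sum the right staircase $x<x_{m+1}<\dots<x_N$, then collapse the $\sigma_+$--symmetrization by Lemma \ref{lemma_TW_identity}) matches the paper. After that, however, your account diverges from the actual mechanism and contains a genuine gap.

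The sum over subsets $S$ of variable size $k\le N$ does \emph{not} arise from the $q$--binomial theorem. It arises from a second contour deformation: after applying Lemma \ref{lemma_TW_identity} one has factors $\prod_{i\in S_+}\frac{1}{1-z_i}$, and these are genuine simple poles at $z_i=1$. One must now deform the contours back \emph{out} to large circles $C_R$, because the summation over the left coordinates $x_1<\dots<x_{m-1}<x$ is carried down to $-\infty$ (harmlessly adding zeros) and needs $|z_i|>1$ to converge. In that outward deformation one crosses the poles at $z_i=1$ for each subset $\widetilde S_+\subset S_+$; the residues there are precisely what kill the ``inactive'' variables and generate the sum over $|S|=k$, together with the power $\tau^{\kappa(S,\mathbb Z_{>0})-k(k+1)/2}$ coming from $\Cr(1,z)=-\tau^{-1}(1-z)$. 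Justifying that \emph{only} the $z_i=1$ residues appear in this re-expansion is itself nontrivial: one uses Lemma \ref{Lemma_Hall-littlewood} to rebuild the $\sigma_+$ sum so that Lemma \ref{lemma_deformation} can be applied in reverse. Your assertion that the final outward deformation is ``harmless since \dots no new residues intervene'' is therefore exactly backwards --- those residues are the whole point.

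Two smaller discrepancies. For the left coordinates the paper does not use Lemma \ref{lemma_another_symmetrization}; on the large contours it sums $-\infty<x_1<\dots<x_{m-1}<x$ geometrically and applies Lemma \ref{lemma_TW_identity} a second time, in inverted variables $w_i=z_{m-i}^{-1}$. And your idea of exploiting the finite range $x_i\ge y_i$ for the left coordinates, while true at the level of the transfer matrix, does not lead anywhere clean inside the integral representation: individual $\sigma$--terms of \eqref{eq_transfer_TW_form_s} are nonzero even for $x_i<y_i$, so a finite truncation gives no usable closed form to symmetrize.
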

\noindent{\bf Remark.}
 The proof which we present below closely follows a similar proof of Tracy and Widom in the context of $N$--particle
ASEP, see \cite[Section 6]{TW_review}, \cite{TW_determ}.

The three key technical ingredients of the proof are Lemmas \ref{lemma_deformation},
\ref{lemma_TW_identity}, \ref{lemma_TW_identity_2}. Note that when we go outside stochastic
restriction \eqref{eq_GS_condition}, the quadratic cross-term in \eqref{eq_transfer_TW_form_s}
changes and it is unclear how to produce an analogue of Lemma \ref{lemma_TW_identity}.

It is, perhaps, natural to try to implement the approach developed in the study of ASEP for
stochastic six--vertex model, since ASEP itself can be viewed as a small $b_1$, $b_2$ limit of the
stochastic six--vertex model, see Section \ref{Section_P_as_interacting}. On the other hand, for
more general weights in the six--vertex model we are not aware of such a direct connection.

\begin{proof}[Proof of Theorem \ref{theorem_particle_distribution}]
We start by noting that both parts of \eqref{eq_Transfer_marginal} are analytic in $b_1$ and
$b_2$. Thus, it suffices to prove it for small real $b_1$, $b_2$, which we will. Such small $b_1$
and $b_2$ guarantee that the pole of $f$ in \eqref{eq_f_definition} occurs at $\mathfrak s$ with
$|\mathfrak s|=s$ sufficiently small so as to ensure the existence of contour $C_{r'}$ in Lemma
\ref{lemma_deformation}.

The probability $P_\y(x_m=x;t)$ is the sum $\T^{(N)}_t(\y\to\x;1,1,b_1,b_2,1-b_1,1-b_2)$ over
$\x=(x_1<\dots<x_N)$ such that $x_m=x$. Thus, it is a sum over $-\infty<x_1<\dots<x_{m-1}<x$ and
over $x<x_{m+1}<\dots<x_N$. We will do these two summations in the formula for $\T^{(N)}$ of
Corollary \ref{corollary_stochastic_transfer}. Note that for the former summation we need the
contours to be large circles $C_R$, $R>1$, while for the latter we need the contours to be small
circles $C_r$, $r<1$.

The formula of Corollary \ref{corollary_stochastic_transfer} for $\T^{(N)}_t$ includes the
summation over $\sigma\in \mathfrak S_N$. Set $S_-(\sigma)=\sigma(\{1,\dots,m-1\})$ and
$S_+(\sigma)=\sigma(\{m,\dots,N\})$. Then $(S_-(\sigma),S_+(\sigma))$ is a partition of
$\{1,\dots,N\}$ into two sets such that $|S_-(\sigma)|=m-1$ and $|S_+(\sigma)|=N-m+1$. We identify
a permutation $\sigma$ with the four-tuple $(S_-(\sigma),S_+(\sigma), \sigma_-,\sigma_+)$, where
$\sigma_{-}\in\mathfrak S(m-1)$ and $\sigma_+\in\mathfrak S(N-m+1)$ are the permutations obtained
by restricting $\sigma$ onto $\{1,\dots,m-1\}$ and $\{m,\dots,N\}$, respectively (we use the
unique monotonous identifications of the elements of $S_-$ and $S_+$ with $\{1,\dots,m-1\}$ and
$\{1,\dots,N-m+1\}$, respectively). Note that the sign ($-1$ raised to the power the number of
inversions) of $\sigma$ can be written as:
$$
 \sgn(\sigma)=(-1)^{\kappa(S_-,S_+)} \sgn(\sigma_-) \sgn(\sigma_+),\quad
 \kappa(S_-,S_+):=\#\{(i,j)\mid i\in S_-,\, j\in S_+,\, i\ge j\}.
$$

We sum over $\sigma$ and over $\x$ in two steps. First, we fix a partition $(S_-,S_+)$ and sum over
all $\sigma_-$, $\sigma_+$. On the second step we sum over the partitions $(S_-,S_+)$. Let us
proceed with the first step.

We start by using Lemma \ref{lemma_deformation} to deform all the integration contours to circles
of radius $r'<1$. Then we can sum $\prod_{i\ge m} z^{x_i}_{\sigma(i)}$ over $x_m=x$,
$x<x_{m+1}<\dots <x_N<\infty$ using geometric series
\begin{multline*}
 \sum_{x<x_{m+1}<\dots<x_N} \prod_{i=m}^N z^{x_i}_{\sigma(i)} \\=  \frac{z_{\sigma(m+1)} z^2_{\sigma(m+2)}\cdots z^{N-m-1}_{\sigma(N)}}
 {(1-z_{\sigma(m+1)} z_{\sigma(m+2)}\cdots z_{\sigma(N)})(1-z_{\sigma(m+2)}\cdots z_{\sigma(N)}) \cdots
 (1-z_{\sigma(N)})}  \prod_{i=m}^N z_{\sigma(i)}^x.
\end{multline*}

After that we do the summation over $\sigma_+$ using Lemma \ref{lemma_TW_identity} (with $N$
replaced by $N-m+1$ and $\alpha=\tau^{-1}$). We arrive at
\begin{multline}
\label{eq_distr_after_step_1} P_\y(x_m=x;t)=\sum_{S_{-},S_{+},\sigma_-} \,
\sum_{x_1<\dots<x_{m-1}<x} (-1)^{\kappa(S_-,S_+)} (-1)^{\sigma_-} \frac{1}{(2\pi\ii)^N}
\oint\dots\oint \left(1-\prod_{i\in S_+} z_i\right)
\\ \times
 \prod_{\substack{i<j\\
i,j\in S_+}} (z_j-z_i) \prod_{i\in S_+} \frac{z_i^x}{1-z_i} \cdot
 \frac{\prod\limits_{1\le i<j<m} \Cr(z_{\sigma(i)}, z_{\sigma(j)}) \prod\limits_{1\le i<m\le j \le N }
\Cr(z_{\sigma(i)}, z_{\sigma(j)})}{\prod\limits_{1\le i<j\le N} \Cr(z_i,z_j)} \\
\times\prod\limits_{i=1}^{m-1} z_{\sigma(i)}^{x_i} \, \prod_{i=1}^N z_i^{-y_i-1}\left(
\frac{b_1+(1-b_1-b_2)z_i^{-1}}{1-b_2z_i^{-1}}\right)^t  dz_i,
\end{multline}
where
$$
 \Cr(u,v)=1-(1+\tau^{-1})u+\tau^{-1} uv,
$$
and the integration goes over the contours $C_{r'}$ with $r'<1$.

\smallskip
Next we deform all the integration contours in \eqref{eq_distr_after_step_1} to circles $C_R$ of a
very large radius $R$. We claim that the only new residues which appear in the contour deformation
are those at poles $z_i=1$, $i\in S_+$. In order to see that, note that according to Lemma
\ref{Lemma_Hall-littlewood} we can restore the summation over $\sigma_+$ in
\eqref{eq_distr_after_step_1} as follows:
$$
 \prod_{\substack{i<j\\
i,j\in S_+}} (z_j-z_i)
=\frac{(1-\tau^{-1})^{N-m+1}}{(\tau^{-1};\tau^{-1})_{N-m+1}}\sum_{\sigma_+\in \mathfrak S(N-m+1)}
(-1)^{\sigma_+} \prod_{m\le i<j\le N} \Cr(z_{\sigma(i)},z_{\sigma(j)}).
$$
Hence, replacing $\sum_{S_{-},S_{+},\sigma_-}\sum_{\sigma_+}$ in \eqref{eq_distr_after_step_1} by
$\sum_{\sigma\in S(N)}$, we see that the integrals almost fall into the framework of Lemma
\ref{lemma_deformation}, with the only difference being the presence of extra singularities at
points $z_i=1$, $i\in S_+$. Since Lemma \ref{lemma_deformation} claims that all other residues
cancel out in the contour deformation, we conclude that the only new residues which appear in the
contour deformation are those at poles $z_i=1$, $i\in S_+$.

Observe that according to the argument of Lemma \ref{lemma_deformation} the residue of
\eqref{eq_distr_after_step_1} at a multidimensional pole $\{z_i=1, i\in \widetilde S_+\}$,
$\widetilde S_+\subset S_+$ should be taken sequentially in the decreasing order over the indices
$i\in \widetilde S_+$. In other words, we first take the residue at $z_i=1$ with $i$ being the
largest element of $\widetilde S_+$, then continue with the second largest element, etc. Such
residue is the expression of the same type as summand in \eqref{eq_distr_after_step_1}, but with
smaller $N$, with $S_+$ replaced by $S_+\setminus \widetilde S_+$, and with an additional
prefactor $(-\tau)^{\kappa(\{1,\dots,N\}\setminus \widetilde S_+ ,\widetilde
S_+)}\cdot(-1)^{\kappa(S'_+,\tilde S_+)}$ . This is because $\Cr(z,1)=1-z$,
$\Cr(1,z)=-\tau^{-1}(1-z)$, $(z_i-z_j)\bigr|_{z_j=1}=-(1-z_i)$, $(z_i-z_j)\bigr|_{z_i=1}=1-z_j$. 

Let  $S'_+=S_+\setminus \widetilde S_+$ and set $k=|S_- \cup S'_+|$. With this notation we rewrite
\begin{equation}
 \kappa(\{1,\dots,N\}\setminus\widetilde S_+,\widetilde S_+)= \kappa(S_- \cup S'_+,\widetilde S_+)
=\kappa(S_- \cup S'_+,\mathbb Z_{>0})-k(k+1)/2.
\end{equation}

After we deformed the contours in \eqref{eq_distr_after_step_1} to large circles $C_R$, we sum
over $x_1<x_2<\dots<x_{m-1}<x$ using the geometric series
\begin{multline*}
 \sum_{x_1<x_2<\dots<x_{m-1}<x} \prod_{i=1}^{m-1} z_{\sigma(i)}^{x_i}
 \\=  \frac{z_{\sigma(m-1)}^{-1} z^{-2}_{\sigma(m-2)}\cdots z^{-m+1}_{\sigma(1)}}
 {(1-z^{-1}_{\sigma(m-1)} z^{-1}_{\sigma(m-2)}\cdots z^{-1}_{\sigma(1)})(1-z^{-1}_{\sigma(m-2)}\cdots z^{-1}_{\sigma(1)}) \cdots
 (1-z^{-1}_{\sigma(1)})}  \prod_{i=1}^{m-1} z_{\sigma(i)}^x.
\end{multline*}
After that we sum over $\sigma_-$ again using Lemma \ref{lemma_TW_identity}, but this time in
inversed variables renamed in the opposite order (i.e.\ $w_{N+1-i}=z_i^{-1}$), which is the
identity (we will use it with $N$ replaced by $m-1$ and with $\alpha=\tau$)
\begin{multline}
\label{eq_x14}
 Sym_{N}\left(\prod_{1\le i<j \le N} \frac{1-(1+\alpha^{-1}) w_i+\alpha^{-1}w_i w_j}{w_j-w_i} \prod_{i=1}^{N} \frac{w_i^{i-N}}{
 1- w_{N+1-i}^{-1} w_{N-i}^{-1} \cdots w^{-1}_{1}} \right)\\= (-1)^N\alpha^{-N(N-1)/2}\prod_{i=1}^{N} \frac{w_i}{1-w_i}.
\end{multline}

 The result is (recall that $S'_+=S_+\setminus \widetilde S_+$):
\begin{multline}
\label{eq_distr_after_step_2}
 P_\y(x_m=x;t)=\sum_{S_{-},S_{+}} \sum_{S'_+\subset S_+} (-1)^{m-1}
\tau^{\kappa(S_-\cup S'_+,\mathbb{Z}_{>0})-(m-1)(m-2)/2-k(k+1)/2}
\\
\times\frac{1}{(2\pi\ii)^{|S_-\cup S'_+|}} \oint\dots\oint \prod_{\substack{i\in S_-\\ j\in S'_+}
}
\frac{\Cr(z_i,z_j)}{z_j-z_i} \left(1-\prod_{j\in S'_+} z_j\right) \\ \times \prod_{\substack{i<j\\
i,j \in S_-\cup S'_+} } \frac{z_j-z_i}{\Cr(z_i,z_j)} \prod_{i\in S_-\cup S'_+} z_i^{x-y_i-1}
\left( \frac{b_1+(1-b_1-b_2)z_i^{-1}}{1-b_2z_i^{-1}}\right)^t \frac{ dz_i}{1-z_i},
\end{multline}
where the integration goes over the contours $C_R$. Note that $\kappa(S_- \cup S'_+,\mathbb
Z_{>0})$ is the sum of elements of $S_-\cup S'_+$. Let us clarify how $(-1)^{m-1}$ in
\eqref{eq_distr_after_step_2} arose. We had the sign $(-1)^{\kappa(S_-,S_+)}$ remaining from
\eqref{eq_distr_after_step_1}, the sign $(-1)^{\kappa(S_-\cup S'_+,\widetilde S_+)+\kappa(S'_+\cup
\widetilde S_+)}$ from taking residues at $\{z_i=1,i\in \widetilde S_+\}$, the sign $(-1)^{m-1}$
from using \eqref{eq_x14} and the sign $(-1)^{\kappa(S_-,S'_+)}$ absorbed into two products
involving $(z_j-z_i)$ in \eqref{eq_distr_after_step_2}. Since
$$
\kappa(S_-,S_+)+ \kappa(S_-\cup S'_+,\widetilde S_+)+\kappa(S'_+\cup \widetilde
S_+)+\kappa(S_-,S'_+)
$$
is even, the total sign is $(-1)^{m-1}$.

 It remains to make a partial summation in \eqref{eq_distr_after_step_2}. For that set
$S=S_-\cup S'_+$, which gives $|S|=k$. And for each fixed $S$ sum over all possible $S_-$.
Essentially, we are summing the second line in \eqref{eq_distr_after_step_2}. This is precisely
Lemma \ref{lemma_TW_identity_2} with $\alpha=\tau^{-1}$, $n=k$ (note that the cardinality of the
set is $m$ in Lemma \ref{lemma_TW_identity_2}, while $|S_-|=m-1$) and we arrive at the desired
formula.
\end{proof}

\subsection{Exponential moments of current}
\label{Section_flux} Up to this point we have followed a generalized version of the approach of
\cite{TW_determ}, but now we want to switch the gears and connect to the approach of
\cite{BigMac}, \cite{BCS}. The starting points of \cite{BigMac} and \cite{BCS} were the usage of
Macdonald difference operators and \emph{duality} for Markov chains, respectively. We do not know
how to generalize either of these techniques to the six--vertex model, but to reach a similar
outcome we can instead perform the summations in the result of Theorem
\ref{theorem_particle_distribution}.

\smallskip
Define the functions $N_x$ and $\eta_x$ of $\x=(x_1<x_2<\dots)\in \W_N$ ($N$ can be either a
positive integer of $+\infty$) via
$$
N_x(\x)=\#\{i:x_i\le x\},\quad \quad  \eta_x(\x)=\begin{cases} 1,& x_i= x\text{ for some
}i,\\0,&\text{otherwise.}\end{cases}
$$

For a function $f:\W_N\to\mathbb R$, let $\mathbb E_\y(f;t)$ represent the time $t$ expectation of
$f$, i.e.
 $$
  \mathbb E_\y(f;t)=\sum_{\x\in \W_N} \T^{(N)}_t(\y\to\x; 1,1,b_1,b_2,1-b_1,1-b_2) f(\x),
 $$
where $\T^{(N)}_t$ is the $t$--th power of the $N$--particle transfer matrix (we again allow $N$
to be $+\infty$ here).

\begin{proposition} Fix $N>0$, stochastic parameters of the six--vertex model $b_1$, $b_2$, and a positive
integer $t$; set $\tau=b_2/b_1$. Then for $L=0,1,2,\dots$ and any integer $x$ we have
\begin{multline}
\label{eq_observable_any_initial}
 \mathbb E_\y \left(\tau^{L N_x} \eta_x;t\right)=
 \sum_{k=1}^{\min(L,N)}\left( \prod_{i=0}^{k-2}\left(1-\tau^{1-k+L} \cdot \tau^i\right)\right)\cdot \sum_{|S|=k}
\tau^{\kappa(S,\mathbb Z_{>0})-k(k-1)/2+L-k} \\
\times \frac{1}{(2\pi\ii)^k} \oint \dots \oint \prod_{i,j\in S, \, i<j}
\frac{z_j-z_i}{1-z_i(1+\tau^{-1})+\tau^{-1}z_i z_j} \\ \times  \frac{1-\prod_{i\in S}
z_i}{\prod_{i\in S} (1-z_i)} \prod_{i\in S}
z_i^{x-y_i-1}\left(\frac{b_1+(1-b_1-b_2)z_i^{-1}}{1-b_2 z_i^{-1}}\right)^t dz_i,
\end{multline}
where the summation goes over $S\subset\{1,2,\dots,N\}$ of size $k$, $\kappa(S,\mathbb Z_{>0})$ is
the sum of elements in $S$, and contours are positively oriented large circles of equal radius
which contain all singularities of the integrand. When $k=1$, we agree that product
$\prod_{i=0}^{k-2}$ in \eqref{eq_observable_any_initial} is $1$.
\end{proposition}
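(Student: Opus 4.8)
The plan is to reduce the quantity $\mathbb E_\y(\tau^{LN_x}\eta_x;t)$ to a finite combination of the single--particle marginals $P_\y(x_m=x;t)$ supplied by Theorem~\ref{theorem_particle_distribution}, and then collapse the resulting double sum using the $q$--binomial theorem (Lemma~\ref{Lemma_q_binomial}). The first step is purely combinatorial: since the coordinates of $\x=(x_1<x_2<\dots)\in\W_N$ are strictly increasing, the event $\{\eta_x(\x)=1\}$ is the disjoint union over $m\in\{1,\dots,N\}$ of the events $\{x_m=x\}$, and on $\{x_m=x\}$ one has $N_x(\x)=m$. Hence pointwise on $\W_N$,
$$
 \tau^{LN_x}\,\eta_x=\sum_{m=1}^{N}\tau^{Lm}\,\mathbf 1_{\{x_m=x\}},
$$
so that $\mathbb E_\y(\tau^{LN_x}\eta_x;t)=\sum_{m=1}^{N}\tau^{Lm}\,P_\y(x_m=x;t)$. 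As $N$ is finite, all sums below are finite and may be interchanged with the contour integrals without comment.

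Next I would substitute formula \eqref{eq_Transfer_marginal} for $P_\y(x_m=x;t)$. Writing $I_S$ for the $|S|$--fold contour integral that appears there and using the key point that $I_S$ does \emph{not} depend on $m$, I would then exchange the sums over $m$ and over $k:=|S|$; for a fixed $k\in\{1,\dots,N\}$ the index $m$ runs over $\{1,\dots,k\}$. This recasts the answer as $\sum_{k=1}^{N}\bigl(\sum_{|S|=k}\tau^{\kappa(S,\mathbb Z_{>0})-k(k-1)/2}\,I_S\bigr)\,\Sigma_k$, where
$$
 \Sigma_k=\sum_{m=1}^{k}(-1)^{m-1}\,\tau^{\,Lm+m(m-1)/2-mk}\binom{k-1}{m-1}_{\tau}.
$$
Reindexing by $j=m-1$ and regrouping the exponent of $\tau$ as $(L-k)+\tfrac{j(j-1)}2+j(1-k+L)$ brings this into the shape
$$
 \Sigma_k=\tau^{\,L-k}\sum_{j=0}^{k-1}\tau^{\,j(j-1)/2}\binom{k-1}{j}_{\tau}\bigl(-\tau^{\,1-k+L}\bigr)^{j},
$$
to which Lemma~\ref{Lemma_q_binomial} applies with $q=\tau$, $n=k-1$, $t=-\tau^{1-k+L}$, giving $\Sigma_k=\tau^{L-k}\prod_{i=0}^{k-2}(1-\tau^{1-k+L}\tau^{i})$. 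Feeding this back and absorbing $\tau^{L-k}$ into the exponent of $\tau$ reproduces the prefactor and integrand of \eqref{eq_observable_any_initial}.

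The last point is the truncation of the $k$--sum. For $L\ge1$ and $k\ge L+1$ the index $i=k-1-L$ lies in $\{0,\dots,k-2\}$, so one of the factors of $\prod_{i=0}^{k-2}(1-\tau^{1-k+L}\tau^{i})$ equals $1-\tau^{0}=0$ and the $k$--th summand vanishes; this lets one replace the upper limit $N$ by $\min(L,N)$, which is the claimed formula. I expect no conceptual difficulty here, since the whole argument is short; the main obstacle is simply the careful bookkeeping in the middle step: tracking the sign $(-1)^{m-1}$ and every $\tau$--exponent through the interchange of summations so that the $m$--sum lands exactly in the form demanded by the $q$--binomial theorem, and checking that the $\kappa(\cdot,\cdot)$--parity contributions accumulated in \eqref{eq_Transfer_marginal} do not spoil the signs.
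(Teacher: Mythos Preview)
Your proposal is correct and follows exactly the same route as the paper's own proof: express $\mathbb E_\y(\tau^{LN_x}\eta_x;t)=\sum_{m=1}^N\tau^{Lm}P_\y(x_m=x;t)$, insert formula \eqref{eq_Transfer_marginal}, swap the $m$-- and $k$--summations, and collapse the $m$--sum via the $q$--binomial theorem (Lemma \ref{Lemma_q_binomial}) to obtain $\tau^{L-k}\prod_{i=0}^{k-2}(1-\tau^{1-k+L+i})$, which vanishes for $k>L$. Your closing worry about ``$\kappa(\cdot,\cdot)$--parity contributions'' is unfounded: in \eqref{eq_Transfer_marginal} the quantity $\kappa(S,\mathbb Z_{>0})$ is simply the sum of the elements of $S$ and carries no sign, so the only sign to track is $(-1)^{m-1}$, which you have handled correctly.
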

\begin{proof}
We start from the result of Theorem \ref{theorem_particle_distribution}. Take $L\in \mathbb N$,
multiply \eqref{eq_Transfer_marginal} by $\tau^{mL}$ and sum over all $m=1,\dots,N$. The left-hand
side of \eqref{eq_Transfer_marginal} turns into
$$
 \mathbb E_\y \left(\tau^{L N_x} \eta_x;t\right).
$$
 The $m$--dependent part in the right-hand side of \eqref{eq_Transfer_marginal} is summed using
$q$--binomial theorem \eqref{eq_q_binomial} as follows:
\begin{multline}
\label{eq_qBinom_summation}
 \sum_{m=1}^k (-1)^{m-1} \tau^{m(m-1)/2-mk+mL} { {k-1} \choose
{m-1}}_{\tau}\\= \tau^{-k+L} \sum_{m=1}^k (-1)^{m-1} \tau^{(m-1)(m-2)/2}
\left(\tau^{1-k+L}\right)^{m-1} { {k-1} \choose
{m-1}}_{\tau}\\=\tau^{L-k}\prod_{i=0}^{k-2}\left(1-\tau^{1-k+L} \cdot \tau^i\right)
\end{multline}
Note that if $k>L$, then \eqref{eq_qBinom_summation} vanishes  and we arrive at the desired
formula.
\end{proof}

As a corollary, we obtain the following:

\begin{proposition} Fix stochastic parameters of the six--vertex model $b_1$, $b_2$, and a positive
integer $t$; set $\tau=b_2/b_1$ and let $step\in \W_\infty$ mean the step initial condition
$(1,2,3,\dots)$. Then for $L=0,1,2,\dots$ and any integer $x$ we have
\begin{multline}
\label{eq_observable_step_sum}
 \mathbb E_{step} \left(\tau^{L N_x};t\right)=1+
 \left(\tau^{-L}-1\right)\sum_{k=1}^{L}\left( \prod_{i=0}^{k-2}\left(1-\tau^{1-k+L} \cdot \tau^i\right)\right)
 \frac{\tau^{-k(k-1)/2+L-k}}{(\tau^{-1};\tau^{-1})_k}\\ \times \frac{1}{(2\pi\ii)^k}
  \oint \dots \oint \prod_{1\le i<j\le k}
\frac{z_j-z_i}{1-z_i(\tau^{-1}+1)+\tau^{-1}z_iz_j}\\\times \prod_{i=1}^k \frac{z_i^{x}
(\tau^{-1}-1)}{(1-z_i/\tau)(1-z_i)} \left(\frac{b_1+(1-b_1-b_2)z_i^{-1}}{1-b_2z_i^{-1}}\right)^t
dz_i,
\end{multline}
with integration over large positively oriented circles of equal radius containing all the
singularities of the integrand.
\end{proposition}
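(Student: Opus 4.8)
The strategy is to deduce this from the previous Proposition, which evaluates $\mathbb E_\y(\tau^{L N_x}\eta_x;t)$ for an arbitrary initial configuration $\y$, by specializing to the step initial condition and summing over the location variable. The passage from $\eta_x$ to $N_x$ is elementary: for the step initial condition every particle moves weakly to the right, so $N_{x'}=0$ for all $x'\le 0$ and $\tau^{L N_x}-1=\sum_{x'=-\infty}^{x}(\tau^{L N_{x'}}-\tau^{L N_{x'-1}})$ is a finite telescoping sum; since the particles occupy distinct integer sites, $N_{x'}-N_{x'-1}=\eta_{x'}\in\{0,1\}$, and inspecting the cases $\eta_{x'}=0$ and $\eta_{x'}=1$ gives the pointwise identity $\tau^{L N_{x'}}-\tau^{L N_{x'-1}}=(1-\tau^{-L})\,\tau^{L N_{x'}}\eta_{x'}$. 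Taking $\mathbb E_{step}(\,\cdot\,;t)$ therefore yields
\[
 \mathbb E_{step}(\tau^{L N_x};t)=1+(1-\tau^{-L})\sum_{x'\le x}\mathbb E_{step}(\tau^{L N_{x'}}\eta_{x'};t).
\]

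I would then substitute \eqref{eq_observable_any_initial} with the step initial condition (so $y_i=i$ and $N=\infty$, whence $\min(L,N)=L$) into the right-hand side and perform two summations. The sum over $x'$ affects only $\prod_{i\in S}z_i^{x'}$; as the contours are large circles, $|\prod_{i\in S}z_i|>1$ and $\sum_{x'\le x}(\prod_{i\in S}z_i)^{x'}$ converges uniformly on them, so it may be interchanged with the contour integral, after which the factor $\tfrac{1-\prod_{i\in S}z_i}{\prod_{i\in S}(1-z_i)}$ in \eqref{eq_observable_any_initial} collapses against the resulting $(\prod_{i\in S}z_i-1)^{-1}$, leaving $-\prod_{i\in S}(1-z_i)^{-1}$ and replacing $\prod_{i\in S}z_i^{x-y_i-1}$ by $\prod_{i\in S}z_i^{x-i}$. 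Next, writing $S=\{s_1<\dots<s_k\}$ and relabelling the integration variable $z_{s_a}$ as $z_a$ (so $\prod_{i<j,\,i,j\in S}$ becomes $\prod_{1\le a<b\le k}$ and the cross-term factors take exactly the form of the statement), the only remaining $S$-dependence is $\tau^{\kappa(S,\mathbb Z_{>0})}\prod_a z_a^{-s_a}=\prod_a(\tau/z_a)^{s_a}$, and by the substitution $s_a=a+u_1+\dots+u_a$ ($u_b\ge 0$) and geometric series one gets
\begin{multline*}
 \sum_{1\le s_1<\dots<s_k}\ \prod_{a=1}^k(\tau/z_a)^{s_a}=\frac{\prod_{a=1}^k(\tau/z_a)^{a}}{\prod_{b=1}^k\big(1-(\tau/z_b)(\tau/z_{b+1})\cdots(\tau/z_k)\big)}\\=\prod_{b=1}^k\big((\tau^{-1}z_b)(\tau^{-1}z_{b+1})\cdots(\tau^{-1}z_k)-1\big)^{-1},
\end{multline*}
the last equality being a direct simplification in which all the intervening powers of $\tau$ and of the $z_a$ cancel. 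No contour is deformed at any point, so the circles keep enclosing all singularities of the integrand.

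Finally, since the $k$ variables share a common contour, the antisymmetric factor $\prod_{a<b}\frac{z_b-z_a}{1-(1+\tau^{-1})z_a+\tau^{-1}z_az_b}$ may be symmetrized inside the integral; writing $\prod_i\frac{z_i^xf(z_i)}{1-z_i}=\prod_i(1-\tau^{-1}z_i)\cdot\prod_i\frac{z_i^xf(z_i)}{(1-\tau^{-1}z_i)(1-z_i)}$, with $f$ as in \eqref{eq_f_definition}, the integrand becomes precisely the left-hand side of Lemma \ref{lemma_another_symmetrization} with $\alpha=\tau^{-1}$, multiplied by the symmetric factor $\prod_i\frac{z_i^xf(z_i)}{(1-\tau^{-1}z_i)(1-z_i)}$; the lemma then replaces it by $\frac{(\tau^{-1}-1)^k}{(\tau^{-1};\tau^{-1})_k}$ times $\prod_{i<j}\frac{z_j-z_i}{1-(1+\tau^{-1})z_i+\tau^{-1}z_iz_j}\cdot\prod_i\frac{z_i^xf(z_i)}{(1-z_i/\tau)(1-z_i)}$, which is exactly the integrand of the statement. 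Collecting the scalars is then immediate: the $q$-binomial product $\prod_{i=0}^{k-2}(1-\tau^{1-k+L}\tau^i)$ and the exponent $-k(k-1)/2+L-k$ are carried over from \eqref{eq_observable_any_initial}, the factor $(\tau^{-1}-1)^k$ is absorbed into the integrand, and the two sign reversals — one from $\tfrac{1-\prod z_i}{\prod z_i-1}=-1$, one from $(1-\tau^{-L})$ becoming $(\tau^{-L}-1)$ — turn the ``$+(1-\tau^{-L})$'' of the telescoping identity into the ``$+(\tau^{-L}-1)$'' of the statement; the restriction to $1\le k\le L$ is forced because $\prod_{i=0}^{k-2}(1-\tau^{1-k+L}\tau^i)$ vanishes for $k>L$.

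I expect the only real obstacle to be recognizing that the subset summation collapses exactly into the form that is fed to Lemma \ref{lemma_another_symmetrization}, and then carrying out the attendant bookkeeping of signs, powers of $\tau$, and the Pochhammer factor without slips. The telescoping reduction and the two geometric summations are routine, and the argument parallels the passage from moment formulas to $q$-Laplace transforms in \cite{BCS}.
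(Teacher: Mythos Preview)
Your proposal is correct and follows essentially the same route as the paper: the telescoping identity $\tau^{LN_x}=1+(1-\tau^{-L})\sum_{x'\le x}\tau^{LN_{x'}}\eta_{x'}$, the geometric sum over $S\subset\{1,2,\dots\}$ yielding $\prod_b\big((\tau^{-1}z_b)\cdots(\tau^{-1}z_k)-1\big)^{-1}$, and the symmetrization via Lemma~\ref{lemma_another_symmetrization} with $\alpha=\tau^{-1}$ are exactly the three ingredients the paper uses. The only cosmetic difference is the order of operations---the paper first sums over $S$ and symmetrizes to reach~\eqref{eq_x1}, and only then performs the telescoping sum over the location variable, whereas you sum over $x'$ first; this reordering is harmless and the bookkeeping you outline is accurate.
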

\begin{proof}
 We start from \eqref{eq_observable_any_initial}. Lemma \ref{lemma_T_infty} yields that the
 desired expectation $\mathbb E_{step} \left(\tau^{L N_x};t\right)$ is given by the $N\to\infty$
 limit of \eqref{eq_observable_any_initial}. Sending $N\to\infty$ in the right--hand side of
 \eqref{eq_observable_any_initial} is straightforward. Further, for the step initial condition $y_i=i$ we need to
  sum over the sets $S\subset\{1,2,\dots\}$.  Basically, after identification of
variables, we are computing the sum
$$
\sum_{1\le S_1<S_2<\dots<S_k} \tau^{S_1+S_2+\dots+S_k} \prod_{i=1}^k
z_i^{-S_i}=\prod_{i=1}^{k}\frac{(\tau/z_i)\cdots(\tau/z_k)}{1-(\tau/z_i)\cdots(\tau/z_k)}=
\prod_{i=1}^{k}\frac{1}{(z_i/\tau)\cdots(z_k/\tau)-1}
$$
(for convergence purposes we need the contours to be of radius larger than $\tau$ here).

The next step is to note that since the integration in our formulas goes over the same contours,
the results depend only on the symmetrization of the integrand.

Thus, using Lemma \ref{lemma_another_symmetrization} (with $\alpha=\tau^{-1}$) we conclude that
\begin{multline}
\label{eq_x1}
 \mathbb E_{step} \left(\tau^{L N_x} \eta_x;t \right)=
 \sum_{k=1}^{L}\left( \prod_{i=0}^{k-2}\left(1-\tau^{1-k+L} \cdot \tau^i\right)\right)
 \frac{\tau^{-k(k-1)/2+L-k}}{(\tau^{-1};\tau^{-1})_k} \\ \times  \frac{1}{(2\pi\ii)^k}
  \oint \dots \oint \prod_{1\le i<j\le k} \frac{z_j-z_i}{1-z_i(\tau^{-1}+1)+\tau^{-1}z_iz_j} \\
\times \left(1-\prod_{i=1}^k z_i\right) \prod_{i=1}^k
\frac{z_i^{x-1}(\tau^{-1}-1)}{(1-z_i/\tau)(1-z_i)}
\left(\frac{b_1+(1-b_1-b_2)z_i^{-1}}{1-b_2z_i^{-1}}\right)^t dz_i,
\end{multline}
with integration over large circles of equal radius.

Next, note that
\begin{multline*}
 \tau^{L N_x}=\tau^{L N_{x-1}}+ \tau^{L N_x} \eta_x (1-\tau^{-L})=\dots=(1-\tau^{-L})
 \sum_{a=-\infty}^x \tau^{L N_a} \eta_a+\lim_{a\to-\infty} \tau^{L N_a}\\=
 (1-\tau^{-L})
 \sum_{a=-\infty}^x \tau^{L N_a} \eta_a+1.
\end{multline*}
Thus, summing \eqref{eq_x1} we get the desired formula.
\end{proof}

The answer looks more pleasant after changing variables and moving to a modified (but
disconnected) set of contours.

\begin{figure}[h]
\begin{center}
 {\scalebox{1.2}{\includegraphics{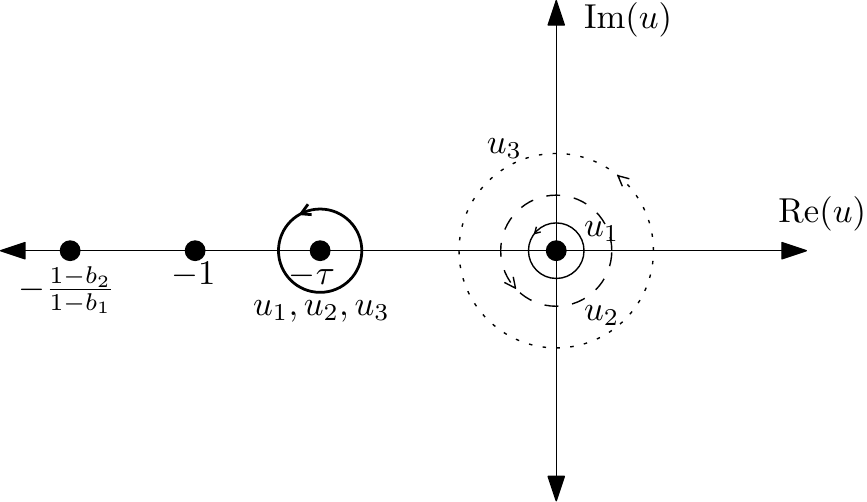}}}
 \caption{The contours for Theorem \ref{Theorem_observable_moment} with $N=3$. Each contour consists of 2 disjoint
 circles: one is a small loop around $-\tau$, another one is a small loop around $0$ (shown in
 solid, dashed, and dotted for $u_1$, $u_2$, and $u_3$, respectively)}
 \label{Figure_contours_disjoint}
\end{center}
\end{figure}

\begin{theorem} \label{Theorem_observable_moment}
Fix stochastic parameters of the six--vertex model $b_1$, $b_2$, and a positive integer $t$;
suppose $\tau=b_2/b_1<1$ and let
  $step\in \W_\infty$ mean the step initial condition $(1,2,\dots)$.
Then for $L=0,1,2,\dots$ and any integer $x$ we have
\begin{equation}
\label{eq_observable_step}
 \mathbb E_{step}\left(\tau^{L N_x};t\right)=\frac{\tau^{L(L-1)/2}}{(2\pi\ii)^L} \oint\dots\oint
 \prod_{A<B}\frac{u_A-u_B}{u_A-\tau u_B} \prod_{i=1}^L
 \left( \frac{1 +u_i\tau^{-1} \frac{1-b_1}{1-b_2}}{1
+u_i\frac{1-b_1}{1-b_2}} \right)^t \left(\frac{1+u_i}{1+u_i\tau^{-1}}\right)^x
 \frac{du_i}{u_i},
\end{equation}
where the positively oriented integration contour for $u_A$ includes $0$, $-\tau$,  but does not
include $-\frac{1-b_2}{1-b_1}$ or $\{\tau u_B\}_{B>A}$. (An example of such contours is shown in
Figure \ref{Figure_contours_disjoint}.)
\end{theorem}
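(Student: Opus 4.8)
The plan is to derive \eqref{eq_observable_step} from the already established formula \eqref{eq_observable_step_sum} by a fractional-linear change of variables followed by a contour-deformation and residue computation; the scheme parallels the manipulations carried out for $q$--TASEP and directed polymers in \cite{BigMac} and \cite{BCS}.

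First I would substitute $z_i=\frac{1+u_i}{1+u_i\tau^{-1}}$ (equivalently $u_i=\frac{1-z_i}{\tau^{-1}z_i-1}$) in each $k$--fold integral of \eqref{eq_observable_step_sum}. A short computation, using $\tau^{-1}=b_1/b_2$, shows that this map turns the building blocks of \eqref{eq_observable_step_sum} into those of \eqref{eq_observable_step}:
$$
\frac{b_1+(1-b_1-b_2)z_i^{-1}}{1-b_2z_i^{-1}}=\frac{1+u_i\tau^{-1}\frac{1-b_1}{1-b_2}}{1+u_i\frac{1-b_1}{1-b_2}},\qquad
\frac{z_j-z_i}{1-(1+\tau^{-1})z_i+\tau^{-1}z_iz_j}=\frac{u_i-u_j}{u_i-\tau^{-1}u_j},
$$
and, since $dz_i=\frac{1-\tau^{-1}}{(1+u_i\tau^{-1})^{2}}\,du_i$ and $\tau+u_i=\tau(1+u_i\tau^{-1})$, the factor $\frac{z_i^{x}(\tau^{-1}-1)}{(1-z_i/\tau)(1-z_i)}\,dz_i$ simplifies to exactly $\left(\frac{1+u_i}{1+u_i\tau^{-1}}\right)^{x}\frac{du_i}{u_i}$. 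Reversing the order of the variables, $u_i\mapsto u_{k+1-i}$, replaces $\prod_{i<j}\frac{u_i-u_j}{u_i-\tau^{-1}u_j}$ by $\tau^{k(k-1)/2}\prod_{A<B}\frac{u_A-u_B}{u_A-\tau u_B}$, which, combined with the explicit powers of $\tau$ present in \eqref{eq_observable_step_sum}, accounts for the overall prefactor $\tau^{L(L-1)/2}$ in \eqref{eq_observable_step}.

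The main step is then the contour analysis. Under $z\mapsto u$ the point $z=\tau$, which lies inside the large circle of \eqref{eq_observable_step_sum}, goes to $u=\infty$, while $z=\infty$ goes to $u=-\tau$ and $z=b_2$ goes to $u=-\frac{1-b_2}{1-b_1}$; hence the large positively oriented $z$--circle becomes a small negatively oriented loop around $-\tau$, i.e.\ a contour that (with positive orientation) encircles every singularity of the new integrand except a neighbourhood of $-\tau$. I would then deform this loop, without crossing $-\frac{1-b_2}{1-b_1}$, into the union of a small positively oriented circle $C_0$ around $0$ and a small positively oriented circle $C_{-\tau}$ around $-\tau$; the residues collected during the deformation --- at $u_i=0$, at the $x$--th order pole $u_i=-\tau$, and at the moving cross-term poles $u_i=\tau u_j$ and $u_i=u_j/\tau$ --- rearrange the whole family of integrals in \eqref{eq_observable_step_sum} (the $k$--fold terms for $k=1,\dots,L$, together with the additive $1$) into the single $L$--fold integral of \eqref{eq_observable_step} over the disconnected contours $C_0\sqcup C_{-\tau}$. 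Equivalently, and perhaps more cleanly, one expands the right-hand side of \eqref{eq_observable_step} as a sum over the set $T$ of indices with $u_i$ on $C_{-\tau}$: the indices outside $T$ become the $k=L-|T|$ honest integration variables, and the residues at $u_i=-\tau$, $i\in T$, have to be taken one at a time because the cross terms couple the $u_i$'s --- the nesting condition ``$u_A$ does not enclose $\tau u_B$ for $B>A$'' being exactly what makes this order unambiguous. The combinatorial factor generated by these iterated residues is a $\tau$--factorial/$q$--binomial sum that collapses, by the $q$--binomial theorem (Lemma \ref{Lemma_q_binomial}) and the definition of the $q$--Pochhammer symbol, to the coefficient $\bigl(\prod_{i=0}^{k-2}(1-\tau^{1-k+L}\tau^{i})\bigr)\,\tau^{-k(k-1)/2+L-k}/(\tau^{-1};\tau^{-1})_k$ (and the overall $\tau^{-L}-1$) appearing in \eqref{eq_observable_step_sum}.

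The hard part is precisely this last reorganization: tracking orientations under the fractional-linear map, deciding which poles are crossed in each deformation, and verifying that the sequential residue at $\{u_i=-\tau\}_{i\in T}$ together with the residues at the moving cross-term poles reproduces the $\tau$--power and $q$--binomial bookkeeping with the correct signs; I would organize it following \cite{BCS} and \cite{BigMac}. Finally, as in the opening lines of the proof of Theorem \ref{theorem_particle_distribution}, both sides of \eqref{eq_observable_step} are analytic in $b_1,b_2$, so it is enough to run the argument for $b_1,b_2$ small and real, where all the contours of Lemma \ref{lemma_deformation} and the ones above genuinely exist and are pairwise disjoint.
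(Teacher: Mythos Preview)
Your overall strategy is exactly the paper's: start from \eqref{eq_observable_step_sum}, perform the fractional--linear substitution $z_i=\frac{1+u_i}{1+u_i\tau^{-1}}$, and then reorganize contours via \cite[Lemma 4.21]{BCS}. Your algebra for the building blocks, the Jacobian, and the $\tau^{k(k-1)/2}$ coming from the variable reversal is correct, and after absorbing the $(-1)^k$ coming from the orientation flip the numeric prefactor in the $k$-th term becomes precisely $\tau^{k(k-1)/2}\binom{L}{k}_{\tau}$, matching \eqref{eq_x13} in the paper.

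The one genuine slip is in your description of the contour step. After the change of variables (and after the $(-1)^k$ is absorbed into orientation), each $k$--fold integral is over small \emph{positively} oriented circles around $-\tau$, not over a contour ``encircling everything except $-\tau$''. Consequently, when you split the contours of \eqref{eq_observable_step} into $C_0\sqcup C_{-\tau}$ and expand over the set $T$ of indices on $C_{-\tau}$, it is the variables in $T$ that survive as honest integration variables (so $k=|T|$, not $L-|T|$), while the variables on $C_0$ are eliminated by the \emph{simple} residue at $u_i=0$, where both $f(u_i)$ and the cross factors $\frac{u_A-u_i}{u_A-\tau u_i}$ evaluate to $1$ or a power of $\tau^{-1}$. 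Summing over which subset lies on $C_{-\tau}$, these powers of $\tau$ assemble into $\binom{L}{k}_{\tau}$; this is exactly the content of \cite[Lemma 4.21]{BCS}. Your plan to take residues at the order-$x$ pole $u_i=-\tau$ for $i\in T$ is both backwards and impractical: those poles are never crossed, and their residues would depend on $x$ in a way that cannot reproduce the clean $q$--binomial coefficient. Once you swap the roles of $C_0$ and $C_{-\tau}$ in your paragraph, your argument collapses to the paper's.

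The final appeal to analyticity in $b_1,b_2$ is unnecessary here (it was used earlier, in Theorem \ref{theorem_particle_distribution}, to ensure the contours of Lemma \ref{lemma_deformation} exist); the present identity holds for all admissible $b_1,b_2$ with $\tau<1$ directly.
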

\noindent{\bf Remark 1.} This is the first place where the condition $\tau<1$ is used.

\noindent{\bf Remark 2.}
 The ASEP limiting version of Theorem \ref{Theorem_observable_moment} was  proved in
 \cite[Theorem 4.20]{BCS} by  different methods.

\begin{proof}[Proof of Theorem \ref{Theorem_observable_moment}] We start from \eqref{eq_observable_step_sum} and make the change of variables
\begin{equation}
\label{eq_change_of_variables}
 z_i=\frac{1+u_i}{1+\tau^{-1}u_i},
 \quad \quad
 u_i=-\frac{1-z_i}{1-\tau^{-1}z_i}.
\end{equation}
Then
$$
 \frac{u_A-u_B}{u_A-\tau
 u_B}
 =\frac{(z_A-z_B)\tau^{-1}}{1 - z_B(1+\tau^{-1})+ \tau^{-1} z_A z_B },
\quad \quad
 \frac{b_1+(1-b_1-b_2)z^{-1}}{1-b_2z^{-1}}
=\frac{1 +\frac{u b_1}{b_2} \cdot \frac{1-b_1}{1-b_2}}{1 +u\frac{1-b_1}{1-b_2}}, $$
$$
 \frac{du}{u}=
 \frac{(\tau^{-1}-1)dz}{(1-z)(1-\tau^{-1}z)},
$$
and the large contours positively oriented contours in $z$ variables transform into small
negatively (i.e.\ clockwise) oriented contours around $-\tau$ in $u$--variables.

Thus, \eqref{eq_observable_step_sum} transforms into
\begin{multline}
\label{eq_x2} 1+ (\tau^{-L}-1)\sum_{k=1}^{L}\left( \prod_{i=0}^{k-2}\left(1-\tau^{1-k+L} \cdot
\tau^i\right)\right)
 \frac{\tau^{L-k}}{(\tau^{-1};\tau^{-1})_k} \\ \times  \frac{1}{(2\pi\ii)^k}
  \oint \dots \oint \prod_{1\le i<j\le k}
\frac{u_j-u_i}{u_j-\tau u_i}\, \prod_{i=1}^k \left(\frac{1+u_i}{1+u_i/\tau}\right)^{x}
\left(\frac{1+\frac{u_i b_1}{b_2}\cdot \frac{1-b_1}{1-b_2}}{1+u_i \frac{1-b_1}{1-b_2}}\right)^t
\frac{du_i}{u_i},
\end{multline}
with integration along the small contours around the pole at $-\tau$. Note that the factor
$\prod_{i<j}\frac{u_j-u_i}{u_j-\tau u_i}$ can be changed into $\prod_{i<j}\frac{u_i-u_j}{u_j-\tau
u_j}$ by renaming the contours. The numeric prefactor in the sum in \eqref{eq_x2} can be
transformed as follows:
\begin{equation}
\label{eq_x13} (\tau^{-L}-1) \prod_{i=0}^{k-2}\left(1-\tau^{1-k+L} \cdot \tau^i\right)
 \frac{\tau^{L-k}}{(\tau^{-1};\tau^{-1})_k}=\tau^{k(k-1)/2} { L\choose k}_{\tau} (-1)^k.
\end{equation}
We absorb the $(-1)^k$ factor in \eqref{eq_x13} into the orientation by making the $u$-contours
positively oriented. Now it remains to deform all the contours in \eqref{eq_x2} so that they
include both $0$ and $-\tau$, as in the statement of the Theorem. Note that in this deformation we
encounter residues and we should keep track of them. This is done in \cite[Lemma 4.21]{BCS}, which
shows that \eqref{eq_x2} is equal to the right-hand side of \eqref{eq_observable_step}.
\end{proof}

\subsection{Fredholm detetminants}

The aim of this section is to show that a certain generating function of the observables $\mathbb
E_{step}\left(\tau^{L N_x};t\right)$ of Theorem \ref{Theorem_observable_moment} can be written as
a Fredholm determinant. The argument here is very similar to those of \cite{BigMac}, \cite{BCS}
and we omit some technical details. A more detailed exposition can be found in \cite[Section
3.2]{BigMac} and \cite[Section 3 and Section 5]{BCS}. Below we record some notations and
background on Fredholm  determinants.

\begin{definition} \label{Def_fredholm}
 Let $K(x,y)$ be a meromorphic function of two complex variables, that we will refer to as a kernel, and let
$\Gamma\subset \mathbb C$ be a curve. Suppose that $K$ has no singularities on $\Gamma\times
\Gamma$. Then the Fredholm determinant of the kernel $K$, notation
$\det\bigl(1+K\bigr)_{\Gamma}$, is defined as the sum of the series of complex integrals
\begin{equation}
\label{eq_Fredholm_series}
 \det\bigl(1+K\bigr)_{\Gamma}=1+\sum_{n=1}^{\infty} \frac{1}{n! (2\pi \ii)^n} \int_{\Gamma}\dots\int_{\Gamma}
\det\bigl(K(z_i,z_j)\bigr)_{i,j=1}^n dz_1\cdots dz_n.
\end{equation}
\end{definition}
\noindent{\bf Remark.}
 Note two differences with the usual definition of the Fredholm determinant for the kernel: the
complex integration and prefactor $\frac{1}{(2\pi\ii)^n}$.  Further, the Fredholm determinant of a
kernel is typically identified with the Fredholm determinant of the corresponding integral
operator. We are not going to use any operator theory and, thus, such an identification is not
important to us. In what follows we will merely stick to the definition via the series expansion
\eqref{eq_Fredholm_series}.

\smallskip

The following statement, known as Hadamard's inequality (see e.g.\ \cite[Problem 2.1.P23]{HJ}), is
useful in the analysis of the series \eqref{eq_Fredholm_series}.

\begin{lemma}[Hadamard's inequality] Let $A=\{A_{ij}\}$ be a complex $N\times N$ matrix. Then
$$
 |\det(A)|\le \prod_{i=1}^N \sqrt{\sum_{j=1}^N (A_{ij})^2}.
$$
\end{lemma}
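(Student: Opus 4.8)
The plan is to normalise the rows of $A$ and then appeal to the arithmetic--geometric mean inequality applied to the eigenvalues of a Gram matrix. Write $A_1,\dots,A_N\in\mathbb C^N$ for the rows of $A$ and put $r_i=\bigl(\sum_{j=1}^N|A_{ij}|^2\bigr)^{1/2}$, so that the right-hand side of the claimed bound is $\prod_i r_i$. If some $r_i=0$ then $A$ has a zero row and $\det A=0$, so the inequality is trivial; hence I may assume all $r_i>0$. Let $B$ be the matrix whose $i$-th row is $A_i/r_i$. Since the determinant is multilinear and alternating in the rows, $|\det A|=\bigl(\prod_{i=1}^N r_i\bigr)|\det B|$, so it suffices to show $|\det B|\le 1$ for a matrix $B$ all of whose rows are unit vectors in $\mathbb C^N$.

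For such a $B$ I would consider the Hermitian positive semidefinite Gram matrix $G=BB^{*}$. Its $i$-th diagonal entry is $\langle A_i/r_i,A_i/r_i\rangle=1$, so $\operatorname{tr}G=N$. Denoting the eigenvalues of $G$ by $\mu_1,\dots,\mu_N\ge 0$, one has $\sum_i\mu_i=N$, and therefore, by AM--GM, $|\det B|^2=\det(BB^{*})=\prod_{i=1}^N\mu_i\le\bigl(\tfrac1N\sum_{i=1}^N\mu_i\bigr)^N=1$. Combining this with the displayed identity for $|\det A|$ proves the lemma.

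I expect no genuine obstacle here; the only points needing a line of justification are the degenerate case of a vanishing $r_i$ (disposed of above) and the identity $\det(BB^{*})=|\det B|^2$, which follows from $\det B^{*}=\overline{\det B}$ together with multiplicativity of $\det$. If one prefers a self-contained geometric argument avoiding the AM--GM step, one can instead run Gram--Schmidt on the rows: writing $A_i=u_i+\sum_{j<i}c_{ij}u_j$ with $u_1,\dots,u_N$ pairwise orthogonal, the transition matrix is unitriangular, so $|\det A|=\prod_i\|u_i\|$; and since $u_i$ is the orthogonal projection of $A_i$ onto the orthogonal complement of $\operatorname{span}(A_1,\dots,A_{i-1})$, one has $\|u_i\|\le\|A_i\|=r_i$, which gives the bound directly. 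Either way, the content is entirely standard linear algebra; the only reason it appears here is to have it on record for bounding the Fredholm series \eqref{eq_Fredholm_series}.
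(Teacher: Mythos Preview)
Your proof is correct. The paper does not actually supply a proof of this lemma; it simply records the statement with a textbook citation (Horn--Johnson). Your argument via the Gram matrix and AM--GM on its eigenvalues is a standard and complete justification, and the Gram--Schmidt alternative you sketch is equally valid. One small remark: the paper's displayed bound writes $(A_{ij})^2$ rather than $|A_{ij}|^2$, which for complex entries is a typographical slip; you have correctly read and proved the intended inequality with absolute values inside the sum.
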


\begin{corollary}
 If $\Gamma$ is a smooth curve of finite length, then the series in \eqref{eq_Fredholm_series} absolutely converges.
\end{corollary}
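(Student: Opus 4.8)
The plan is to bound the absolute value of the $n$-th term of the series \eqref{eq_Fredholm_series} and to show that the resulting numerical series converges. The first step is to extract boundedness of the kernel from the hypotheses: a smooth curve of finite length is compact, and since $K$ is meromorphic with no singularities on $\Gamma\times\Gamma$, it is holomorphic, hence continuous, on a neighborhood of the compact set $\Gamma\times\Gamma$. Therefore $M:=\sup_{(x,y)\in\Gamma\times\Gamma}|K(x,y)|$ is finite; write also $\ell$ for the length of $\Gamma$.

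The second step is Hadamard's inequality. Fixing $z_1,\dots,z_n\in\Gamma$ and applying the lemma to the matrix $A=\bigl(K(z_i,z_j)\bigr)_{i,j=1}^n$, the $i$-th row has Euclidean norm $\sqrt{\sum_{j=1}^n|K(z_i,z_j)|^2}\le\sqrt{n}\,M$, so $|\det A|\le n^{n/2}M^n$. Combining this with the elementary estimate $\bigl|\int_\Gamma g(z)\,dz\bigr|\le\ell\sup_\Gamma|g|$, applied in each of the $n$ integration variables, the $n$-th term of \eqref{eq_Fredholm_series} is bounded in absolute value by
$$
 \frac{1}{n!\,(2\pi)^n}\cdot n^{n/2}M^n\cdot \ell^n=\frac{1}{n!}\left(\frac{M\ell}{2\pi}\right)^n n^{n/2}.
$$

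The last step is to observe that $\sum_{n\ge1}\frac{1}{n!}\bigl(\frac{M\ell}{2\pi}\bigr)^n n^{n/2}$ converges: using $n!\ge(n/e)^n$, the general term is at most $\bigl(\frac{M\ell}{2\pi}\bigr)^n(e^2/n)^{n/2}$, whose consecutive-term ratio tends to $0$, so the ratio test applies (in fact the series defines an entire function of $M\ell$). This would give absolute convergence of \eqref{eq_Fredholm_series}. I do not expect a genuine obstacle: the argument is a routine combination of compactness, Hadamard's inequality, and a Stirling-type bound. The only point deserving a word of care is the implication ``finite length $\Rightarrow$ $\Gamma$ compact $\Rightarrow$ $K$ bounded on $\Gamma\times\Gamma$''; in every application in this paper $\Gamma$ is a finite union of circles, so this is automatic.
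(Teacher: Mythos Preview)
Your proof is correct and follows essentially the same route as the paper's: bound the kernel by a constant using the absence of singularities on the compact set $\Gamma\times\Gamma$, apply Hadamard's inequality to get the $n^{n/2}$ factor, and conclude via $n!\ge(n/e)^n$. The paper is terser about why the kernel is bounded and about the final convergence step, but the argument is the same.
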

\begin{proof} Since $K$ has no singularities on $\Gamma\times \Gamma$, there is a constant $B$
such that $|K(x,y)|\le B$ for all $(x,y)\in\Gamma\times \Gamma$. Then using  Hadamard's
inequality, the absolute value of the $n$th term in \eqref{eq_Fredholm_series} is bounded from
above by
$$
 \frac{1}{n!} \left(\frac{BS}{2\pi}\right)^n  n^{n/2},
$$
where $S$ is the length of $\Gamma$. Since $n!>(n/e)^n$, the convergence readily follows.
\end{proof}

Now we are ready to state the main result of this section, in which we adopt the notation of
Section \ref{Section_flux}.

\begin{theorem} \label{Theorem_observable_fredholm} Fix stochastic parameters of the six--vertex model $b_1$, $b_2$, and
a positive integer $t$; suppose $\tau=b_2/b_1<1$ and let $step\in \W_\infty$ mean the initial
condition $(1,2,3,\dots)$. Then for all $\zeta\in\mathbb C\setminus\mathbb R_{\ge 0}$  we have
\begin{equation}
\label{eq_observable_Fredholm}
 \mathbb E_{step}\left(\frac{1}{(\zeta\tau^{
 N_x};\tau)_\infty};t\right)=\det\left(I+K^{b_1,b_2}_\zeta\right)_{C_r},
\end{equation}
where $C_r$ is the positively oriented circle with center at the origin and of radius $r$
satisfying
$$
 \tau<r<\frac{1-b_2}{1-b_1} \tau ,
$$
and the kernel $K^{b_1,b_2}_\zeta$ is given by
\begin{equation}
\label{eq_kernel}
 K^{b_1,b_2}_\zeta (w,w')=\frac{1}{2\ii} \int_{1/2+\ii\mathbb R} \frac{(-\zeta)^s}{\sin(\pi s)}
 \cdot \frac{g(w; b_1,b_2, x, t)}{g(\tau^s w; b_1,b_2, x, t)} \cdot \frac{ ds}{\tau^s w-w'},
\end{equation}
where the integration contour is oriented from bottom to top and
$$
 g(z; b_1,b_2, x, t)=\left( 1 + z \tau^{-1} \frac{1-b_1}{1-b_2} \right)^t
 \left(\frac{1}{1+z\tau^{-1}}\right)^x.
$$
\end{theorem}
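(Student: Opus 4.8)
The plan is to derive \eqref{eq_observable_Fredholm} by recognizing the moment formula \eqref{eq_observable_step} of Theorem~\ref{Theorem_observable_moment} as exactly the input needed for the ``moments to Fredholm determinant'' scheme of \cite{BigMac} and \cite{BCS}, and then to invoke that scheme after checking its hypotheses for the specific function $g$ and contour $C_r$ at hand.

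\emph{Step 1 (from the $q$--Laplace transform to a moment series).} Since for the step initial condition $N_x$ is a bounded random variable, the left--hand side of \eqref{eq_observable_Fredholm} is meromorphic in $\zeta$ with poles only on $\mathbb R_{>0}$, so it suffices to prove the identity for $|\zeta|$ small and then analytically continue to $\mathbb C\setminus\mathbb R_{\ge 0}$. For $|\zeta|<1$ apply the $q$--binomial (Cauchy) identity of Lemma~\ref{Lemma_q_binomial_2} with $a=0$, $q=\tau$ (legitimate because $0<\tau^{N_x}\le 1$) to get $\dfrac{1}{(\zeta\tau^{N_x};\tau)_\infty}=\sum_{L\ge 0}\dfrac{\zeta^L\tau^{LN_x}}{(\tau;\tau)_L}$; the bound $|\zeta^L\tau^{LN_x}/(\tau;\tau)_L|\le|\zeta|^L/(\tau;\tau)_\infty$ makes the series dominated, so
$$
 \mathbb E_{step}\Bigl(\tfrac{1}{(\zeta\tau^{N_x};\tau)_\infty};t\Bigr)=\sum_{L\ge 0}\frac{\zeta^L}{(\tau;\tau)_L}\,\mathbb E_{step}(\tau^{LN_x};t),
$$
with $\mathbb E_{step}(\tau^{LN_x};t)$ given by \eqref{eq_observable_step}.

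\emph{Step 2 (recognizing the canonical form and applying the machinery).} With $g$ as in the statement one checks directly that $g(u)/g(\tau u)=\bigl(\tfrac{1+u\tau^{-1}(1-b_1)/(1-b_2)}{1+u(1-b_1)/(1-b_2)}\bigr)^{t}\bigl(\tfrac{1+u}{1+u\tau^{-1}}\bigr)^{x}$, so the integrand in \eqref{eq_observable_step} is exactly $\prod_{A<B}\tfrac{u_A-u_B}{u_A-\tau u_B}\prod_i\tfrac{g(u_i)}{g(\tau u_i)}\tfrac{du_i}{u_i}$, whence
$$
 \mathbb E_{step}(\tau^{LN_x};t)=\frac{\tau^{L(L-1)/2}}{(2\pi\ii)^L}\oint\!\cdots\!\oint\prod_{A<B}\frac{u_A-u_B}{u_A-\tau u_B}\prod_{i=1}^{L}\frac{g(u_i)}{g(\tau u_i)}\,\frac{du_i}{u_i}
$$
over the nested contours of Theorem~\ref{Theorem_observable_moment}. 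This is precisely the shape of moment formula for which \cite[Section 3.2]{BigMac} and \cite[Sections 3 and 5]{BCS} produce a Fredholm determinant. Their mechanism, applied here, proceeds as: (i) collapse the nested $u_A$--contours to a single circle $C_r$, keeping track of the residues crossed and combining $\prod_{A<B}\tfrac{u_A-u_B}{u_A-\tau u_B}$ with the prefactors $\tfrac{\tau^{L(L-1)/2}}{(\tau;\tau)_L}$ coming from the $q$--binomial expansion through a Cauchy determinant; (ii) replace the sum over $L$ by a single Mellin--Barnes contour integral in an auxiliary variable $s$, using that $\tfrac{\pi}{\sin\pi s}$ has simple poles at the integers with residue $(-1)^k$ at $s=k$, so that $\sum_{k\ge 1}(\cdots)\zeta^k=-\tfrac{1}{2\pi\ii}\int_{1/2+\ii\mathbb R}\tfrac{\pi(-\zeta)^s}{\sin\pi s}(\cdots)\,ds$ (this is where $\zeta\notin\mathbb R_{\ge 0}$ is used, to fix the principal branch of $(-\zeta)^s$ and keep the integrand integrable on the vertical line); (iii) read the outcome off as $\det\bigl(I+K^{b_1,b_2}_\zeta\bigr)_{C_r}$ with the kernel \eqref{eq_kernel}, in which $g(\tau^s w)$ appears in the denominator because of the $\tfrac{1}{g(\tau u_i)}$ in the moment integrand. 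The inequality $\tau<r<\tfrac{1-b_2}{1-b_1}\tau$ is dictated by these manipulations: $C_r$ must be large enough to enclose the poles at $u=0$ and $u=-\tau$ inherited from the moment contours, and small enough to leave $-(1-b_2)/(1-b_1)$ and the zeros of $g(\tau^s u)$ (which sit at modulus $\tau^{1/2}(1-b_2)/(1-b_1)>r$ for $s\in 1/2+\ii\mathbb R$) outside; together with $|\tau^s|=\tau^{1/2}<1$ on that line this also guarantees $\tau^s w\ne w'$ on $C_r\times C_r$, so that $K^{b_1,b_2}_\zeta$ has no singularities there and the Fredholm determinant of Definition~\ref{Def_fredholm} is well defined.

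\emph{Main obstacle.} The delicate part is the bookkeeping in steps (i)--(ii): when the nested $u_A$--contours of Theorem~\ref{Theorem_observable_moment} are merged into the single circle $C_r$ one crosses poles of $\tfrac{u_A-u_B}{u_A-\tau u_B}$, and the residue terms must be shown to reassemble precisely into the nested determinantal structure of the Fredholm expansion (a cancellation of the same flavour as in Lemma~\ref{lemma_deformation}); and the Mellin--Barnes substitution requires controlling the decay of $g(w)/g(\tau^s w)$ as $\Im s\to\pm\infty$ so that the $s$--integral converges and the contour may be introduced, and where needed deformed, without boundary contributions. These are exactly the estimates carried out in \cite{BigMac} and \cite{BCS}; since $g$ here is a ratio of integer powers of affine functions whose only pole is at $w=-\tau$ and whose only zero is at $w=-\tau(1-b_2)/(1-b_1)$, all of the hypotheses of those arguments are satisfied, and in keeping with the paper's stated practice I would finish by reduction to them rather than by reproducing the technical estimates.
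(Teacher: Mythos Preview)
Your proposal is correct and follows essentially the same route as the paper: expand the $q$--Laplace transform via Lemma~\ref{Lemma_q_binomial_2} into a moment series, identify the moments \eqref{eq_observable_step} as $g(u)/g(\tau u)$--integrals in the canonical form, and then apply the contour--collapse/Cauchy--determinant step (stated in the paper as Lemma~\ref{Lemma_integral_into_sum}) followed by the Mellin--Barnes summation (Lemma~\ref{Lemma_sum_into_fredholm}), with analytic continuation in $\zeta$ at the end. The only difference is cosmetic: the paper isolates your steps (i)--(iii) as two standalone lemmas before invoking them, whereas you describe them inline and cite \cite{BigMac}, \cite{BCS} directly.
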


The proof is based on two lemmas, for which we first need to introduce additional notations.

A partition $\lambda\vdash k$ of an integer $k>0$ is an ordered sequence of integers (``parts'')
$\lambda_1\ge\lambda_2\ge\dots\ge0$ such that $\sum_{i=1}^{\infty}\lambda_i=k$. The length
$\ell(\lambda)$ is the number of non-zero parts in $\lambda$ and the number $k$ is denoted
$|\lambda|$. An alternative encoding of $\lambda$ is $\lambda=1^{m_1} 2^{m_2}\dots$, which means
that $\lambda$ has $m_1$ parts equal to $1$, $m_2$ parts equal to $2$, etc. In particular, this
implies $|\lambda|=\sum_{i\ge 1} i m_i$.

\begin{lemma}\label{Lemma_integral_into_sum}
Fix $\alpha\in \C\setminus \{0\}$ and $0<\tau<1$. Consider a meromorphic function $f(z)$ which has
a pole at $\alpha$ but does not have any other poles in an open neighborhood of the line segment
connecting $\alpha$ to 0. For such a function and for any $k\geq 1$, define
\begin{equation}\label{eq_muk_1}
\mu_k= \frac{\tau^{\frac{k(k-1)}{2}}}{(2\pi \ii)^k} \int \cdots \int \prod_{1\leq A<B\leq k}
\frac{z_A-z_B}{z_A-\tau z_B} \prod_{i=1}^{k} f(z_i) \frac{dz_i}{z_i}
\end{equation}
where the integration contour for $z_A$ contains 0, $\alpha$ but does not include any other poles
of $f$ or $\{\tau z_B\}_{B>A}$. (For instance, this is the case in \eqref{eq_observable_step}.)
Then
\begin{multline}
\label{eq_muk_2}
\mu_k= (\tau;\tau)_k\sum_{\substack{\lambda\vdash k\\
\lambda=1^{m_1}2^{m_{2}}\cdots}} \frac{1}{m_1!m_2!\cdots} \, \frac{1}{(2\pi \ii)^{\ell(\lambda)}}
\\ \times \int_C \cdots \int_C \det\left[\frac{-1}{w_i
\tau^{\lambda_i}-w_j}\right]_{i,j=1}^{\ell(\lambda)} \prod_{j=1}^{\ell(\lambda)}  f(w_j)f(\tau
w_j)\cdots f(\tau^{\lambda_j-1}w_j) dw_j,
\end{multline}
where the integration contour $C$ for $w_j$ is a closed curve which contains 0, $\alpha$ and no
other poles of $f$; and its image under multiplication by any positive power of $\tau$ lies inside
$C$. (For instance, for $f(z)=g(z)/g(\tau z)$ with function $g$ of Theorem
\ref{Theorem_observable_fredholm}, and $\alpha=-\tau$, this is the contour $C_r$ of the same
theorem.)
\end{lemma}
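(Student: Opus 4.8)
The plan is to prove \eqref{eq_muk_2} by deforming the nested integration contours in the definition \eqref{eq_muk_1} of $\mu_k$ to the single common contour $C$, carefully collecting the residues generated along the way. Concretely, I would deform the contours one variable at a time --- say shrinking the outermost contour (for $z_k$) onto $C$, then the next one, and so on. Since $0$ and the pole $\alpha$ of $f$ lie inside every contour at every stage, and by hypothesis $f$ has no further poles near the segment joining $0$ to $\alpha$, the only poles encountered are those of the Cauchy-type factor $\prod_{A<B}\frac{z_A-z_B}{z_A-\tau z_B}$, located at $z_A=\tau z_B$ for $A<B$. The hypothesis that the image of $C$ under multiplication by any positive power of $\tau$ lies inside $C$ guarantees that once a variable has been brought to $C$ it produces no pole relative to a variable deformed later, so the procedure terminates and every intermediate deformation is legitimate.

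The residue bookkeeping is the heart of the argument. Each residue at a locus $z_A=\tau z_B$ ties two variables together, and iterating this produces \emph{strings}: a cluster of $\ell$ indices becomes specialized to a geometric progression $w,\tau w,\tau^2w,\dots,\tau^{\ell-1}w$ in a single free variable $w$ which remains to be integrated over $C$. The multiset of string lengths is exactly a partition $\lambda=1^{m_1}2^{m_2}\cdots$ of $k$, which is the source of the sum over $\lambda$ in \eqref{eq_muk_2}. Within a single string the restriction of $\prod_{A<B}\frac{z_A-z_B}{z_A-\tau z_B}$, together with the residue factors and $\prod_i\frac{dz_i}{z_i}$, collapses to $f(w)f(\tau w)\cdots f(\tau^{\lambda_j-1}w)\,dw$ times an explicit power of $\tau$ (essentially $\prod_{0\le a<b\le \ell-1}\frac{\tau^a-\tau^b}{\tau^a-\tau^{b+1}}$ in closed form). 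The factors coupling two different strings, of the shape $\prod_{a=0}^{\lambda_i-1}\prod_{b=0}^{\lambda_j-1}\frac{\tau^a w_i-\tau^b w_j}{\tau^a w_i-\tau^{b+1}w_j}$, telescope in the inner index and, after summing the residue contributions over the relative interleaving of the two strings, reassemble into the entry $\frac{-1}{w_i\tau^{\lambda_i}-w_j}$; the antisymmetrization over pairs of strings then turns the product of such entries into the determinant $\det\bigl[\frac{-1}{w_i\tau^{\lambda_i}-w_j}\bigr]_{i,j=1}^{\ell(\lambda)}$ of \eqref{eq_muk_2}.

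It remains to obtain the scalar prefactors $(\tau;\tau)_k$ and $\frac{1}{m_1!m_2!\cdots}$. The factor $\frac{1}{m_1!m_2!\cdots}$ appears because strings of equal length are interchangeable: after the final integration all $w_j$ run over the same contour $C$, so each unordered configuration of strings is overcounted by $m_1!m_2!\cdots$, and the determinant provides precisely the signs making this averaging consistent. The factor $(\tau;\tau)_k$ collects the $\tau$-powers and combinatorial multiplicities produced when choosing, at each deformation step, which pole to cross; equivalently it is the normalization appearing in the Hall--Littlewood symmetrization identities of Lemma \ref{Lemma_Hall-littlewood} and Lemma \ref{lemma_another_symmetrization}. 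Both of these computations are carried out in detail, in the closely related $q$--TASEP and ASEP settings, in \cite[Section 3.2]{BigMac} and \cite[Sections 3 and 5]{BCS}, and the present situation matches those once the conditions on $C$ are identified with the relevant loci; in particular, for $f(z)=g(z)/g(\tau z)$ and $\alpha=-\tau$ the contour $C_r$ of Theorem \ref{Theorem_observable_fredholm} satisfies all the requirements, as $\tau C_r\subset C_r$ and $C_r$ encircles $0$ and $-\tau$ but no other pole of $f$.

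The main obstacle I anticipate is exactly this residue bookkeeping: ensuring that every multi-dimensional residue is accounted for once, with the correct sign and power of $\tau$, and that the nested structure of the original contours is reproduced by the order in which residues are extracted within each string (largest index first, as in the proof of Lemma \ref{lemma_deformation}). Matching the combinatorial factors $(\tau;\tau)_k$ and $\frac{1}{m_1!m_2!\cdots}$ is the delicate point; the analytic side --- validity of the deformations and convergence of the resulting finite sum of contour integrals --- is routine given the hypotheses on $f$ and $C$.
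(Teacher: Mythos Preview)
Your proposal is correct and follows the same approach as the paper: the paper's own proof consists of the single sentence ``The proof is via residue calculus and it is given in \cite[Proposition 5.2]{BCS}'', and what you have written is precisely an outline of that residue computation, with the same citations. One small terminological slip: the deformation is an \emph{expansion} of the nested contours outward to the common contour $C$ (since $C$ must satisfy $\tau C\subset C$ and hence is larger than the original contours), not a shrinking; but this does not affect the substance of the argument.
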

\begin{proof}
The proof is via residue calculus and it is given in \cite[Proposition 5.2]{BCS}.
\end{proof}

\begin{lemma} \label{Lemma_sum_into_fredholm}
 Take $0<\tau<1$ and let $f$ and $g$ be two meromorphic functions such that $f(z)=g(z)/g(\tau z)$.
 Suppose that $f$ satisfies the assumptions of Lemma \ref{Lemma_integral_into_sum}, contour $C$ is as in that lemma
  and $\mu_k$ is given by \eqref{eq_muk_2}. Further, assume that for a certain $\delta$,  $0<\delta<1$, the expression
$$
\left| \frac{g(w)}{g(\tau^s w)} \cdot \frac{1}{\tau^s w- w'} \right|
$$
is uniformly bounded over $\Re(s)\ge \delta$, $w\in C$, $w'\in C$. Then for all sufficiently small
complex numbers $\zeta\in\mathbb C\setminus \mathbb R_{\ge 0}$ so that the series below converges,
we have
 \begin{equation}
 \label{eq_series_into_fredholm}
  \sum_{k=0}^{\infty} \frac{\mu_k \zeta^k}{(\tau;\tau)_k}=\det\bigl(1+K\bigr)_{C},
 \end{equation}
 where
 \begin{equation}
\label{eq_kernel_def}
  K(w,w')=\frac{1}{2\ii} \int_{\delta+\ii\mathbb R} \frac{(-\zeta)^s}{\sin(\pi s)} \cdot
 \frac{g(w; b_1,b_2, x, t)}{g(\tau^s w; b_1,b_2, x, t)} \cdot \frac{ ds}{\tau^s w-w'},
 \end{equation}
 the integration contour is oriented from bottom to top, and
$(-\zeta)^s$ is understood as $\exp\bigl(s\ln(-\zeta)\bigr)$ with principal branch of the
logarithm with cut along negative real semi-axis (corresponding to positive $\zeta$).
\end{lemma}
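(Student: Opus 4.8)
The plan is to recognize that the left-hand side of~\eqref{eq_series_into_fredholm} is a \emph{Mellin--Barnes-type} rearrangement of the moment generating series, and to convert the sum over partitions in~\eqref{eq_muk_2} into a Fredholm determinant via the standard ``$\exp(\mathrm{tr}\log)$'' bookkeeping. First I would substitute the expression~\eqref{eq_muk_2} for $\mu_k$ into $\sum_{k\ge 0}\mu_k\zeta^k/(\tau;\tau)_k$ and interchange the order of summation, summing over $k$ for fixed $\lambda$. Since a partition $\lambda=1^{m_1}2^{m_2}\cdots$ has $|\lambda|=\sum_i i m_i=k$ and $\ell(\lambda)=\sum_i m_i$, the factor $(\tau;\tau)_k$ cancels against the one appearing in~\eqref{eq_muk_2}, and the sum over all partitions factors as a sum over the multiplicities $m_1,m_2,\dots\ge 0$ independently. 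This produces
\begin{equation*}
 \sum_{k=0}^\infty \frac{\mu_k\zeta^k}{(\tau;\tau)_k}
 =\sum_{n=0}^\infty \frac{1}{n!}\frac{1}{(2\pi\ii)^n}\int_C\cdots\int_C
 \det\!\Bigl[\widehat K(w_i,w_j)\Bigr]_{i,j=1}^n\,\prod_{j=1}^n dw_j,
\end{equation*}
where, after unpacking the $\det\bigl[-1/(w_i\tau^{\lambda_i}-w_j)\bigr]$ and the products $f(w_j)\cdots f(\tau^{\lambda_j-1}w_j)$, each ``block'' of equal parts $\lambda_i=\ell$ with multiplicity $m_\ell$ contributes a factor $\zeta^{\ell m_\ell}/m_\ell!$ times a determinant of size $m_\ell$; the key combinatorial identity here is that a block-diagonal arrangement of the kernel coming from grouping parts by value reassembles, upon summing over all $\ell$, into a single kernel $\widehat K(w,w')=\sum_{\ell\ge 1}\zeta^\ell\,\dfrac{-1}{\tau^\ell w-w'}\,f(w)f(\tau w)\cdots f(\tau^{\ell-1}w)$. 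This is exactly the Cauchy-type determinant expansion used in~\cite[Section~3.2]{BigMac} and~\cite[Section~3]{BCS}, and I would cite those references for the purely algebraic manipulation rather than reproduce it.

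Second, I would rewrite $\widehat K$ using $f(z)=g(z)/g(\tau z)$, so that the telescoping product $f(w)f(\tau w)\cdots f(\tau^{\ell-1}w)=g(w)/g(\tau^\ell w)$, giving
\begin{equation*}
 \widehat K(w,w')=\frac{g(w)}{1}\sum_{\ell\ge 1}\frac{\zeta^\ell}{g(\tau^\ell w)}\cdot\frac{-1}{\tau^\ell w-w'} .
\end{equation*}
The final step is to recognize this discrete sum over $\ell\ge 1$ as the residue expansion of the Mellin--Barnes integral~\eqref{eq_kernel_def}: the function $\pi/\sin(\pi s)$ has simple poles at every integer $s=\ell$ with residue $(-1)^\ell$, so that $\dfrac{1}{2\ii}\int_{\delta+\ii\mathbb R}\dfrac{(-\zeta)^s}{\sin(\pi s)}\,h(s)\,ds$, when the contour is pushed to the right past $s=1,2,3,\dots$, picks up exactly $\sum_{\ell\ge 1}\zeta^\ell h(\ell)$ with $h(s)=\dfrac{g(w)}{g(\tau^s w)}\cdot\dfrac{1}{\tau^s w-w'}$, provided the arc at infinity contributes nothing. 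Matching signs: $(-\zeta)^\ell\cdot(-1)^\ell=\zeta^\ell$ and $\mathrm{Res}_{s=\ell}\,\pi/\sin(\pi s)=(-1)^\ell$, times the $\tfrac1{2\ii}\cdot\tfrac1\pi$ normalization, reproduces $\widehat K=K$ on the nose. For this contour-shift to be legitimate I need the decay of the integrand as $\Re(s)\to+\infty$ along the shifted contours, which is precisely what the uniform boundedness hypothesis on $\bigl|g(w)/g(\tau^s w)\cdot 1/(\tau^s w-w')\bigr|$ over $\Re(s)\ge\delta$ supplies (together with the exponential decay $|(-\zeta)^s/\sin(\pi s)|$ in $|\Im s|$ and the smallness of $|\zeta|$, which controls the growth in $\Re s$); this same bound also guarantees, via Hadamard's inequality and the corollary on convergence of~\eqref{eq_Fredholm_series}, that the Fredholm series converges absolutely and that all interchanges of sum and integral above are justified.

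The main obstacle is the careful justification of the contour shift and the vanishing of the contribution from the far-right arc, i.e.\ making rigorous the claim that $\tfrac1{2\ii}\int_{\delta+\ii\mathbb R}$ equals $\sum_{\ell\ge1}$ of the residues with no boundary term. One has to check that $g(\tau^s w)^{-1}=\bigl(1+\tau^{s}w\tau^{-1}\tfrac{1-b_1}{1-b_2}\bigr)^{-t}\bigl(1+\tau^s w\tau^{-1}\bigr)^{x}$ stays bounded (indeed tends to a constant) as $\Re s\to+\infty$ since $\tau^s w\to0$, so the only source of growth in $\Re s$ is $|(-\zeta)^s|=|\zeta|^{\Re s}e^{-\Im s\cdot\arg(-\zeta)}$, which decays when $|\zeta|<1$ — hence the ``sufficiently small $\zeta$'' hypothesis. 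I would organize this as: (i) bound the integrand on vertical segments $\Re s=\delta+M$, $M\in\mathbb Z_{\ge0}$, uniformly in $w,w'\in C$; (ii) deduce that $\int_{\delta+M+\ii\mathbb R}\to0$ as $M\to\infty$; (iii) apply the residue theorem on the strip $\delta\le\Re s\le\delta+M$ and let $M\to\infty$; (iv) invoke dominated convergence to pass the resulting kernel identity through the Fredholm series. Everything else is the bookkeeping cited from~\cite{BigMac,BCS}.
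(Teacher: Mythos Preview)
Your approach is essentially the same as the paper's: substitute \eqref{eq_muk_2} into the generating series, reorganize the sum over partitions into a Fredholm expansion with kernel $\widehat K(w,w')=\sum_{\ell\ge 1}\zeta^\ell\,\dfrac{g(w)}{g(\tau^\ell w)}\cdot\dfrac{-1}{\tau^\ell w-w'}$, and then recognize that sum as the residue expansion of the Mellin--Barnes integral \eqref{eq_kernel_def} by closing the $s$-contour to the right. The paper does exactly this (equations \eqref{eq_x3}--\eqref{eq_x6}), with the same justification of the contour shift via decay of $1/\sin(\pi s)$ and boundedness of $g(w)/g(\tau^s w)$.

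One correction to your write-up: the mechanism converting the partition sum to a Fredholm series is \emph{not} a block-diagonal factorization of $\det\bigl[-1/(w_i\tau^{\lambda_i}-w_j)\bigr]$ according to the values of $\lambda_i$ --- that determinant has no such block structure, since every entry is generically nonzero. The correct argument (which the paper spells out and which the references you cite contain) is a symmetrization: the integrand is invariant under simultaneous permutation of the pairs $(w_i,\lambda_i)$, so after integrating all $w_i$ over the same contour $C$, the sum over partitions $\lambda_1\ge\cdots\ge\lambda_N\ge 1$ with weight $1/(m_1!m_2!\cdots)$ equals $1/N!$ times the sum over unrestricted tuples $(\lambda_1,\ldots,\lambda_N)\in\{1,2,\ldots\}^N$. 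Multilinearity of the determinant in its rows then pulls each $\sum_{\lambda_i\ge 1}$ inside, producing the kernel $\widehat K$. Since you defer this step to \cite{BigMac,BCS} anyway, this does not affect the validity of your outline, but the ``block-diagonal'' phrasing should be dropped.
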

\begin{proof}
 We present here a sketch of the proof, a detailed exposition can be found in \cite[Section 3.2]{BigMac} and
\cite[Section 3]{BCS}. Plugging the definition of $\mu_k$ \eqref{eq_muk_2} into the sum $ \sum_{k\ge 0} \frac{\mu_k
\zeta^k}{(\tau;\tau)_k}$ we get
\begin{multline}
\label{eq_x3} \sum_{ \lambda=1^{m_1}2^{m_{2}}\cdots} \frac{\zeta^{|\lambda|}}{m_1!m_2!\cdots} \,
\frac{1}{(2\pi \ii)^{\ell(\lambda)}}
\\ \times \int_C \cdots \int_C \det\left[\frac{-1}{w_i
\tau^{\lambda_i}-w_j}\right]_{i,j=1}^{\ell(\lambda)} \prod_{j=1}^{\ell(\lambda)}  f(w_j)f(\tau
w_j)\cdots f(\tau^{\lambda_j-1}w_j) dw_j,
\end{multline}
with summation going over all partitions $\lambda$. Take $N=0,1,2,\dots$ and sum \eqref{eq_x3}
first over all $\lambda$ such that $\ell(\lambda)=N$. Now $\lambda$ is a sequence of integers
$\lambda_1\ge\lambda_2\ge\dots\ge\lambda_N\ge 1$. Let us remove the ordering assumption and
instead sum over $\lambda_1\ge 1$, \dots, $\lambda_N\ge 1$. This turns \eqref{eq_x3} into
\begin{multline}
 \sum_{N=0}^\infty \frac{1}{N!(2\pi\ii)^N} \sum_{\lambda_1=1}^\infty \cdots
\sum_{\lambda_N=1}^\infty  \zeta^{\lambda_1+\dots+\lambda_N}
\\ \times \int_C \cdots \int_C \det\left[\frac{-1}{w_i
\tau^{\lambda_i}-w_j}\right]_{i,j=1}^{N} \prod_{j=1}^{N}  f(w_j)f(\tau
w_j)\cdots f(\tau^{\lambda_j-1}w_j) dw_j.
\end{multline}
Using the definition of $g(z)$, we get
\begin{equation}
\label{eq_x4}
\sum_{N=0}^\infty \frac{1}{N!(2\pi\ii)^N} \sum_{\lambda_1=1}^\infty \cdots
\sum_{\lambda_N=1}^\infty  \zeta^{\lambda_1+\dots+\lambda_N}
 \int_C \cdots \int_C \det\left[\frac{-1}{w_i
\tau^{\lambda_i}-w_j}\right]_{i,j=1}^{N} \prod_{j=1}^{N}  \frac{g(w_j)}{g(\tau^{\lambda_j}w_j)} dw_j.
\end{equation}
Interchanging summation and integration and using the linearity of the determinant, we obtain
\begin{equation}
\label{eq_x5}
\sum_{N=0}^\infty \frac{1}{N!(2\pi\ii)^N}
 \int_C \cdots \int_C \det\left[\sum_{\lambda_i=1}^{\infty}\frac{-\zeta^{\lambda_i}}{w_i
\tau^{\lambda_i}-w_j} \frac{g(w_i)}{g(\tau^{\lambda_i}w_i)} \right]_{i,j=1}^{N} \prod_{j=1}^{N} dw_j.
\end{equation}
To finish the proof it remains to show that
\begin{equation}
\label{eq_x6}
 \sum_{a=1}^{\infty}\frac{-\zeta^a}{w_i
\tau^{a}-w_j} \frac{g(w_i)}{g(\tau^{a}w_i)}=K(w_i,w_j).
\end{equation}
For that we first note that the integrand in the definition \eqref{eq_kernel_def} of $K$ decays
exponentially fast as $|s|\to\infty$ along the vertical line $\Re(s)=\delta$ (due to the decay of
$1/\sin(\pi s)$.) Further, because of the same decay, the integral in the definition of $K$ can be
obtained as $k\to\infty$ limit of the same integral with contour $\Re(s)=\delta$ replaced by the
closed half-circle $C(k)$, consisting of the vertical line joining $\delta-k\ii$ with
$\delta+k\ii$ and right half of the circle of radius $k$ with center at $\delta$. The integral
over $C(k)$ can be computed as a sum of the residues in points $m=1,\dots,k$ using
$${\rm
Res}_{s=m} \frac{\pi}{\sin(\pi s)}=(-1)^{m}, \quad m=1,2,\dots$$ (note that an additional minus
sign arises because of the orientation of the vertical line $\delta+\ii\mathbb R$ in the theorem).
Sending $k\to\infty$ we arrive at \eqref{eq_x6}, see \cite[Proof of Theorem 3.2.11]{BigMac} for
more technical details.
\end{proof}

\begin{proof}[Proof of Theorem \ref{Theorem_observable_fredholm}]
We start by noting that \eqref{eq_observable_step} has the form of Lemma \ref{Lemma_integral_into_sum} with
$$
 f(z)=\left(\frac{1 +z\tau^{-1} \frac{1-b_1}{1-b_2}}{1
+z\frac{1-b_1}{1-b_2}} \right)^t \left(\frac{1+z}{1+z\tau^{-1}}\right)^x
$$
and $\alpha=-\tau$. Thus, setting
$$
 g(z; b_1,b_2, x, t)=\left( 1 + z \tau^{-1} \frac{1-b_1}{1-b_2} \right)^t \left(\frac{1}{1+z\tau^{-1}}\right)^x
$$
so that
$$
 f(z)=g(z)/g(\tau z)
$$
we can use Lemma \ref{Lemma_integral_into_sum} and then Lemma \ref{Lemma_sum_into_fredholm}.
Noting that by the $q$--binomial theorem \eqref{eq_q_binomial_3}
$$
 \sum_{k=0}^{\infty} \frac{\tau^{k N_x} \zeta^k}{(\tau;\tau)_k}=\frac{1}{(\zeta \tau^{k N_x(t)};\tau)_\infty},
$$
and that due to the bound $0<\tau^{N_x}<1$ we can interchange the order of summation and taking
the expectation,  we arrive at the statement of Theorem \ref{Theorem_observable_fredholm} for
small values of $\zeta$. By an analytic continuation argument this readily implies the statement
for all $\zeta\in \mathbb C\setminus \mathbb R_+$, cf.\ \cite[Proof of Theorem 3.2.11]{BigMac} for
a similar argument.
\end{proof}

\section{Asymptotics}

The main result of this section is summarized in the following theorem. We now switch back to
using the Markov chain $\x^{b_1,b_2}(t)$ of Section \ref{Section_P_as_interacting} in our
notations. Recall that due to Proposition \ref{Proposition_T_infty}, the fixed $t$ distribution of
$\x^{b_1,b_2}(t)$ is the result of the application of the $t$--th power of the transfer matrix
$\T^{(\infty)}$ to the step initial condition $(1,2,3,\dots)$.

\begin{theorem}\label{Theorem_asymptotics}
Let $0<b_2<b_1<1$ and set $\tau=\frac{b_2}{b_1}$, $\kappa:= \frac{1-b_1}{1-b_2}$. Let $N_x(t)$
denote the number of particles (non-strictly) to the left of the point $x$ in $\x^{b_1,b_2}(t)$ of
Section \ref{Section_P_as_interacting}. For any $\nu \in (\kappa,\kappa^{-1})$ and $s\in \R$ we
have
$$
\lim_{L\to \infty} \PP\left(\frac{m_\nu L -N_{\nu L}(L)}{\sigma_{\nu} L^{1/3}}\le h\right) =
F_{{\rm GUE}}(h),
$$
where
$$
m_{\nu} := \frac {\left(\sqrt{\nu}-\sqrt{\kappa}\right)^2}{{1-\kappa}},\qquad \sigma_{\nu} :=
\frac{\kappa^{1/2}\nu^{-1/6}}{1-\kappa} \left(\big(1-\sqrt{\nu\kappa}\big)\big(\sqrt{\nu/
\kappa}-1\big)\right)^{2/3},
$$

and $F_{{\rm GUE}}$ is the GUE Tracy-Widom distribution.
\end{theorem}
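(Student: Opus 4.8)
The plan is to read the one-point law of $N_{\nu L}(L)$ off the Fredholm determinant identity of Theorem~\ref{Theorem_observable_fredholm} and then run a steepest descent analysis of the kernel \eqref{eq_kernel}, following closely the scheme developed for $q$--TASEP and ASEP in \cite{BigMac}, \cite{BCS}. First I would fix $\nu\in(\kappa,\kappa^{-1})$, $h\in\R$, set $x=\lfloor\nu L\rfloor$, $t=L$, and specialize the parameter to $\zeta=\zeta_L:=-\tau^{-m_\nu L+\sigma_\nu L^{1/3}h}$ (a negative real, hence admissible in Theorem~\ref{Theorem_observable_fredholm}). Since $0<\tau<1$ and $\zeta_L<0$, the map $n\mapsto \bigl((\zeta_L\tau^{\,n};\tau)_\infty\bigr)^{-1}$ is increasing in $n$, takes values in $(0,1]$, equals $1-o(1)$ for $n\ge m_\nu L-\sigma_\nu L^{1/3}h+C$ and $o(1)$ for $n\le m_\nu L-\sigma_\nu L^{1/3}h-C$, with transition window of width $O(1)=o(L^{1/3})$. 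The standard $q$--Laplace inversion argument (cf.\ \cite{BigMac}, \cite{BCS}), together with continuity of $F_{{\rm GUE}}$, then reduces the theorem to showing that the right-hand side of \eqref{eq_observable_Fredholm}, evaluated at $\zeta=\zeta_L$, converges as $L\to\infty$ to $F_{{\rm GUE}}(h)=\det(I-K_{{\rm Ai}})_{L_2(h,+\infty)}$, the Fredholm determinant of the Airy kernel (tightness of $\tfrac{m_\nu L-N_{\nu L}(L)}{\sigma_\nu L^{1/3}}$ being a by-product of the uniform bounds obtained below).

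Next I would put the kernel in a large--$L$ form. In \eqref{eq_kernel}, write $(-\zeta_L)^s=\exp\bigl(s(-m_\nu L+\sigma_\nu L^{1/3}h)\log\tau\bigr)$ and substitute $z=\tau^{s}w$. Using $t=L$, $x=\nu L$ and $\tfrac{1-b_1}{1-b_2}=\kappa$, the factor $(-\zeta_L)^s\,g(w)/g(\tau^{s}w)$ becomes $\exp\bigl(L(\Phi(w)-\Phi(z))+\sigma_\nu L^{1/3}h(\log z-\log w)\bigr)$, where
$$
\Phi(z)=m_\nu\log z+\log\!\Bigl(1+\tfrac{z\kappa}{\tau}\Bigr)-\nu\log\!\Bigl(1+\tfrac{z}{\tau}\Bigr),
$$
a function with logarithmic singularities at $z=0,-\tau,-\tau/\kappa$. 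A direct computation should show that for $\nu$ in the liquid range $(\kappa,\kappa^{-1})$ there is a unique negative real $z_c$ with $\Phi'(z_c)=\Phi''(z_c)=0$ \emph{precisely} when $m_\nu=\frac{(\sqrt\nu-\sqrt\kappa)^2}{1-\kappa}$; this is the origin of the centering constant, and the value $\Phi'''(z_c)$ then yields exactly the fluctuation scale $\sigma_\nu=\frac{\kappa^{1/2}\nu^{-1/6}}{1-\kappa}\bigl((1-\sqrt{\nu\kappa})(\sqrt{\nu/\kappa}-1)\bigr)^{2/3}$ (consistently with the limit-shape boundary values $m_\kappa=0$ and $m_{\kappa^{-1}}=\kappa^{-1}-1$). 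At $\nu=\kappa$ and $\nu=\kappa^{-1}$ the critical point degenerates, which is where the liquid-region hypothesis enters.

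Then comes the steepest descent. I would deform the contour $C_r$ carrying $w,w'$ (a circle around $0$ of radius in $(\tau,\tau/\kappa)$) and the contour carrying $z=\tau^s w$ (image of the line $\Re s=\delta$) to pass through $z_c$ along the local steepest directions, arranging that $\Re\Phi$ attains a strict maximum at $z_c$ along $C_r$ and a strict minimum at $z_c$ along the $z$--contour, so that $\Re(\Phi(w)-\Phi(\tau^s w))\le0$ with equality only at the critical point and quadratic-or-better decay away from it; along the way one must check that the deformation crosses no poles of $g(\tau^s w)^{-1}$ (at $z=-\tau,-\tau/\kappa$), no poles of $1/\sin(\pi s)$ (at $s\in\Z$), and accounts correctly for the pole at $z=w'$ (which supplies the principal-value term keeping the deformed kernel well defined). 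Rescaling $w=z_c\bigl(1+\tilde w/(\sigma_\nu L^{1/3})\bigr)$, $z=z_c\bigl(1+\tilde z/(\sigma_\nu L^{1/3})\bigr)$, the Taylor expansion gives $L(\Phi(w)-\Phi(z))\to\tfrac13\tilde w^3-\tfrac13\tilde z^3$, the $\sigma_\nu L^{1/3}h$ term contributes the shift $h(\tilde z-\tilde w)$, $\tfrac1{\tau^s w-w'}$ becomes $\tfrac1{\tilde z-\tilde w}$, and $\tfrac{\pi}{\sin\pi s}\,ds$ reconstructs the Airy function through $\mathrm{Ai}(a)=\tfrac1{2\pi\ii}\int e^{u^3/3-au}\,du$; the upshot is that $L^{1/3}K^{b_1,b_2}_{\zeta_L}$ converges, after this rescaling, to the Airy kernel shifted by $h$. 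To upgrade pointwise kernel convergence to convergence of Fredholm determinants I would produce an $L$--uniform, integrable (Gaussian-type in the tails) bound on the kernel along the contours, coming from the strict sign of $\Re\Phi$ off $z_c$ and the exponential decay of $1/\sin(\pi s)$; Hadamard's inequality then dominates the Fredholm series term by term, and dominated convergence gives $\det(I+K^{b_1,b_2}_{\zeta_L})_{C_r}\to\det(I-K_{{\rm Ai}})_{L_2(h,+\infty)}=F_{{\rm GUE}}(h)$.

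\textbf{Main obstacle.} The identification of $z_c$, $m_\nu$, $\sigma_\nu$ and the local cubic expansion are routine. The hard part is the \emph{global} contour engineering: one must simultaneously steer the circular $w$--contour, the $z$--contour and the $s$--contour into their steepest-descent positions through $z_c$ without hitting any pole of the integrand and without destroying convergence of the $s$--integral, and then prove the $L$--uniform tail estimates that justify interchanging $L\to\infty$ with the Fredholm expansion; this requires a careful description of the level sets of $\Re\Phi$ in the liquid regime. Finally, once Theorem~\ref{Theorem_asymptotics} is established, Theorems~\ref{Theorem_LLN} and \ref{Theorem_fluctuations} follow by translating between $N_x(t)$ and the height function via the particle--path correspondence of Section~\ref{Section_P_as_interacting} and an elementary change of variables.
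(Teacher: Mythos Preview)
Your proposal is correct and mirrors the paper's proof: the same $q$-Laplace inversion (the paper packages it as Lemma~\ref{Lemma_observable_to_probability}), the same phase function $\Phi=G$ with its double critical point $z_c=\varrho$, the same local cubic expansion producing $m_\nu$ and $\sigma_\nu$, the same steepest-descent contour deformation and Hadamard-bound justification of the Fredholm limit. Two small technical points the paper handles explicitly that you gloss over: the substitution $z=\tau^s w$ is many-to-one because $s\mapsto\tau^s$ is periodic along vertical lines, so the paper splits the $s$-line into period cells $\mathcal L_k$, $k\in\mathbb Z$, and shows only the $k=0$ cell survives in the limit (it is precisely there that $\pi/\sin(\pi s)$ linearizes to produce the extra $1/(\tilde v-\tilde w)$ factor, not a direct Airy integral); and the pole at $z=w'$ is avoided by an $O(L^{-1/3})$ shift of the $v$-contour away from $\varrho$, not by a principal value.
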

\noindent{\bf Remark 1.}
 There are some heuristic ways to understand the condition that $\nu\in
(\kappa,\kappa^{-1})$. If rather than starting with step initial data, one considers initial data
with a single particle started at the origin, then a quick calculation reveals that the law of
large numbers for the location of this particle after long time $t$ has velocity $\kappa$. If
instead of looking at particles, we consider holes (i.e.\ spots with no particles) and start with
 a single hole at the origin, then a similar calculation reveals that the law of large numbers for
the location of this hole after long time $t$ has velocity $\kappa^{-1}$.  Of course, this
reasoning neglects the effects of the other particles/holes but remarkably gives the correct
interval for analyzing fluctuation behavior.

There is a general KPZ--scaling theory (cf.\ \cite{Spohn_KPZ}), which should predict the interval
$(\kappa,\kappa^{-1})$, the centering $m_{\nu}$ and the scaling $\sigma_{\nu}$. We do not check
whether Theorem \ref{Theorem_asymptotics} conforms with such predictions. The flux function $j(y)$
one would need to make this check was computed in \cite{GwaSpohn} as equation (6).

\smallskip

\noindent{\bf Remark 2.}
 The asymptotics we now perform can be adapted to degenerations of the
process described in Section \ref{Section_P_as_interacting}.

\medskip

The proof of Theorem \ref{Theorem_asymptotics} is a steepest descent analysis of the integrals of
Theorem \ref{Theorem_observable_fredholm}. Proofs of similar style were performed previously in
\cite{BigMac}, \cite{BCF}, \cite{BCR},  \cite{F_qT}.

\smallskip


The following simple lemma shows how the observable considered in Theorem
\ref{Theorem_observable_fredholm} can be used to study the convergence of probability
distributions.

\begin{lemma}\label{Lemma_observable_to_probability}
Consider a sequence of functions $\{f_n\}_{n\geq 1}$ mapping $\R\to [0,1]$ such that for each $n$,
$f_n(x)$ is strictly decreasing in $x$ with a limit of $1$ at $x=-\infty$ and $0$ at $x=\infty$,
and for each $\delta>0$, on $\R\setminus[-\delta,\delta]$ $f_n$ converges uniformly to ${\bf
1}(x\leq 0)$. Define the $r$-shift of $f_n$ as $f^r_n(x) = f_n(x-r)$. Consider a sequence of
random variables $X_n$ such that for each $r\in \R$,
\begin{equation*}
\EE[f^r_n(X_n)] \to p(r)
\end{equation*}
and assume that $p(r)$ is a continuous probability distribution function. Then $X_n$ converges
weakly in distribution to a random variable $X$ which is distributed according to $\PP(X\leq r) =
p(r)$.
\end{lemma}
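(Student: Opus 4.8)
The plan is to upgrade the conclusion slightly: I would show that $\PP(X_n\le r)\to p(r)$ for \emph{every} $r\in\R$ (not merely at continuity points), and since $p$ is a continuous probability distribution function this is precisely weak convergence of $X_n$ to a random variable $X$ with $\PP(X\le r)=p(r)$. The whole argument is a two-sided sandwich of the indicator $\mathbf 1(x\le r)$ between the shifted test functions $f^{r+\delta}_n$ and $f^{r-\delta}_n$, using that on $\{x\le r\}$ the function $f^{r+\delta}_n$ is uniformly close to $1$ and on $\{x> r\}$ the function $f^{r-\delta}_n$ is uniformly close to $0$, followed by letting $\delta\to 0$.

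First I would fix $r\in\R$ and $\delta>0$ and record the two pointwise inequalities. From the hypothesis that $f_n\to\mathbf 1(x\le 0)$ uniformly on $(-\infty,-\delta]$ there is a sequence $\eta_n\to 0$ (depending on $\delta$) with $f_n(y)\ge 1-\eta_n$ for all $y\le-\delta$; since for $x\le r$ we have $x-(r+\delta)\le-\delta$, and since $f_n\ge 0$ everywhere, this gives $\mathbf 1(x\le r)\le f^{r+\delta}_n(x)+\eta_n$ for all $x\in\R$. Symmetrically, uniform convergence on $[\delta,\infty)$ yields $\eta'_n\to 0$ with $f_n(y)\le\eta'_n$ for $y\ge\delta$; since for $x>r$ we have $x-(r-\delta)>\delta$, and since $f_n\le 1$ everywhere, this gives $\mathbf 1(x\le r)\ge f^{r-\delta}_n(x)-\eta'_n$ for all $x\in\R$.

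Evaluating these inequalities at $X_n$ and taking expectations, then using the hypothesis $\EE[f^{r\pm\delta}_n(X_n)]\to p(r\pm\delta)$, I obtain
$$
p(r-\delta)\le\liminf_{n\to\infty}\PP(X_n\le r)\le\limsup_{n\to\infty}\PP(X_n\le r)\le p(r+\delta)
$$
for every $\delta>0$. Letting $\delta\to 0^+$ and invoking the continuity of $p$ at $r$ forces $\lim_{n\to\infty}\PP(X_n\le r)=p(r)$, which is the desired conclusion.

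There is no genuine obstacle here; the only delicate point is that the hypotheses say nothing about $f_n$ on the shrinking window $[-\delta,\delta]$ around the jump of the limiting step function, which is exactly why one must work with the $\delta$-perturbed shifts and only remove $\delta$ at the very end using the assumed continuity of $p$. Note also that the strict monotonicity of each $f_n$ and its pointwise limits at $\pm\infty$ play no role beyond what the uniform-convergence assumption already supplies — only the bounds $0\le f_n\le 1$ and the uniform control on $\R\setminus[-\delta,\delta]$ are used.
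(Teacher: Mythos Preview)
Your argument is correct. The paper itself does not give an independent proof of this lemma; it simply cites \cite[Lemma 4.1.39]{BigMac}. Your sandwich argument --- bounding $\mathbf 1(x\le r)$ between $f_n^{r-\delta}(x)-\eta'_n$ and $f_n^{r+\delta}(x)+\eta_n$, taking expectations, passing to $\liminf/\limsup$, and then letting $\delta\to 0$ using continuity of $p$ --- is exactly the standard proof one finds in that reference, so your approach matches the intended one.
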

\begin{proof} See  \cite[Lemma 4.1.39]{BigMac} \end{proof}

The main part of the proof of Theorem \ref{Theorem_asymptotics} is  the following proposition.

\begin{proposition} \label{Proposition_analytical_converence}
In the notations of Theorem \ref{Theorem_asymptotics}, with $K_\zeta$ of Theorem
\ref{Theorem_observable_fredholm}, and $x=\lfloor \nu L \rfloor$  we have
$$
 \lim_{L\to\infty} \det\left(1+K^{b_1,b_2}_{\zeta(L)}\right)_{C_r}=F_{GUE}(h),\quad \text{ where } \zeta(L)= -\tau^{-m_\nu L +h\sigma_\nu
 L^{1/3}}.
$$
\end{proposition}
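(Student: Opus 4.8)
The statement is a steepest--descent asymptotic analysis of the Fredholm determinant of Theorem \ref{Theorem_observable_fredholm}, in the scheme of \cite{BigMac}, \cite{BCS}, \cite{BCF}, \cite{BCR}, \cite{F_qT}. First I would fix the scaling $t=L$, $x=\lfloor\nu L\rfloor$, $\zeta=\zeta(L)=-\tau^{-m_\nu L+h\sigma_\nu L^{1/3}}$ and insert it into the kernel $K^{b_1,b_2}_\zeta$ of \eqref{eq_kernel}. Since $(-\zeta(L))^{s}=\tau^{s(-m_\nu L+h\sigma_\nu L^{1/3})}=(\tau^s w/w)^{-m_\nu L+h\sigma_\nu L^{1/3}}$, all the $L$--dependence of the Mellin--Barnes integrand collects, up to bounded factors, into $e^{L(F(\tau^s w)-F(w))}\,(\tau^s w/w)^{h\sigma_\nu L^{1/3}}$, where
\[
 F(z)=-m_\nu\ln z-\ln\!\Big(1+\tfrac{1-b_1}{1-b_2}\,\tfrac z\tau\Big)+\nu\ln\!\Big(1+\tfrac z\tau\Big).
\]
So the asymptotics are governed by the function $F$, together with the $L^{1/3}$--weighted correction $\ln z$, and the task is to perform a saddle point analysis of the resulting double (Mellin--Barnes $\times$ Fredholm) integral.

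\textbf{Critical point and contours (the crux).} An elementary but somewhat lengthy computation shows that for $\nu\in(\kappa,\kappa^{-1})$ and $m_\nu=\frac{(\sqrt\nu-\sqrt\kappa)^2}{1-\kappa}$ as in Theorem \ref{Theorem_asymptotics}, the function $F$ has a real \emph{double} critical point $z_c=z_c(\nu)$, i.e.\ $F'(z_c)=F''(z_c)=0$ and $F'''(z_c)\neq0$, lying strictly in the pole--free annulus separating the singular points $z=-\tau$ and $z=-\frac{1-b_2}{1-b_1}\tau$ of the factors in $g$ (this is exactly where the hypothesis $\nu\in(\kappa,\kappa^{-1})$ and the admissible interval $\tau<r<\frac{1-b_2}{1-b_1}\tau$ for the contour radius are needed), and that $\sigma_\nu$ is, up to an explicit factor depending on $z_c$ and $F'''(z_c)$, the constant calibrating the cubic rescaling below. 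Next I would deform the contours: the outer Fredholm contour $C_r$, along which the variables of the determinant in \eqref{eq_Fredholm_series} run, is pushed to a contour $\mathcal C_w$ through $z_c$ on which $\Re F$ attains a strict global maximum at $z_c$, leaving $z_c$ along the rays $e^{\pm 2\pi\ii/3}$; the Mellin--Barnes line $\Re s=\tfrac12$ is pushed so that, for $w$ near $z_c$, the point $\tau^s w$ traces a contour $\mathcal C_z$ through $z_c$ on which $\Re F$ attains a strict global minimum at $z_c$, leaving $z_c$ along the rays $e^{\pm\pi\ii/3}$. One must verify that these deformations cross no poles of the integrand --- the only poles come from $1/\sin(\pi\log_\tau(\tau^s w/w))$ (at $\tau^s w=\tau^n w$), from the factor $1/(\tau^s w-w')$, and from the zeros of $1+\tfrac z\tau$ and $1+\tfrac{1-b_1}{1-b_2}\tfrac z\tau$ inside $g$ --- while correctly accounting for the residue at $s=0$, which is precisely what produces the $1/(z-w)$--type denominator of the limiting kernel. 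One also needs the global estimate that outside a fixed neighbourhood of $z_c$ one has $\Re(F(\tau^s w)-F(w))\le-\delta<0$ uniformly along $\mathcal C_z\times\mathcal C_w$, so all tails decay like $e^{-\delta L}$.

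\textbf{Local rescaling, limit kernel, and passage to the limit.} On the critical scale set $w=z_c(1+c_\nu L^{-1/3}\tilde w)$, $w'=z_c(1+c_\nu L^{-1/3}\tilde w')$, $\tau^s w=z_c(1+c_\nu L^{-1/3}\tilde z)$, with the explicit constant $c_\nu$ chosen so that, Taylor expanding $F$ to third order and $\ln(\cdot)$ to first order,
\[
 L\big(F(\tau^s w)-F(w)\big)+h\sigma_\nu L^{1/3}\ln(\tau^s w/w)\ \longrightarrow\ \Big(\tfrac{\tilde z^{3}}{3}-h\tilde z\Big)-\Big(\tfrac{\tilde w^{3}}{3}-h\tilde w\Big),
\]
while the remaining factors satisfy $\frac1{2\ii\sin(\pi s)}\cdot\frac{dz}{z\ln\tau}\to\frac1{2\pi\ii}\,\frac{d\tilde z}{\tilde z-\tilde w}$ (with $z=\tau^s w$) and $\frac1{\tau^s w-w'}\to\frac1{z_c c_\nu L^{-1/3}}\cdot\frac1{\tilde z-\tilde w'}$. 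Matching the $L^{-1/3}$ blow--up of $K$ near $z_c$ with the Jacobian of the rescaling $w\mapsto\tilde w$ then yields pointwise convergence of the rescaled kernel to
\[
 \hat K_h(\tilde w,\tilde w')=\frac1{2\pi\ii}\int_{\widehat{\mathcal C}_z}
 \frac{e^{\tilde z^{3}/3-h\tilde z}}{e^{\tilde w^{3}/3-h\tilde w}}\,\frac{d\tilde z}{(\tilde z-\tilde w)(\tilde z-\tilde w')},
\]
a standard contour--integral form of the GUE Airy kernel shifted by $h$, for which $\det(1+\hat K_h)_{\widehat{\mathcal C}_w}=F_{GUE}(h)$; this last identification is carried out in \cite[proof of Theorem 3.2.11]{BigMac} and \cite{BCS}. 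Finally, combining the local convergence with the global $e^{-\delta L}$ bound and Hadamard's inequality to dominate the $n$-th term of the series \eqref{eq_Fredholm_series} uniformly in $L$, one passes to the limit term by term to obtain $\det(1+K^{b_1,b_2}_{\zeta(L)})_{C_r}\to\det(1+\hat K_h)_{\widehat{\mathcal C}_w}=F_{GUE}(h)$.

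\textbf{Main obstacle.} The hard step is the second one: constructing, explicitly and globally, the steepest descent/ascent contours through the \emph{double} saddle $z_c$ that stay in the pole--free region, correctly tracking the $s=0$ residue, and establishing the uniform sign of $\Re(F(\tau^s w)-F(w))$ off $z_c$. Once these geometric facts are in place, the cubic Taylor expansion and the dominated--convergence argument for the Fredholm series are routine.
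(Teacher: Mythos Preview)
Your approach is essentially the paper's: a double critical point for the exponent, steep descent/ascent contours through it, cubic local expansion, identification of the limiting kernel with the Airy kernel, and Hadamard-dominated convergence of the Fredholm series. Two corrections are needed. First, the roles of the two contours are flipped: with your $F=-G$ (in the paper's notation $G(z)=\ln(1+z\kappa\tau^{-1})-\nu\ln(1+z\tau^{-1})+m_\nu\ln z$) one has $G'''(\varrho)>0$, so the $w$-contour must leave the critical point $\varrho$ at angles $\pm\pi/3$ (where $\Re G$ has a \emph{maximum}, equivalently $\Re F$ a minimum), and the $v=\tau^s w$ contour at $\pm2\pi/3$ (where $\Re G$ has a minimum); as written, your exponent $\Re(F(\tau^s w)-F(w))$ would be $\ge0$ off the saddle and the integrals diverge, and correspondingly your limiting exponent $\tilde z^3/3-\tilde w^3/3+h(\tilde w-\tilde z)$ has the wrong sign. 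Second, the factor $1/(\tilde z-\tilde w)$ does not arise from a ``residue at $s=0$''; the point $s=0$ is never crossed. Rather, the map $s\mapsto \tau^s w$ is periodic, so after the change of variables the $s$-integral becomes a sum over $k\in\mathbb Z$ of integrals over a single circle, with $\sin(\pi s)$ replaced by $\sin\!\big(\tfrac{\pi}{\ln\tau}(\ln(v/w)+2\pi\ii k)\big)$; only the $k=0$ term survives the $L\to\infty$ limit, and there the linearization $\sin(\cdot)\approx\tfrac{\pi}{\ln\tau}\cdot\tfrac{v-w}{w}$ produces the $1/(\tilde v-\tilde w)$ denominator.
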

 This is proved in Sections \ref{section_proof_1} and \ref{section_proof_2}.

\begin{proof}[Proof of Theorem \ref{Theorem_asymptotics}]
We use Lemma \ref{Lemma_observable_to_probability} with functions
$$
 f_L(z)=\frac{1}{\left(-\tau^{-L^{1/3} z};\tau\right)_\infty},\quad L=1,2,\dots.
$$
 Clearly, $f_L(z)$ is is a monotonously decreasing function of real
argument $z$, $\lim_{z\to+\infty}f_L(z)=0$ and $\lim_{z\to-\infty} f_L(z)=1$. Further,
$f_L(z)=f_1(L^{1/3} z)$ which implies that for any $z<0$ we have $\lim_{L\to\infty} f_L(z)=1$ and
for any $z>0$ we have $\lim_{L\to\infty} f_n(z)=0$. We conclude that $f_L(z)$ satisfy the
assumptions of Lemma \ref{Lemma_observable_to_probability}.

We choose $X_L$ of Lemma \ref{Lemma_observable_to_probability} to be
$$
 X_L=\frac{m_\nu L -N_{\nu L}(L)}{L^{1/3} \sigma_\nu}.
$$
Now Lemma \ref{Lemma_observable_to_probability}, Proposition \ref{Proposition_T_infty} and Theorem
\ref{Theorem_observable_fredholm} reduce Theorem \ref{Theorem_asymptotics} to Proposition
\ref{Proposition_analytical_converence}.
\end{proof}

\subsection{Proof of Proposition \ref{Proposition_analytical_converence}}

\label{section_proof_1}

The proof of Proposition \ref{Proposition_analytical_converence} consists of two parts. One is a
formal steepest-descent computation of the leading asymptotic term. Second part gives estimates
proving that all other terms do not contribute. We start with the first part.

\bigskip

We have
\begin{equation}
\label{eq_kernel_plugged}
 K^{b_1,b_2}_{\zeta(L)} (w,w')=\frac{1}{2\ii} \int_{1/2+\ii\mathbb R} \frac{\tau^{s\left(-m_\nu L +h\sigma_\nu
 L^{1/3}\right)}}{\sin(\pi s)}
 \cdot \frac{g(w; b_1,b_2, \nu L , L)}{g(\tau^s w; b_1,b_2, \nu L , L)} \cdot \frac{ ds}{\tau^s w-w'},
\end{equation}
where the integration contour is oriented from bottom to top and
$$
 g(z; b_1,b_2, x, t)=\left( 1 + z \tau^{-1} \frac{1-b_1}{1-b_2} \right)^t
 \left(\frac{1}{1+z\tau^{-1}}\right)^x.
$$
Recall that the powers $\tau^u$ here should be understood as $\exp(u \ln(\tau) )$ and
set\footnote{In order to simplify the exposition we omit the integer part and set $x=\nu L$
instead of $x=\lfloor \nu L\rfloor$ in the following argument. The arising additional factor
$(1+z\tau^{-1})^{\nu L-\lfloor \nu L\rfloor}$ plays no role for the asymptotics.} $t=L$, $x=\nu L$
, we write and write \eqref{eq_kernel_plugged} as
\begin{equation}
\label{eq_kernel_plugged_2}
 K^{b_1,b_2}_{\zeta(L)} (w,w')=\frac{1}{2\ii} \int_{1/2+\ii\mathbb R} \frac{\exp\left[L(G(w)-G(\tau^s
 w)) + L^{1/3}h \sigma_\nu(\ln(\tau^s w)-\ln(w))\right]
 \cdot  ds}{(\tau^s w-w')\sin(\pi s)},
\end{equation}
where we choose the branch of the logarithm with the cut along negative real axis here and
$$
G(z) = \ln\big(1+ z \kappa \tau^{-1}\big) - \nu \ln\big(1+ z \tau^{-1}\big) + m_{\nu} \ln(z).
$$

The form of the integrand in \eqref{eq_kernel_plugged_2} suggests a change of variables $v=\tau^s
w$. However, we should be careful here as the map $s\mapsto \tau^s$ is periodic. More precisely,
as $s$ varies over the vertical line $1/2+\ii\mathbb R$, $\tau^s w$ wraps around a circle. In
order to make sense of that feature we subdivide the integration contour $1/2+\ii\mathbb R$ into
finite contours $\mathcal L_k$, $k\in \mathbb Z$:
$$
 \mathcal L_k=\{s\mid \Re(s)=1/2,\quad \pi \ln(\tau^{-1})(-1+2k) \le Im(s)< \pi \ln(\tau^{-1})(1+2k)
 \}.
$$
On each $\mathcal L_k$, the map $s\mapsto \tau^s w$ is a bijection (onto a circle) and we can
introduce the variable $v=\tau^s w$. We get
\begin{equation}
\label{eq_kernel_plugged_3}
 K^{b_1,b_2}_{\zeta(L)} (w,w')=\frac{1}{2\ii} \sum_{k=-\infty}^{\infty} \int_{C_{\sqrt{\tau}|w|}}
 \frac{\exp\left[L(G(w)-G(v)) + L^{1/3}h \sigma_\nu(\ln(v)-\ln(w))\right]
 \cdot  dv}{(v-w')\sin\left(\frac{\pi}{\ln(\tau)} (\ln(v/w)+2\pi\ii k)\right) \ln(\tau) v},
\end{equation}
where the integration contour $C_{\sqrt{\tau}|w|}$ is a (clockwise-oriented) circle of radius
$\sqrt{\tau}|w|$, which is $\sqrt{\tau}r$ taking into the account the definition of the
$w$--contour. One might be uneasy about various branches of the logarithms that we choose in
\eqref{eq_kernel_plugged_3}, but in the end in the relevant domains of integration all the
arguments of logarithms will be close to being real positive and principal branch will work. Note
that $\sin\left(\frac{\pi}{\ln(\tau)} (\ln(v/w)+2\pi\ii k)\right)$ grows exponentially in $|k|$,
as $k\to\infty$, therefore the series in \eqref{eq_kernel_plugged_3} converges exponentially fast.
Moreover, this property also justifies the termwise $L\to\infty$ limit in
\eqref{eq_kernel_plugged_3} which we will perform.

The crucial property which we will further use is that the integrand in
\eqref{eq_kernel_plugged_3} has a pole at $v=w$ when $k=0$ but not for other $k$.

Note that at this stage we have two contours in play: the contour $C_r$, where $w$--variables live
($\tau<r<\tau/\kappa$) and contour $C_{\sqrt{\tau}|w|}$, where $v$ lives. Since the Fredholm
determinant we deal with is defined as a sum of  complex integrals of meromorphic functions, and
as long as we avoid the singularities we can deform both $w$- and $v$-contours without changing
the value of integrals and, thus, of the determinant.

Our next aim is to deform both contours to new ones, where the $L\to\infty$ asymptotics can be
performed.

For that we need to understand how the real part of $G(z)$ behaves as $z$ varies over $\mathbb C$.
We start by taking derivative to find critical points:
$$
 G'(z)= \frac{\kappa \tau^{-1}}{1+ z \kappa \tau^{-1}} - \frac{\nu \tau^{-1}}{1+ z \tau^{-1}} +
 \frac{m_{\nu}}{z}.
$$
Plugging in the value of $m_\nu$
$$
m_{\nu}= \frac{1-b_1}{b_1-b_2}
\left(1-\sqrt{\nu/\kappa}\right)^2=\frac{\left(1-\sqrt{\nu/\kappa}\right)^2}{\kappa^{-1}-1},
$$
we get
$$
 G'(z)=
 \frac{\kappa(1-\sqrt{\nu\kappa})^2}{ \tau^2(1-\kappa)}  \frac{(z-\varrho)^2}{z(1+ z \kappa \tau^{-1})(1+ z \tau^{-1})},
$$
where
$$
\varrho = -\tau \frac{1-\sqrt{\nu/\kappa}}{1-\sqrt{\nu\kappa}}.
$$
It follows that $G(z)$ has a unique (double) critical point at $\varrho$. Further,
$$\frac{G'''(\varrho)}{2}=\frac{\kappa(1-\sqrt{\nu\kappa})^2}{ \tau^2(1-\kappa)}\cdot
 \frac{1}{\varrho(1+ \varrho \kappa \tau^{-1})(1+ \varrho \tau^{-1})}
=-\frac{\kappa^{3/2}(1-\sqrt{\nu\kappa})^5}{ \tau^3(1-\kappa)^3(1-\sqrt{\nu/\kappa})\sqrt{\nu}},
$$
and the last expression is precisely $\left(\sigma_\nu/\varrho\right)^3$. We conclude that Taylor
expansion of $G(z)$ near $\varrho$ is
\begin{equation}
\label{eq_Taylor}
 G(z)=G(\varrho)+ \left(\frac{\sigma_v}{\varrho}\right)^3 \cdot \frac{(z-\varrho)^3}{3} +
 o(z-\varrho)^3.
\end{equation}
 This decomposition implies that near
$\varrho$ there are $6$ branches of level lines $\Re(G(z))=\Re(G(\varrho))$ departing $\varrho$ at
$6$ directions with angles $2\pi/3$ between adjacent ones. A sketch of them is shown in Figure
\ref{Figure_Contours}. Let us explain why the picture looks as shown. Note that the desired level
lines are smooth curves which cannot intersect each other, since any point of intersection would
have to be a critical point of $G(z)$. Also due to the maximum principle for harmonic functions,
any closed loop formed by the level lines should enclose $0$, $-\tau$ or $-\tau/\kappa$, which are
the only points where $\Re(G(z))$ is not harmonic. Further we can trace the signs of
$\Re(G(z))-\Re(G(\varrho))$ along the real axis. Simple considerations imply that there exist
points $d_1$, $d_2$ and $d_3$ such that $d_1<-\tau/\kappa<d_2<-\tau<d_3<0$ and
$\Re(G(z))-\Re(G(\varrho))>0$ changes the signs at these points. Namely,
$\Re(G(z))-\Re(G(\varrho))>0$ on $(-\infty,d_1)$; $\Re(G(z))-\Re(G(\varrho))<0$ on $(d_1,d_2)$;
$\Re(G(z))-\Re(G(\varrho))>0$ on $(d_2,d_3)$; $\Re(G(z))-\Re(G(\varrho))<0$ on $(d_3,\varrho)$;
and $\Re(G(z))-\Re(G(\varrho))>0$ on $(\varrho,+\infty)$. In Figure \ref{Figure_Contours} the
points $d_1$, $d_2$, $d_3$ are (negative) intersections of dashed lines with the real axis.
Finally, we claim that for any large value $M\gg 1$, the equation $\Re(G(z))=\Re(G(\varrho))$ has
no solutions on the circle $|z|=M$. Indeed, for large $z$,
\begin{multline*}
\Re(G(z))= \ln\big|z \kappa \tau^{-1}(1+z^{-1} \kappa^{-1} \tau)\big| - \nu \ln\big|z
\tau^{-1}(1+z^{-1}\tau)\big| + m_{\nu} \ln|z|
\\
=\ln|z|(1-\nu+m_\nu)+\ln(\kappa)-\ln(\tau)+\nu \ln(\tau)+o(1),
\end{multline*}
and $1-\nu+m_\nu>0$ for all $\kappa<\nu<\kappa^{-1}$.

\begin{figure}[h]
\begin{center}
 {\scalebox{1}{\includegraphics{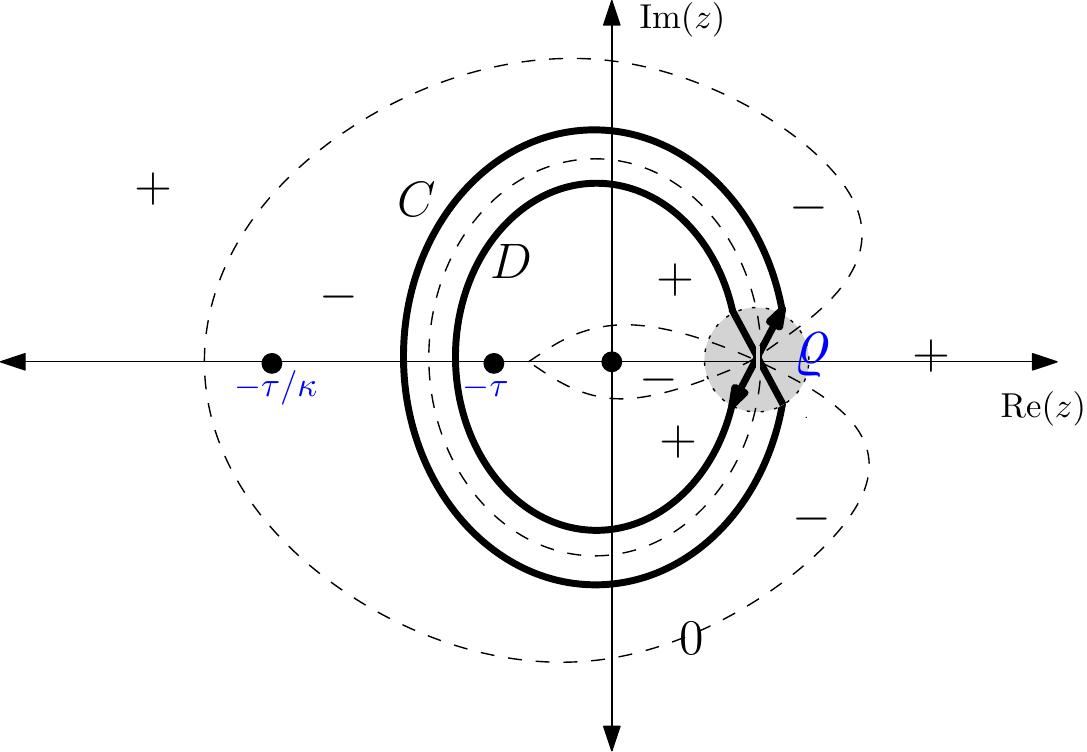}}}
 \caption{Zero contours for $\Re(G(z)) - \Re(G(\varrho))$
  are depicted by dashed lines (with the number 0 marked along them).
  Between these dashed lines lie areas of positive and negative real part, indicated by the occurrence of plus and minus signs.
  The contours $C$ and $D$ are depicted by solid lines.
 The region of the contours around the $\varrho$ are blown up on Figure \ref{Figure_Contours2} the right and coincide locally
with the limiting contours $\tilde{C}$ and $\tilde{D}$.
 \label{Figure_Contours}}
\end{center}
\end{figure}

We can now conclude about the features of the level lines (solutions) $\Re(G(z))=\Re(G(\varrho))$.
Namely, the level lines form $3$ closed loops as shown in Figure \ref{Figure_Contours}: all the
loops pass through $\varrho$, their second points of intersection with real axis lie on the
intervals $(-\infty,-\tau/\kappa)$, $(-\tau/\kappa,-\tau)$ and $(-\tau,0)$, respectively. The sign
of $\Re(G(z))-\Re(G(\varrho)$ alternates over the domains bounded by level lines, as shown in
Figure \ref{Figure_Contours}.

Finally, we claim that all the loops are ``star--shaped'', which means for each angle $\phi$ each
loop has precisely one point $z$ satisfying $Arg(z)=\phi$. To prove that, we fix $z_0=x_0+\ii y_0$
with $Arg(z_0)=\phi$ (we can assume that $0<\phi<\pi$ and  the case of real $z$ was studied
before) and consider the function of real variable $q$ given by
$$
\Re\bigl(G(z_0 q)\bigr)= \ln\big|1+ z_0 q \kappa \tau^{-1}\big| - \nu \ln\big|1+ z_0 q
\tau^{-1}\big| + m_{\nu} \ln|z_0 q|.
$$
Note that we allow $q$ to be negative here. We have
$$
\frac{\partial}{\partial q} \Re\bigl(G(z_0 q)\bigr)= \Re\left(\frac{z_0 \kappa\tau^{-1}}{1+ z_0 q
\kappa \tau^{-1}} - \frac{\nu z_0 \tau^{-1}}{1+ z_0 q \tau^{-1}} + \frac{m_{\nu}}{q} \right)
$$
The last formula implies that the roots of $\frac{\partial}{\partial q} \Re(G(z_0 q)=0$ are the
roots of a degree $4$ polynomial (in $q$) and, thus, there are at most $4$ of them. Taking into
the account the singularity at $0$ we conclude that for each $W\in\mathbb R$ the equation
$\Re(G(z_0 q))=W$ has at most $6$ solutions: Indeed, there are $m$ positive solutions and $n$
negative solutions, between each solution a root of the derivative should exist, thus,
$(m-1)+(n-1)\le 4$.

 Now if all the loops of $\Re(G(z))=\Re(G(\varrho))$ are star-shaped, then this already
gives precisely $6$ solutions; if one of the loops were not star--shaped, then we would have had
more solutions, which is impossible.

\bigskip

After having established the validity of Figure \ref{Figure_Contours} we proceed to the contour
deformations. We deform the $w$--contour and $v$--contour to curves $C$ and $D$, shown in Figure
\ref{Figure_Contours}, respectively. The $w$--contour $C$ goes through the critical point
$\varrho$ and departs it at angles $\pm \pi/3$ (oriented with increasing imaginary part).
Likewise, the $v$ contour $D$ goes through $\varrho - \varrho \sigma_{\nu}^{-1} L^{-1/3}$ and
departs at angles $\pm 2\pi/3$ (oriented with decreasing imaginary part -- as is a consequence of
the change of variables). The reason for the shift in the location of the $v$ contour is to avoid
the pole from the denominator $v-w'$. Outside a small neighborhood of $\varrho$ both contours
closely follow one of the loops of $\Re(G(z))=\Re(G(\varrho))$ --- the one which is between them
and then crosses the negative real axis between $-\tau/\kappa$ and $-\tau$. The $C$ contour stays
outside this loop, i.e.\ $\Re(G(w))-\Re(G(\varrho))<0$ along it (for $z\ne\varrho$), while the $D$
contour is inside the loop and $\Re(G(v))-\Re(G(\varrho))>0$ along it (outside a small
neighborhood of $\varrho$).

As a consequence, on the new contours $C$ and $D$, for any $\eps>0$ there exists $\delta>0$ such
that as long as either $w$ or $v$ is outside $\eps$--neighborhood of $\varrho$, we have
$\Re(G(w))-\Re(G(v))<\delta$. Therefore, the integrand in \eqref{eq_kernel_plugged_3} would decay
exponentially fast as $L\to\infty$. It follows (see Section \ref{section_proof_2}) that
asymptotically only $w$, $v$ in a small neighborhood of $\varrho$ influence the desired Fredholm
determinant.

\begin{figure}[h]
\begin{center}
 {\scalebox{1}{\includegraphics{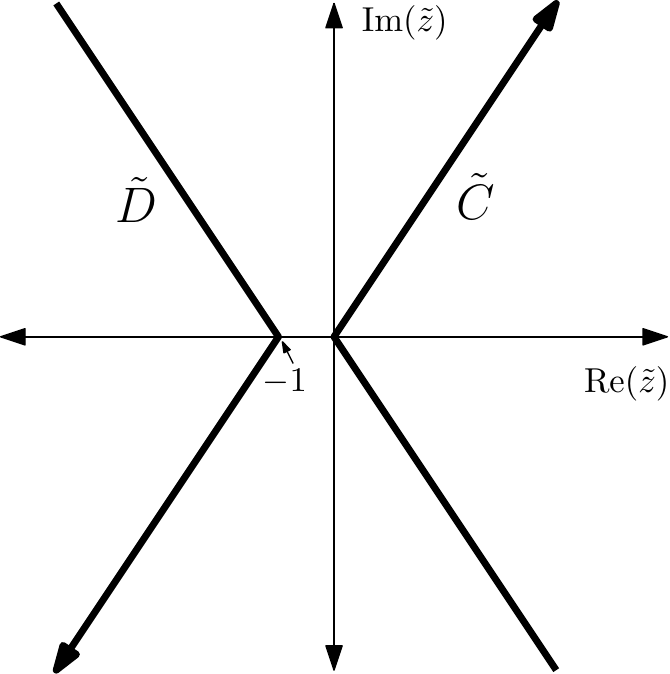}}}
 \caption{The contours $\tilde{C}$ and $\tilde{D}$ in $\tilde z$ plane  obtained from $C$ and $D$ in $z$ plane
 after the change of variables $z=\varrho + L^{-1/3} \varrho \sigma_\nu^{-1} \tilde z$. The variable
 $\tilde w$ is integrated over $\widetilde C$ and $\tilde v$
 is integrated over $\widetilde D$.}
 \label{Figure_Contours2}
\end{center}
\end{figure}

In a small neighborhood of $\varrho$ we make a change of variables $w=\varrho + L^{-1/3} \varrho
\sigma_\nu^{-1} \tilde w$, $v=\varrho + L^{-1/3} \varrho \sigma_\nu^{-1} \tilde v$. The contours
arising after this change of variables are shown in Figure \ref{Figure_Contours2}. Note that we
have integration both in $v$ (in \eqref{eq_kernel_plugged_3}) and in $w$ (in the definition of
Fredholm determinant), this means that we should also absorb in the kernel $K$ the factor
$L^{-2/3} \varrho^2\sigma_\nu^{-2}$ arising from the Jacobian of the change of coordinates. As a
result, using the Taylor expansion \eqref{eq_Taylor} and the expansion
$\ln(\varrho+z)=\ln(\varrho)+z/\varrho+o(z)$, the kernel in \eqref{eq_kernel_plugged_3} transforms
into
\begin{equation}
\label{eq_kernel_plugged_4}
 \tilde K^{b_1,b_2}_{\zeta(L)} (\tilde w,\tilde w')=\frac{L^{-1/3}
\varrho \sigma_\nu^{-1}}{\ln(\tau) 2\ii} \sum_{k=-\infty}^{\infty} \int_{\widetilde D}
 \frac{ \exp\left[ \frac{\tilde w^3}{3}-\frac{\tilde v^3}{3} + h(\tilde v-\tilde w) + o(1)\right]
 \cdot  d\tilde v}{(\tilde v-\tilde w')\sin\left(\frac{\pi}{\ln(\tau)} (L^{-1/3} \sigma_\nu^{-1}(\tilde v-\tilde w)
 +2\pi\ii k)\right) (\varrho + o(1)) }.
\end{equation}
When $k\ne 0$,
$$
 \lim_{L\to\infty} \sin\left(\frac{\pi}{\ln(\tau)} (L^{-1/3} \sigma_\nu^{-1}(\tilde v-\tilde w)
 +2\pi\ii k)\right)=\sin\left(\frac{\pi}{\ln(\tau)}2\pi\ii k\right)\ne 0,
$$
and the corresponding term in the summation of $k$ vanishes due to the $L^{-1/3}$ prefactor. On
the other hand, when $k=0$,  as $L\to\infty$
$$
 \sin\left(\frac{\pi}{\ln(\tau)} (L^{-1/3}
\sigma_\nu^{-1}(\tilde v-\tilde w)
 )\right)\approx \frac{\pi}{\ln(\tau)} L^{-1/3}
\sigma_\nu^{-1}(\tilde v-\tilde w)
$$
Plugging this  into \eqref{eq_kernel_plugged_4}, we conclude that
\begin{equation}
\label{eq_kernel_plugged_5} \lim_{L\to\infty} \tilde K^{b_1,b_2}_{\zeta(L)} (\tilde w,\tilde
w')=\frac{1}{2\pi\ii} \int_{\widetilde D}
 \frac{ \exp\left[ \frac{\tilde w^3}{3}-\frac{\tilde v^3}{3} + h(\tilde v-\tilde w) \right]
 \cdot  d\tilde v}{(\tilde v-\tilde w') (\tilde v-\tilde w)
  }.
\end{equation}
Denoting the right--side of \eqref{eq_kernel_plugged_5} as $K^{Ai}$, we conclude that
\begin{equation}
\label{eq_final_convergence}
  \lim_{L\to\infty}
  \det\left(1+K^{b_1,b_2}_{\zeta(L)}\right)_{C_r}=\det\left(1+K^{Ai}\right)_{\widetilde C}.
\end{equation}
The last determinant is (upon a change of variables $\tilde v\to -\tilde v$, $\tilde w\to-\tilde
w$, $\tilde w' \to -\tilde w'$) a standard expression for $F_{GUE}(h)$, cf.\ \cite{TW_determ},
\cite[Lemma 8.6]{BCF}.

\subsection{On the estimates}
\label{section_proof_2}

In the argument of Section \ref{section_proof_1} we were dealing with leading terms of the
asymptotics without making the estimates for the remainders. All such estimates are fairly
standard, let us only point where they are required and where analogous estimates can be found in
the literature.

\begin{enumerate}
 \item In order to justify formula \eqref{eq_kernel_plugged_4} and the following pointwise limit in it,
 we need to estimate the integral
 outside a small neighborhood of the critical point $\varrho$. This is a usual estimate of
 steepest descent method of the analysis of integrals, cf.\ \cite{Copson}, \cite{Er}. In the
 related context of directed polymers in random media a very similar justifications were done recently in
 \cite[Section 5.2]{BCF}, \cite[Section 2]{BCR}.

 \item The formula \eqref{eq_kernel_plugged_5} leads to the termwise limit for the Fredholm
 determinant of Proposition \ref{Proposition_analytical_converence}. To justify the $L\to\infty$
 limit for the sums, i.e.\ equality \eqref{eq_final_convergence} we need also certain uniform
 estimates for the remainder of the series (large $n$ terms of Definition \ref{Def_fredholm}).
 Using Hadamard's inequality this readily follows from our limit analysis of the kernel
 $K^{b_1,b_2}_{\zeta(L)}$ and we again refer to \cite[Section 5.2]{BCF}, \cite[Section 2]{BCR} and
 references therein for additional details.
\end{enumerate}

\section{Proofs of Theorem \ref{Theorem_LLN} and Theorem \ref{Theorem_fluctuations}}

Due to the identification between the the configurations of the six--vertex model and interacting
particle system $\x^{b_1,b_2}(t)$ explained in Section \ref{Section_P_as_interacting},
$H(x,y;\omega)$ of Theorem \ref{Theorem_fluctuations} is the same as $N_x(t)$ with $t=y$ of
Theorem \ref{Theorem_asymptotics}. Thus, these theorems are equivalent, and passing from one to
another is a matter of changing the notations.

Further, Theorem \ref{Theorem_fluctuations} readily implies Theorem \ref{Theorem_LLN} for $x,y$
satisfying $\frac{1-b_1}{1-b_2}<\frac{x}{y}<\frac{1-b_2}{1-b_1}$.

Let us study the remaining $x,y$. We start from $x\le \frac{1-b_1}{1-b_2} y$. Choose any $\eps>0$
and write
$$
 \limsup_{L\to\infty} \frac{H(Lx,Ly;\omega)}{L}\le  \limsup_{L\to\infty} \frac{H(L(\frac{1-b_1}{1-b_2} y+\eps),Ly;\omega)}{L}=
 \H\left(\frac{1-b_1}{1-b_2} y+\eps,y\right).
$$
The definition of $\H$ implies that for any $y$,
$$
 \lim_{\eps\to 0}  \H\left(\frac{1-b_1}{1-b_2} y+\eps,y\right)=0.
$$
Since $H(Lx,Ly;\omega)\ge 0$ we conclude that for $x,y$ satisfying  $\frac{x}{y}\le
\frac{1-b_1}{1-b_2}$ we have
$$
 \lim_{L\to\infty} \frac{H(Lx,Ly;\omega)}{L}=0.
$$
It remains to consider the case $x\ge \frac{1-b_2}{1-b_1} y$. For this note that for any  $x$,
$y>0$ we have (almost surely)
$$
 H(x,y;\omega)\ge L(x-y) -2.
$$
To prove this inequality observe that $H(x,0;\omega)=\lfloor x\rfloor$ and when we increase $y$ by
$1$ the height function decreases at most by $1$. Therefore,
\begin{equation}
\label{eq_x8}
 \liminf_{L\to\infty} \frac{H(Lx,Ly;\omega)}{L}\ge x-y.
\end{equation}

On the other hand, for $x\ge \frac{1-b_2}{1-b_1} y$ and any $\eps>0$ we have (again using the fact
that $H$ does not change by more than one as we move by one unit along the grid)
\begin{equation}
\label{eq_x9}
 H(Lx,Ly;\omega)\le H\left(L  \left(\frac{1-b_2}{1-b_1} y-\eps\right), Ly;\omega\right) + L\left(x- \frac{1-b_2}{1-b_1} y \right) +1.
\end{equation}
Using the definition of the limit $\H(x,y)$ we see that for any $y$
\begin{equation}
\label{eq_x10}
 \lim_{\eps\to 0} \H\left(\frac{1-b_2}{1-b_1} y-\eps, y\right)=  \frac{1-b_2}{1-b_1} y-y.
\end{equation}
Therefore, sending $L\to\infty$ in \eqref{eq_x9} we conclude that (in probability)
\begin{equation}
\label{eq_x11}
 \limsup_{L\to\infty} \frac{H(Lx,Ly;\omega)}{L}\le\left( \frac{1-b_2}{1-b_1} y-y\right)+ \left(x- \frac{1-b_2}{1-b_1}
 y\right)=x-y.
\end{equation}
Combining \eqref{eq_x8} and \eqref{eq_x11} we conclude that for $x\ge \frac{1-b_2}{1-b_1} y$,
$$
 \lim_{L\to\infty} \frac{H(Lx,Ly;\omega)}{L}=x-y,
$$
which finishes the proof of Theorem \ref{Theorem_LLN}.

\end{document}